\documentclass[oneside,12pt]{amsart}
\usepackage[T1]{fontenc}
\usepackage{graphicx} 

\usepackage{soul}

\usepackage{tabularx} 
\usepackage{longtable}
\usepackage{array}

\usepackage[normalem]{ulem}

\usepackage[margin=1.00in]{geometry}

\usepackage{amssymb,amsmath,amsthm}
\theoremstyle{definition}
\usepackage{tcolorbox}

\newcommand{\R}{\mathbb{R}} 
\newcommand{\N}{\mathbb{N}} 

\newcommand{\la}{\lambda}

\newcommand{\lp}{\left(}
\newcommand{\rp}{\right)}
\newcommand{\lb}{\left[}
\newcommand{\rb}{\right]}

\DeclareMathOperator{\conv}{conv}

\usepackage{enumitem}
\usepackage{mathtools}
\usepackage{commath}
\usepackage{mathrsfs}
\usepackage{xfrac} 

\usepackage{comment}

\newenvironment{example}[1]{\vspace{.1in}\noindent\textbf{Example #1 }}{}

\newtheorem{theorem}{Theorem}[section]
\newtheorem{corollary}{Corollary}[theorem]
\newtheorem{lemma}[theorem]{Lemma}

\newtheorem{definition}[theorem]{Definition}
\newtheorem{proposition}[theorem]{Proposition}

\newtheorem{claim}[theorem]{Claim}

\newtheorem{remark}[theorem]{Remark}

\DeclareMathOperator{\dist}{dist}
\DeclareMathOperator{\rad}{rad}

\usepackage{hyperref}
\hypersetup{
    colorlinks,
    citecolor=black,
    filecolor=black,
    linkcolor=black,
    urlcolor=cyan
}

\title[Arithmetic Progressions and triangles ]
{Triangles in the Plane and arithmetic progressions in thick compact subsets of $\R^d$}

\author{Samantha Sandberg-Clark and Krystal Taylor}
\address{Samantha Sandberg-Clark, Department of Mathematics, The Ohio State University}
\email{sandberg-clark.1@osu.edu}
\thanks{}
\address{Krystal Taylor, Department of Mathematics, The Ohio State University}
\email{taylor.2952@osu.edu}
\thanks{Taylor is supported in part by the Simons Foundation Grant 523555.}

\subjclass[2010]{25, 28A80, 28A75, 28A78}
\keywords{arithmetic progression, point configurations, Newhouse thickness, Hausdorff dimension, Cantor sets}
\date{}

\begin{document}

\begin{abstract}

This article focuses on the occurrence of 3-point configurations 
in subsets of $\R^d$ of sufficient thickness.
We prove that a compact set 
$A\subset \R^d$ contains a similar copy of any 
linear $3$-point configuration (such as a $3$-point arithmetic progression)
provided 
$A$ satisfies a mild Yavicoli-thickness condition and an $r$-uniformity condition for $d\geq 2$; or, when $d=1$, 
the result holds 
provided the Newhouse thickness of $A$ is at least $1$. 

Moreover, we prove that compact sets $A\subset \R^2$ contain the vertices of an equilateral triangle (and more generally, the vertices of a similar copy of any given triangle) provided $A$ satisfies a mild Yavicoli-thickness condition and an $r$-uniformity condition. 
Further,
$C\times C$ contains the vertices of an equilateral triangle (and more generally the vertices of a similar copy of any given 3-point configuration) provided the Newhouse thickness of $C$ is at least $1$.
These are among the first results in the literature to give explicit criteria for the occurrence of 3-point configurations in the plane.
\end{abstract}

\maketitle

\setcounter{tocdepth}{1}
\tableofcontents

\section{Introduction}
An active area of research focuses on identifying minimal size conditions on a set that ensure it contains a similar copy of a given finite point configuration.
Size
may refer to positive upper density, positive Lebesgue measure, sufficient Hausdorff dimension, or to some other notion of magnitude. Finite point configurations include arithmetic progressions, simplexes, chains, trees, and more general graphs. 
\vskip.125in

In this article, we focus on the occurrence of arithmetic progressions and triangles in compact subsets of $\R^d$
for $d\geq 1$. 
We say that a point configuration $P=(v^i)_{i=1}^k$ is \textit{realized} in a set $A$ if $A$ contains a similar copy of $P$.
Further, we say that $P$ is \textit{stably realized} in $A$ if the set of $t$ for which $A$ contains a rotated and translated copy of $tP$ has nonempty interior. 
\vskip.125in

Arithmetic progressions have been investigated in a variety of context.
Szemer\'edi \cite{szemeredi} famously showed that any subset of the natural numbers with positive upper density contains arbitrarily long arithmetic progressions.
%
%
A higher dimensional variant was first obtained by Furstenberg and Katznelson \cite{FK78}; see also \cite{Bour85, LyallMag18, Ziegler_2006} for further developments.
\vskip.125in
%


In another direction, 
it is a consequence of the Lebesgue density theorem that
any finite point configuration is stably realized in any subset of $\R^d$ of positive Lebesgue measure. 
In particular, arithmetic progressions of arbitrary finite length are stably realized in sets of positive measure. 
One must exercise some caution though.  
 Bourgain \cite{Bour85} showed that any subset of $\R^3$ with positive upper density contains the vertices of an equilateral triangle, as well as all sufficiently \textit{large} scalings. 
This result, however, does apply to sufficiently large scalings of arithmetic progressions (see a counter example in \cite[example b]{Bour85}).
\vskip.125in

A topological variant, provided by the second listed author and Mcdonald \cite{McDT25}, 
states that if $B$ is a second category Baire space in $\R^d$ (or in any topological vector space $V$), and $P \subset V$ is a countable bounded sequence, 
then $P$ is stably realized in $B$. 
In particular, $B$ contains arbitrarily long arithmetic progressions and all sufficiently \textit{small} scalings. 
\vskip.125in

A finer notion of size, Hausdorff dimension, is central in harmonic analysis and geometric measure theory.
The celebrated Falconer distance problems asks how large the Hausdorff dimension of a compact set $E\subset \R^d$, $d\geq 2$, needs to be to guarantee that its distance set 
$$\Delta(E) = \{|x-y|:x,y\in E\}$$ has positive Lebesgue measure.
Falconer demonstrated that $\dim_{\rm H}(E) >\frac{d}{2}$ is necessary and $\dim_{\rm H}(E) >\frac{d+1}{2}$ suffices \cite{Fal85}; for more recent developments, see the introduction and references in \cite{BFOP}. 
Resolving this gap - namely, establishing that the threshold $\frac{d}{2}$ suffices in Falconer’s conjecture - is a major open problem in harmonic analysis and geometric measure theory. 
\vskip.125in

Hausdorff dimension has further been used by a number of authors in analyzing the occurrence and abundance of finite point configurations. 
This includes, for instance, results on the \textit{interior} of distance sets associated to
chains \cite{BIT}, trees \cite{BFOP, IT19, GIT24}, necklaces \cite{GIMnecklaces},
and triangles \cite{GIT, Palsson_Romero2023}; 
results on the \textit{Lebesgue measure} of distance sets associated to chains \cite{OT} and triangles \cite{GITtri}; 
conditions for 
the occurrence of equilateral triangles in $\R^3$
\cite{Iosevich_Magyar2023} and 
the occurrence of arithmetic progressions (with additional assumptions involving Fourier dimension and supported measures) \cite{pramanik_chan_laba}. 
\vskip.125in

Hausdorff dimension alone, however, is not enough to guarantee the existence of arithmetic progressions in subsets of $\R^d$. 
Keleti \cite{keleti2008construction} showed that given any distinct set $\{x,y,z\}\subset\R$ there exists a compact set in $\R$ of Hausdorff dimension $1$ which does not contain any similar copy of $\{x,y,z\}$. 
M\'ath\'e
demonstrated that full Hausdorff dimension is not enough in any dimension to guarantee the occurrence of $3$-point arithmetic progressions (this follows from considering the zeros of the polynomial $P(x_1,x_2,x_3)= x_1-2x_2+x_3$ in Theorem 2.3 of \cite{mathe}).  
Hence, even if a set has full Hausdorff dimension, it may not contain a $3$-term arithmetic progressions. 
 \vskip.125in

Another important 3-point configuration is a triangle.
Depending on the ambient dimension, Hausdorff dimension is sometimes enough to guarantee the realization of similar triangles. 
Given any $3$-point set,
constructions due to Falconer \cite{Falc_tri} and Maga \cite{Maga2011FullDS} show that
there exists a set of full Hausdorff dimension in the plane that does not contain any similar copy. 
The situation is better, however, for triangles in Legesbue null sets in dimension three. 
Iosevich and Magyar \cite{Iosevich_Magyar2023} prove that there exists a a dimensional threshold $s< 3$ so that if $E\subset \R^3$ with $\dim_H(E) >s$, then $E$ contains the vertices of a simplex $V$ for any non-degenerate $3$-simplex $V$ satisfying a volume condition. They also prove a more general result for $k$-simplices. Note the non-degeneracy assumption precludes arithmetic progressions. 
 \vskip.125in

One might hope that Hausdorff dimension combined with some other conditions on structure or size may be enough to guarantee the occurrence of arithmetic progressions.   
 {\L}aba and Pramanik \cite{laba_2009} 
 provide sufficient conditions based on Fourier decay and power mass decay for a closed set $E\subset \R$ to 
 contain a non-trivial $3$-term arithmetic progression; 
see also \cite{pramanik_chan_laba} for a generalization to higher dimensions and more general patterns. 
%
However, even sets in $\R$ with both maximal Fourier and Hausdorff dimension need not contain $3$-APs.
Shmerkin \cite{Shmerkin_2016} demonstrated the dependence of the results in \cite{laba_2009} on the choice of constants by constructing Salem sets (sets of full Fourier dimension) 
that contain no arithmetic progressions.
\vskip.125in

The results of this section inform us that an alternative notion of size other than Hausdorff dimension is required to guarantee the existence arithmetic progressions in $\R^d$, as well as triangles in the plane. 
\vskip.125in


In contrast, the results of the current paper are exciting
in that they provide new insights for $3$-point configurations, including arithmetic progressions, in subsets of the plane (see \S2). With this, we turn to Newhouse thickness. 
\vskip.125in

\subsection{Newhouse thickness}
In the 1970s, Newhouse introduced a notion of size known as \textit{thickness} for compact subsets of the real line.  His clever \textit{Gap Lemma}
gives conditions based on thickness that guarantee that a pair of compact sets intersect. Newhouse's original motivation was the study of bifurcation theory in dynamical systems \cite{newhouse}. 
Since then, thickness has been used extensively in the fields of dynamical systems and fractal geometry, and even in numerical problem solving
\cite{Astels, BoonePalsson, hunt_kan_yorke, jung-lai-interior-of-certain, McDonald_Taylor_2023_infinite_chains, simon_taylor, yavicoli_patterns, HuYav, Yu},
and higher dimensional notions of thickness have been introduced 
\cite{biebler_complex_gap_lemma, falconer2022intersections, Yavicoli_Gap_Lemma_Rd}.
\vskip.125in

Newhouse thickness is a natural notion of size for compact sets. 
The complement of every compact set $C$ in $\R$ is a countable union of open intervals. 
Discarding the two unbounded open intervals, we are left with a countable union of bounded, open intervals which we call gaps $(G_n)$. 
Without loss of generality, order the gaps by nonincreasing size. 
We can then construct $C$ by removing, in order, the gaps $(G_n)$ from the convex hull of $C$, denoted by $\conv(C)$.
Observe that every time a gap $G_n$ is removed, two intervals, one to the left of the gap, $L_n$, and one to the right of the gap, $R_n$ (we call these bridges). 
Newhouse thickness is computed by considering the ratios of the lengths of the bridges to the lengths of the gaps
\cite{newhouse, yavicoli_patterns}.
\vskip.125in

\begin{definition}
    Let $C\subset\R$ be a compact set with convex hull $I$, and let $(G_n)$ be the open intervals making up $I\setminus C$, ordered in decreasing length.
    Each gap $G_n$ is removed from a closed interval $I_n$, leaving behind two closed intervals $L_n$ and $R_n$; the left and right pieces of $I_n\setminus G_n$. 
    The Newhouse thickness of $C$ is defined by 
    \begin{equation*}
        \tau\lp C\rp:= \inf_{n\in\N} \frac{\min\left\{\abs{L_n},\abs{R_n}\right\}}{\abs{G_n}}.
    \end{equation*}
\end{definition}
\vskip.125in

\noindent
\textbf{Example} (A classic set with thickness 1).
 The middle-third Cantor set has thickness equal to $1$. This set is constructed by removing the middle-third of the interval $\abs{G_n}=\frac{1}{3^n}$.  At each stage, this process leaves left and right intervals of length $\abs{L_n}=\abs{R_n}=\frac{1}{3^n}$.
\vskip.125in

The key fact on which the results of this paper are based is that sets of sufficient Newhouse thickness contain arithmetic progressions. The following is from  \cite[Proposition 20]{Yavicoli_Survey}. 
\begin{proposition}[Yavicoli \cite{Yavicoli_Survey}]
\label{prop:3AP in R}
    Let $C\subset\R$ be a compact set with $\tau(C)\geq 1$. 
    Then $C$ contains an arithmetic progression of length $3$.
\end{proposition}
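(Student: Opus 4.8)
The plan is to reduce the statement to an intersection problem and invoke Newhouse's Gap Lemma. After an affine normalization, which preserves both Newhouse thickness and the property of containing a $3$-AP, I may assume $\conv(C) = [0,1]$, so that $0,1\in C$; I may also assume $C$ is not an interval (otherwise the conclusion is immediate) and that $C$ has at least two points, so that a genuine, i.e. three-distinct-points, progression is what is at issue. Then $C$ has at least one gap; let $G=(\alpha,\beta)$ be one of maximal length. Since $G$ is a longest gap it is removed first from $\conv(C)=[0,1]$ in the bridge--gap construction, so its two bridges are exactly $[0,\alpha]$ and $[\beta,1]$, and $\tau(C)\ge 1$ forces
\[
 \alpha \ge \beta-\alpha \qquad\text{and}\qquad 1-\beta \ge \beta-\alpha,
\]
hence $0<\alpha<\beta<1$ and $\beta\le 2\alpha$. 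If in fact $\beta=2\alpha$ then $\{0,\alpha,2\alpha\}\subseteq C$ is already a $3$-AP, so I may assume $\beta<2\alpha$.

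Next I would pass to the left piece $C_L:=C\cap[0,\alpha]$, which has $\conv(C_L)=[0,\alpha]$ and, by the standard fact that slicing a thick set at one of its gaps does not decrease thickness, satisfies $\tau(C_L)\ge\tau(C)\ge 1$. The idea is to hunt for a $3$-AP of the form $\{\,2y-\beta,\ y,\ \beta\,\}$ with $y\in C_L$: since $y\le\alpha<\beta$, such a triple consists of three distinct points of $C$ in arithmetic progression with common difference $\beta-y>0$, provided only that $2y-\beta\in C$. Introduce the compact set $D:=\tfrac12(C_L+\beta)=\{\tfrac{c+\beta}{2}:c\in C_L\}$, an affine copy of $C_L$, so that $\tau(D)=\tau(C_L)\ge 1$ and $\conv(D)=[\tfrac{\beta}{2},\tfrac{\alpha+\beta}{2}]$. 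The requirement ``$y\in C_L$ with $2y-\beta\in C$'' is exactly ``$y\in C_L\cap D$''. The point of this particular choice is that $\beta\notin[0,\alpha]=\conv(C_L)$, so $C_L\cap D$ cannot contain the trivial solution $y=\beta$; hence \emph{any} point of $C_L\cap D$ already delivers a bona fide $3$-AP, and it remains only to prove $C_L\cap D\ne\emptyset$.

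I would obtain this from the Gap Lemma: two compact subsets of $\R$ whose thicknesses multiply to at least $1$ and which are linked --- overlapping convex hulls, with neither contained in a gap of the other --- must intersect. Here $\tau(C_L)\tau(D)\ge 1$; the convex hulls meet, since $[0,\alpha]\cap[\tfrac{\beta}{2},\tfrac{\alpha+\beta}{2}]=[\tfrac{\beta}{2},\alpha]$ is a nondegenerate interval because $\beta<2\alpha$; and $C_L$ is not contained in a gap of $D$, because every gap of $D$ has length at most $\tfrac12(\beta-\alpha)<\alpha=|\conv(C_L)|$. The one remaining condition --- that $D$ is not contained in a gap of $C_L$ --- is the step I expect to be the main obstacle, and it is precisely where the thickness of $C_L$ must be used in full. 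If $D$ lay inside a gap $(a_1,a_2)$ of $C_L$, then $a_1<\tfrac{\beta}{2}$ and $a_2\ge\tfrac{\alpha+\beta}{2}$; but the right bridge of this gap inside $C_L$ has length at least $a_2-a_1$ (by $\tau(C_L)\ge 1$) and lies in $[a_2,\alpha]$, so $\alpha-a_2\ge a_2-a_1$, i.e. $\alpha+a_1\ge 2a_2\ge\alpha+\beta$, forcing $a_1\ge\beta$ --- contradicting $a_1<\tfrac{\beta}{2}$. Thus the Gap Lemma applies and produces $y\in C_L\cap D$, which yields the desired $3$-term arithmetic progression.

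The only genuinely delicate issues I anticipate are bookkeeping ones: handling ties among the longest gaps of $C$ (one picks any one of them; all the inequalities above remain available), verifying carefully that $\tau(C_L)\ge\tau(C)$ and that the gaps and bridges of $C_L$ are inherited from those of $C$, and confirming that the version of the Gap Lemma being cited is valid for the non-strict bound $\tau(C_L)\tau(D)\ge 1$ --- which it is here, since the convex hulls overlap in an interval rather than merely at a single point. Degenerate sub-cases, such as $C_L$ or $D$ being an interval, only make the conclusion easier, as then $C_L\cap D$ contains the entire overlap $[\tfrac{\beta}{2},\alpha]$.
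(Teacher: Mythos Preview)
Your argument is correct. Both your proof and the paper's reduce to a single application of Newhouse's Gap Lemma after normalizing $\conv(C)=[0,1]$ and splitting at a longest gap $G=(\alpha,\beta)$, but the two proofs apply the lemma to different pairs of sets. The paper works with \emph{both} pieces $A=C\cap[0,\alpha]$ and $B=C\cap[\beta,1]$: it shows (via the Gap Lemma applied to $-\tfrac12 A$ and $\tfrac12 B - t$) that the sumset $\tfrac12(A+B)$ contains an interval, and then observes that this interval contains the point $\beta\in C$, yielding a progression $\{a,\beta,b\}$ with $a\in A,\ b\in B$. You instead use only the left piece $C_L=A$ together with the single anchor point $\beta$: you apply the Gap Lemma directly to $C_L$ and $D=\tfrac12(C_L+\beta)$, and any point of the intersection is already the middle term of a $3$-AP with right endpoint $\beta$. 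Your route is a bit more economical for the $\lambda=\tfrac12$ case (one piece instead of two, one Gap Lemma application with no auxiliary interval), while the paper's two-piece setup is what allows the argument to extend uniformly to arbitrary $\lambda\in(0,1)$, which is the paper's actual goal in Proposition~\ref{prop:assymetric progressions updated}.
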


This result says that compact subsets of the real line with thickness at least 1 contain a $3$-point arithmetic progression. 
Note that larger thickness is required for longer progressions; see Remarks \ref{BF} and \ref{off center}. 
\vskip.125in

We generalize Proposition \ref{prop:3AP in R} and provide an analogue in higher dimensions. 
The main results of this paper are as follows; we:
\begin{enumerate}
    \item 
prove a more general version of Proposition \ref{prop:3AP in R} for convex combinations in $\R$ (Proposition \ref{prop:assymetric progressions updated}); 
    \item 
    apply this to obtain similar copies of any triangle in Cartesian products in $\R^2$ (Theorem \ref{thm: triangles Cartesian});
    \item 
    prove a higher dimensional analogue 
that demonstrates the occurrence of arithmetic progressions and convex combinations in compact subsets of $\R^d$ 
(Theorem \ref{thm: higher dim convex combo}); 
\item 
obtain similar copies of any triangle in general compact sets in $\R^d$ (Theorem \ref{thm: triangles no Cartesian}). 
\end{enumerate}
\vskip.125in

\subsection{The Gap Lemma}
The main tool used to prove Proposition \ref{prop:3AP in R} is the Gap Lemma, 
which gives criteria 
for the intersection of two compact sets.   
Note that (ii) implies (i), but we list (i) for emphasis.

\begin{lemma}{(Newhouse's Gap Lemma \cite{newhouse})}
    Let $C^1$ and $C^2$ be two compact sets in the real line such that:
\begin{enumerate}[label=(\roman*)]
        \item $\conv(C^1)\cap \conv(C^2)\neq \emptyset$,
        \item neither set lies in a gap of the other set,
        \item $\tau(C^1)\tau(C^2)\geq 1$.
    \end{enumerate}
    Then, 
    \begin{equation*}
        C^1\cap C^2\neq \emptyset.
    \end{equation*}
\end{lemma}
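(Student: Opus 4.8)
The plan is to run Newhouse's original argument: under the three hypotheses I will construct a shrinking, interleaving sequence of intervals and read off a point common to $C^1$ and $C^2$ as its limit. Say that two closed bounded intervals are \emph{linked} if they intersect but neither contains the other, so that their endpoints strictly interleave. The basic objects of the construction are pairs $(B^1,B^2)$ in which $B^1$ is a bridge of $C^1$, $B^2$ is a bridge of $C^2$, and the two are linked; note that both endpoints of any bridge of $C^i$ lie in $C^i$, and that for a linked pair the interval $B^1\cap B^2$ has one endpoint in $C^1$ and the other in $C^2$.

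First I would produce an initial linked pair of bridges, and this is precisely where hypotheses (i) and (ii) are used. If $\conv(C^1)$ and $\conv(C^2)$ are themselves linked, take them (treating a convex hull as the top-level bridge). Otherwise (i) forces one of them, say $\conv(C^2)$, to be contained in the other, so the point $\min C^2\in C^2$ lies in $\conv(C^1)$: if it lies in $C^1$ we are already done, and if not it falls into some gap $G$ of $C^1$, whereupon (ii) (which says $C^2\not\subseteq G$) forces $C^2$ to cross $G$. Replacing $\conv(C^1)$ by the bridge of $C^1$ lying beyond $G$ and repeating this reduction finitely often yields a linked pair of bridges.

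The engine of the proof is a refinement step, and this is where hypothesis (iii) enters. Given a linked pair $(B^1,B^2)$, look at whichever endpoint $x$ of the two bridges protrudes into the interior of the other. If $x$ lies in both sets we have a point of $C^1\cap C^2$ and stop. Otherwise, say the endpoint $x$ of $B^1$ protrudes into $B^2$, so $x\in C^1$ but $x$ lies in a gap $U$ of $C^2$: the two bridges of $C^2$ abutting $U$ have length at least $\tau(C^2)\,|U|$, while $B^1$, being a bridge of $C^1$, overhangs $x$ by at least $\tau(C^1)$ times the length of the gap it bridges. The arithmetic of combining these bounds is governed exactly by $\tau(C^1)\tau(C^2)\ge 1$: it guarantees that the abutting bridge of $U$ on the correct side, paired with a suitable sub-bridge of $C^1$ near the corresponding endpoint of $U$, is again a linked pair whose intersection interval lies strictly inside $B^1\cap B^2$. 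The newly protruding endpoint may then land in a gap of $C^1$, and one repeats with the roles of the two sets exchanged; the construction zig-zags between them.

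Iterating, either the process halts at a point of $C^1\cap C^2$, or it produces an infinite nested sequence of linked pairs whose intersection intervals contract to a single point $x$. Since each such interval has one endpoint in $C^1$ and the other in $C^2$, and both sets are closed, passing to the limit gives $x\in C^1\cap C^2$; in all cases $C^1\cap C^2\neq\emptyset$. The hard part — indeed the entire content of the lemma — is the refinement step: carrying out the elementary but delicate case analysis of the positions of the two bridges relative to the gap, verifying that $\tau(C^1)\tau(C^2)\ge1$ is exactly the inequality allowing one to continue with a strictly smaller linked pair every time, and checking that the nested intersections genuinely shrink to a point rather than stabilizing on an interval.
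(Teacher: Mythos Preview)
The paper does not prove Newhouse's Gap Lemma; it is quoted from \cite{newhouse} as a black-box tool, so there is no paper proof to compare against. Your proposal is precisely Newhouse's classical nested-bridges argument (as in Palis--Takens), and the architecture is correct: produce a linked pair of bridges from hypotheses (i)--(ii), then iterate a refinement step governed by (iii), and extract a common point as the limit of the shrinking intersections.

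Two small caveats on the sketch itself. First, the claim that the initial reduction to a linked pair terminates ``finitely often'' is neither obviously true nor needed: the initial reduction and the refinement step are really the same process, and if it runs forever one still obtains a nested sequence of intervals with alternating $C^1$/$C^2$ endpoints, whose limit lies in both sets by closedness. Second, the phrase ``$B^1$ \ldots\ overhangs $x$ by at least $\tau(C^1)$ times the length of the gap it bridges'' is not the estimate that actually drives the refinement. What one uses is that the bridge $L$ of $C^2$ abutting the gap $U\ni x$ satisfies $|L|\ge\tau(C^2)|U|$; if $L\subset B^1$ then its far endpoint falls into a gap $V$ of $C^1$ (necessarily removed at a later stage, hence $V\subset B^1$), and the bridge of $V$ satisfies $|R|\ge\tau(C^1)|V|$; the product condition $\tau(C^1)\tau(C^2)\ge1$ is exactly what prevents both $L$ and $R$ from being swallowed simultaneously, forcing a new linked pair. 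You already flag this case analysis as ``the hard part,'' which is accurate.
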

\vskip.125in

The Gap Lemma is useful in the study of patterns as patterns and intersections are directly connected. A set $E\subset \R^d$ contains a homothetic copy of a $P=\{v^i\}_{i=1}^k$ if and only if there exists $t\neq 0$ so that 
$$ \bigcap_{i=1}^k \left( E- tv^i\right) \neq \emptyset.$$
\vskip.125in

The Gap Lemma
has played a role in the investigation of finite point configurations in a number of prior works.
Simon and Taylor \cite{simon_taylor} considered Cantor sets $K_1,K_2\subset\R$ satisfying $\tau(K_1)\cdot\tau(K_2)>1$, and showed that for any $x\in\R^2$, the pinned distance set
\begin{equation*}
    \Delta_x(K_1\times K_2) := \left\{\abs{x-y} : y\in K_1\times K_2\right\}
\end{equation*}
has non-empty interior.
This work was later extended by McDonald and Taylor in \cite{MCDONALD_TAYLOR_2023} where they proved that the distance set of a tree $T$ of $K_1\times K_2$, defined by 
\begin{equation*}
    \Delta_T(K_1\times K_2) = \left\{ ( |y^i-y^j|)_{i\sim j} : y^1,\cdots, y^{k+1}\in K_1\times K_2, y^i\neq y^j\right\},
\end{equation*}
has non-empty interior, where a tree is a finite acyclic graph.
They continued this work in \cite{McDonald_Taylor_2023_infinite_chains}, where infinite trees and constant gap trees were investigated. 
\vskip.125in 

Progress has been made toward developing and applying higher-dimensional gap lemmas.
 Boone and Palsson \cite{BoonePalsson} obtain higher dimensional chain results for thick set using a higher dimensional notion of thickness introduced by Falconer and Yavicoli. 
 Jung and Taylor \cite{Jung_Taylor} investigate chains and trees using the containment lemma (which serves as a substitute to the gap lemma) and the distance set results introduced in \cite{jung-lai-interior-of-certain}. 
\vskip.125in 

Yavicoli proved that compact sets in $\R^d$ generated by a restricted system of balls with significantly large thickness contain homothetic copies of finite sets \cite{Yavicoli_Gap_Lemma_Rd}. 
The current article offers an improvement to Yavicoli's result for the specific setting of 3-point configurations by lowering the thickness threshold. 
\vskip.125in

\subsection{Acknowledgment}
K.T. is supported in part by the Simons Foundation Grant GR137264. 
S.S. is a graduate teaching associate supported by the Ohio State mathematics department. 
The authors thank Alex McDonald for illuminating conversations in preparing this article.

\section{Main Results} 
 We investigate 3-point configurations in both $\R$ and in $\R^d$. 
Our first main results concern
3-point configurations on the real line. As an application, we demonstrate the existence 
of similar copies of any triangle in sets of the form $C\times C$ when $C\subset \R$ is compact and $\tau(C)\geq 1.$ These results appear in \S \ref{sec: Newhouse results} and rely on the Newhouse gap lemma as a primary tool.  

Our second main results concern the existence of arithmetic progressions and any other 3-point configuration in compact subsets of $\R^d$, including equilateral triangles.  
These results appear in \S \ref{sec: Yavi results} and rely on Yavicoli's notion of  thickness. 

\subsection{3-point configurations in $\R$ \,\,\&\,\, Triangles in the plane part I }\label{sec: Newhouse results}
First, we demonstrate the following more general version of Proposition \ref{prop:3AP in R}, which recovers the original result when $\lambda=\frac{1}{2}$ .

\begin{proposition}[Convex combinations in $\R$]\label{prop:assymetric progressions updated}
    Let $C\subset\R$ be a compact set with $\tau(C)\geq 1$.  Then for each $\lambda \in (0, 1)$, the set $C$ contains a nondegenerate $3$-term progression of the form 
    $$ \{a, (1-\lambda)a + \lambda b , b\}.$$ In other words, any 3-point subset of the line is realized in $C$. 
\end{proposition}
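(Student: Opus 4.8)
The plan is to extract the configuration from a single application of Newhouse's Gap Lemma, after two reductions. First, normalize so that $\conv(C)=[0,1]$. Since the set $\{a,\,(1-\lambda)a+\lambda b,\,b\}$ equals, as a \emph{set}, the set $\{b,\,(1-\mu)b+\mu a,\,a\}$ with $\mu=1-\lambda$, realizing the $\lambda$-configuration in $C$ is equivalent to realizing the $(1-\lambda)$-configuration; so we may assume $\lambda\in(0,\tfrac12]$. If $C$ is an interval the claim is trivial (take $a\ne b$ arbitrary; the middle point lies strictly between them, hence in $C$), so assume $C$ has a gap, and let $U=(\alpha,\beta)$ be a largest gap, ordered first among the maximal gaps. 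Then $U$ is removed first in the construction of $C$, so its bridges are $[0,\alpha]$ and $[\beta,1]$, and $\tau(C)\ge 1$ gives $\alpha\ge \beta-\alpha$ and $1-\beta\ge\beta-\alpha$.

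Next, reformulate the existence problem as an intersection. Fix $b=\beta\in C$ and look for $a$ in the left bridge with $(1-\lambda)a+\lambda\beta\in C$. Put $E:=\tfrac{1}{1-\lambda}(C-\lambda\beta)$, an increasing affine image of $C$, so that $\tau(E)=\tau(C)\ge 1$; the requirement $(1-\lambda)a+\lambda\beta\in C$ is then exactly the requirement $a\in E$. Hence it suffices to prove $C_A\cap E\ne\emptyset$, where $C_A:=C\cap[0,\alpha]$: any such $a$ yields the progression $\{a,\,(1-\lambda)a+\lambda\beta,\,\beta\}\subset C$, and this progression is automatically nondegenerate, since $a\le\alpha<\beta$ forces $a\ne\beta$ and then $(1-\lambda)a+\lambda\beta$ lies strictly between $a$ and $\beta$.

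Finally, apply the Gap Lemma to $C_A$ and $E$. The thickness product is at least $1$: we have $\tau(E)=\tau(C)\ge1$, and $\tau(C_A)\ge\tau(C)$ because restricting $C$ to the left bridge of its largest gap leaves every surviving gap with the same two bridges it had in $C$ — building $C_A$ from $[0,\alpha]$ simply replays the construction of $C$ with $U$ and everything to its right deleted — a standard monotonicity I would cite or verify in a line. A short computation gives $\conv(E)=\big[\tfrac{-\lambda\beta}{1-\lambda},\,\tfrac{1-\lambda\beta}{1-\lambda}\big]\supseteq[0,\alpha]=\conv(C_A)$ (using $\beta\le 1$), which settles the convex-hull hypothesis and also rules out $E$ lying in a gap of $C_A$. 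The one genuinely delicate point — and the step I expect to be the main obstacle — is showing that $C_A$ does not lie in a gap of $E$: such a gap would have to come from a gap $G=(x,y)$ of $C$ with $x<\lambda\beta$ and $y>\alpha+\lambda(\beta-\alpha)$, and this is exactly where the reductions are used, since $\lambda\le\tfrac12$ together with $\alpha\ge\beta-\alpha$ forces $x<\alpha<y$, i.e.\ $\alpha\in G$, contradicting $\alpha\in C$ (the borderline case $G=U$ being excluded the same way). With the three hypotheses verified, the Gap Lemma gives $C_A\cap E\ne\emptyset$, which completes the proof.
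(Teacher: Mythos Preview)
Your proof is correct and takes a genuinely different, more economical route than the paper. The paper also normalizes to $\conv(C)=[0,1]$ and splits at the largest gap $G=(k_1,k_2)$ into bridges $A=C\cap[0,k_1]$ and $B=C\cap[k_2,1]$, but it then lets \emph{both} endpoints vary: for each $t$ it applies the Gap Lemma to $-(1-\lambda)A$ and $\lambda B-t$, the ``interwoven'' hypothesis restricts the admissible $t$ to a union $\widetilde I_\lambda$ of two closed intervals, and a case analysis on whether $|A|\le|B|$ or $|B|\le|A|$ is used to verify $k_2\in\widetilde I_\lambda\cap C$. By contrast, you pin down $b=\beta$ at the outset and reduce to a single intersection $C_A\cap E$ with $E$ an affine image of the whole of $C$; this collapses the proof to one Gap Lemma application with no parametrization and no case split. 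The paper's sumset argument has the mild advantage that it identifies an explicit interval of middle points $t$ realizable in $(1-\lambda)A+\lambda B$, which could in principle be recycled, whereas your argument gives the configuration directly. Conversely, your reduction is cleaner precisely because $E$ carries the full thickness $\tau(E)=\tau(C)$ without invoking bridge monotonicity for the second set, and the key ``not in a gap'' check becomes the single inequality $\lambda\beta\le\alpha$, which is immediate from $\lambda\le\tfrac12$ and $\alpha\ge\beta-\alpha$.
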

\vskip.125in

The proof of this result relies on demonstrating that 
$C\cap \left( (1-\lambda) C+ \lambda C \right)\neq \emptyset$
and is found in Section \ref{proof section newhouse}.

As a consequence of Proposition \ref{prop:assymetric progressions updated} combined with the fact that the interior of the difference set $$C-C= \{x-y: x,y\in C\}$$ 
has non-empty interior, 
we have the following geometric consequence for triangles.

\begin{theorem}[3-point configurations in $C\times C$]\label{thm: triangles Cartesian}
    Let $T$ denote any 3-point set in $\R^2$.  If $\tau(C) \geq1$, then $C\times C$ contains a similar copy of $T$. 
\end{theorem}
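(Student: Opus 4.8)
The plan is to reduce the realization of an arbitrary three-point set $T = \{w^1, w^2, w^3\} \subset \R^2$ inside $C \times C$ to a combination of Proposition \ref{prop:assymetric progressions updated} (applied coordinatewise) and the non-empty interior of $C - C$. First I would normalize: after applying a translation we may assume $w^1 = (0,0)$, and after applying a rotation and scaling we are free to rotate $T$ by any angle and scale by any positive factor, since a ``similar copy'' allows rotation, translation, and dilation. So the goal becomes: find a rotation angle $\theta$ and scale $t > 0$ such that the rotated-scaled translate of $T$ has all three vertices in $C \times C$. Equivalently, writing the (rotated, scaled) displacement vectors as $u = t R_\theta(w^2 - w^1)$ and $v = t R_\theta(w^3 - w^1)$, I need a point $p \in C \times C$ with $p + u \in C \times C$ and $p + v \in C \times C$.

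Next I would exploit the product structure. Write $u = (u_1, u_2)$ and $v = (v_1, v_2)$, and $p = (p_1, p_2)$ with $p_1, p_2 \in C$. The three containment conditions split into two independent systems, one per coordinate: in the first coordinate I need $p_1 \in C$, $p_1 + u_1 \in C$, $p_1 + v_1 \in C$, and similarly in the second. So it suffices to find, in $C$, a translate of the one-dimensional three-point set $\{0, u_1, v_1\}$ and a translate of $\{0, u_2, v_2\}$. Proposition \ref{prop:assymetric progressions updated} says $C$ realizes \emph{any} three-point subset of the line, i.e. for any reals $0, s, s'$ (not all coinciding in a degenerate way — but the proposition as stated covers all three-point subsets) there is a translated, scaled copy inside $C$. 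However, here is the subtlety: I am not free to independently scale the two coordinate configurations, because the scale $t$ and rotation $\theta$ are shared. So I cannot just invoke the proposition twice with independent dilation factors.

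This is where the difference set $C - C$ enters, and I expect this coupling to be the main obstacle. The key observation is that $C - C$ has non-empty interior (this is classical for $\tau(C) \geq 1$, and is precisely the ``$\lambda \to$ endpoint'' or direct Gap Lemma input the authors cite), so it contains some interval $[\alpha, \beta]$ with $\alpha < \beta$, and in fact an interval of positive length around some point. The strategy is to choose the rotation angle $\theta$ and scale $t$ so that \emph{one} of the two coordinate problems is handled ``for free'' by landing displacement values inside $C - C$ (where we only need $p_i$ and $p_i + (\text{displacement})$ in $C$, i.e. the displacement lies in $C - C$), while the \emph{other} coordinate problem is exactly a three-point configuration to which Proposition \ref{prop:assymetric progressions updated} applies. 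Concretely: parametrize $\theta$ so that, say, the second coordinates $u_2(\theta, t)$ and $v_2(\theta, t)$ of the two displacement vectors are both small enough to lie in an interval contained in $C - C$ — one can arrange $u_2, v_2$ to be near $0$ by taking $\theta$ near the angle that aligns $w^2 - w^1$ and $w^3 - w^1$ nearly horizontally, or by taking $t$ small; then for a fixed such $\theta$, the first coordinates $u_1 = t R_\theta(\cdot)_1$, $v_1 = t R_\theta(\cdot)_1$ form a three-point set on the line whose \emph{ratio} $v_1/u_1$ is fixed (independent of $t$), so it is a genuine three-point configuration $\{0, u_1, v_1\} = u_1 \cdot \{0, 1, \lambda\}$ for the appropriate $\lambda = v_1/u_1 \in (0,1)$ after relabeling, and Proposition \ref{prop:assymetric progressions updated} provides a translated scaled copy in $C$ — and crucially the scaling freedom in that proposition is what then fixes $t$. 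Finally I would check consistency: the value of $t$ forced by the first-coordinate application must keep $u_2, v_2$ inside the interval of $C - C$; since shrinking $t$ shrinks $u_2, v_2$ proportionally and the proposition's realization works at all sufficiently small scales, choosing the initial interval in $C - C$ generously and $t$ small enough closes the loop. The degenerate cases (when $w^2 - w^1$ and $w^3 - w^1$ are parallel, so $T$ is itself collinear, or when some coordinate displacement vanishes identically) reduce directly to a single application of Proposition \ref{prop:assymetric progressions updated} in one coordinate and membership in $C - C$ or a point of $C$ in the other, so they are easier.
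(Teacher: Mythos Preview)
Your overall strategy matches the paper's: split into coordinates, use Proposition~\ref{prop:assymetric progressions updated} in one coordinate, and use the fact that $C-C$ contains an interval in the other. But your main line of argument has a genuine gap, and the fix is exactly the ``degenerate case'' you mention at the end.

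The problem is in the second-coordinate step. You propose choosing $\theta$ so that $u_2$ and $v_2$ are both small and lie in an interval contained in $C-C$. But knowing $u_2\in C-C$ and $v_2\in C-C$ only gives you points $p_2,p_2+u_2\in C$ and (separately) $p_2',p_2'+v_2\in C$; it does \emph{not} give a common basepoint $p_2\in C$ with $p_2+u_2\in C$ and $p_2+v_2\in C$ simultaneously. That would require the triple intersection $C\cap(C-u_2)\cap(C-v_2)\neq\emptyset$, which is not a consequence of the Gap Lemma and is essentially another instance of the three-point problem you are trying to solve. Moreover, you cannot make $u_2$ and $v_2$ both exactly zero unless $w^2-w^1$ and $w^3-w^1$ are parallel, i.e.\ $T$ is collinear.

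The paper's move---and the fix---is to choose $\theta$ so that one side of $T$ is horizontal, say $u_2=0$ exactly. Then the second-coordinate configuration is $\{0,0,v_2\}$, which has only \emph{two} distinct values, and you genuinely only need $v_2\in C-C$ (one difference, one common basepoint). In the first coordinate you now have the three-point set $\{0,u_1,v_1\}$, to which Proposition~\ref{prop:assymetric progressions updated} applies; the proposition can be invoked at arbitrarily small scales (apply it to $C$ intersected with an arbitrarily short bridge, which still has thickness $\ge 1$), so you can take the resulting scale small enough that the corresponding $v_2$ lands in the interval $[0,L]\subset C-C$. You already described precisely this argument in your last sentence as the ``degenerate'' case---it is in fact the whole proof.
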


It follows from Theorem \ref{thm: triangles Cartesian} that the Cartesian product $C\times C$ contains the vertices of a similar copy of any 3-point configuration whenever $C \subset \R$ is a compact set satisfying $\tau(C) \geq1$. 
For emphasis, we state the result for equilateral triangles (see Figure \ref{tri_cart_fig}). 
\begin{corollary}
    If $\tau(C) \geq1$, then $C\times C$ contains the vertices of an equilateral triangle. 
\end{corollary}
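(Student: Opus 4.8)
The plan is to derive the corollary directly from Theorem \ref{thm: triangles Cartesian} by observing that the vertices of an equilateral triangle form a particular three-point set $T \subset \R^2$. Since Theorem \ref{thm: triangles Cartesian} guarantees that $C \times C$ contains a similar copy of \emph{any} three-point set $T$ whenever $\tau(C) \geq 1$, it suffices to apply the theorem with $T$ taken to be (the vertex set of) any fixed equilateral triangle, say with vertices $\{(0,0), (1,0), (1/2, \sqrt{3}/2)\}$. The conclusion is then immediate.

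If instead one wanted a self-contained argument tracing through the mechanism, I would follow the route indicated in the excerpt: reduce the problem to an intersection statement. A similar copy of an equilateral triangle in $C \times C$ amounts to finding $t \neq 0$ and a rotation/translation so that the three rotated-scaled vertices all land in $C \times C$; equivalently, using the difference-set trick $C - C$ having nonempty interior together with Proposition \ref{prop:assymetric progressions updated} to place one coordinate of each vertex appropriately. Concretely, one picks $\lambda \in (0,1)$ and an aspect ratio so that, after an affine change of coordinates, the triangle's vertices have first coordinates forming a $3$-term convex-combination progression $\{a, (1-\lambda)a + \lambda b, b\}$ realizable in $C$ by Proposition \ref{prop:assymetric progressions updated}, while the second coordinates can be adjusted to lie in $C$ using the interior of $C - C$; balancing these two constraints is exactly what Theorem \ref{thm: triangles Cartesian} already packages.

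The main (and essentially only) obstacle is purely bookkeeping: verifying that an equilateral triangle is genuinely a three-point set in $\R^2$ in the sense used by Theorem \ref{thm: triangles Cartesian}, i.e. that the theorem's hypothesis ``$T$ is any three-point set in $\R^2$'' imposes no non-degeneracy or collinearity restriction that would exclude it. Since an equilateral triangle's vertices are three distinct non-collinear points and the theorem statement places no further restriction on $T$, there is nothing to check. Thus the proof is a one-line specialization, and I expect no substantive difficulty.
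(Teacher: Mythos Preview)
Your proposal is correct and matches the paper's approach exactly: the corollary is stated ``for emphasis'' as an immediate specialization of Theorem~\ref{thm: triangles Cartesian}, with no separate proof given, and your one-line application of that theorem to the equilateral vertex set $\{(0,0),(1,0),(1/2,\sqrt{3}/2)\}$ is precisely what is intended. Your optional self-contained sketch also mirrors the mechanism in the paper's Figure~\ref{tri_cart_fig} caption and the proof of Theorem~\ref{thm: triangles Cartesian}.
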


The proofs for the results in this section appear in Section \ref{proof section newhouse}.

\begin{figure}\label{tri_cart_fig}
    \centering    {\includegraphics[width=3in]{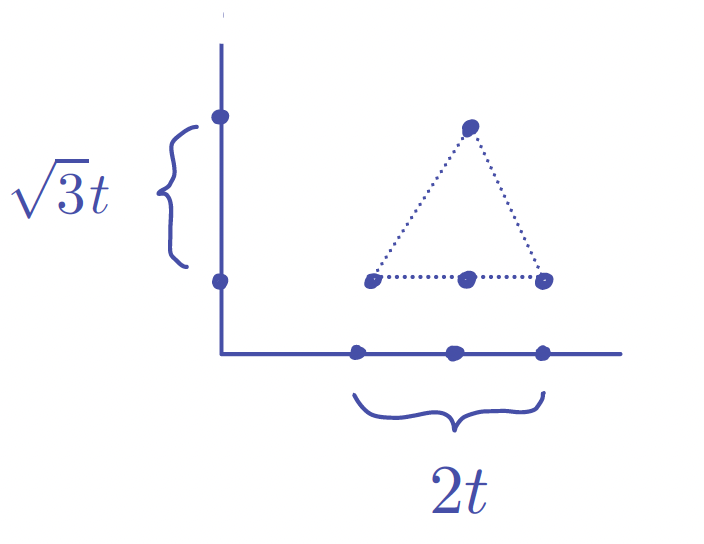}}
    \caption{We see that $C\times C$ contains an equilateral triangle by combining two facts: (i) $C$ contains an arithmetic progression $\mathcal{A} = \{x, x+t, x+2t\}$, where $t>0$ can be taken arbitrarily small; (ii) the distance set $\Delta(C)$ contains an interval $[0,\ell]$ for some $\ell>0$.}
\end{figure}

\begin{remark}[Our result is a first of its kind in $\R^2$]
Our Theorem \ref{thm: triangles Cartesian} (and Theorem \ref{thm: triangles no Cartesian} below) are among the first in the literature to give explicit criteria for the occurrence of 3-point configurations in the plane, following the work of Chan-\L aba-Pramanik \cite{pramanik_chan_laba} proving the existence of $3$-term arithmetic progressions in some closed subsets of $\R^d$ with sufficient Fourier decay and power mass decay, respectively, and Yavicoli \cite{Yavicoli_Gap_Lemma_Rd} showing that subsets of $\R^d$ of Yavicoli-thickness larger than $10^7$ contain $3$-point configurations. 
\end{remark}

 \begin{remark}[Comparison between Newhouse thickness and Hausdorff dimension]
 As mentioned above, Hausdorff dimension alone is not enough to guarantee the realization of similar triangles in subsets of $\R^2$.
 Our result gives a class of compact Lebesgue null subsets of $\R^2$ and explicit criteria, mainly $\tau(C)\geq 1$, 
 that guarantees the realization of a similar copy of any 3-point configuration in $C\times C$. 
 Because Hausdorff dimension and Newhouse thickness obey the following relationship \cite{Palis1993HyperbolicityAS} for $\tau(C)>0$,
\begin{equation*}
        \dim_H(C) \geq \frac{\log(2)}{\log(2+\frac{1}{\tau(C)})}, 
 \end{equation*}
we can calculate a lower bound for the Hausdorff dimension of $C\times C$ using its thickness; i.e., since $\tau(C)\geq 1$ we know $\dim_H(C\times C)\geq 2\dim_H(C) \geq 2\frac{\log{2}}{\log{3}}$. 
\end{remark}

\begin{remark}[Longer progressions]\label{BF}
For longer progressions, higher thickness is required.  
It is known that the middle-$\epsilon$ Cantor set $C_\epsilon$ does not contain arithmetic progressions of length $\lfloor \frac{1}{\epsilon}\rfloor+2$ or larger. 
In particular, Broderick, Fishman, Simmons \cite{broderick_fishman_simmons} proved that if 
 $L_\text{AP}(S)$ denotes the maximal length of an arithmetic progression in a set $S\subset \R$, then 
    for all $\epsilon>0$ sufficiently small and $n\in\N$ sufficiently large, 
    \begin{align*}
         L_\text{AP}(C_\epsilon)\leq 1/\epsilon + 1.
    \end{align*}
It is also not hard to see that $L_\text{AP}(C_\epsilon) =2 $ for all $\epsilon >\frac13$. 

Further, using Newhouse's gap lemma and symmetry, $L_\text{AP}(C_\epsilon) \geq 4$ for all $0<\epsilon\le \frac13$ (this lower bound is attributed to Shmerkin and pointed out in \cite[footnote 4]{broderick_fishman_simmons}). 
Indeed, it follows by the gap lemma that there exists a $t \in (C_\epsilon-\frac12) \cap \frac13(C_\epsilon - \frac12)\neq \emptyset$, and so we have 
$c_1, c_2 \in C_\epsilon$ so that 
$$c_1 =\frac12 +t \,\,\, \text{ and } \,\,\, c_2 = \frac12+3t.$$
By symmetry of $C_\epsilon$ about $\frac12$, we further have $c_3, c_4 \in C_\epsilon$ so that 
$$c_3 = \frac12-t  \,\,\, \text{ and } \,\,\, c_4 = \frac12 -3t,$$
which yields the desired $4$-AP. \\
  \end{remark}

\begin{figure}[h!]
    \centering    {\includegraphics[width=4.5in]{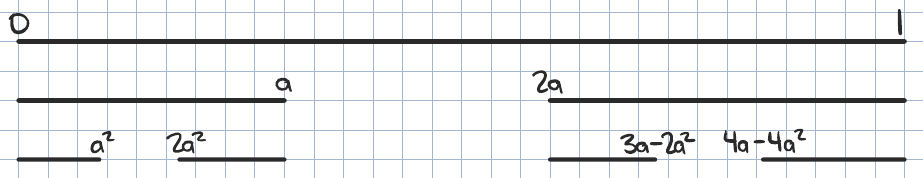}}
    \caption{First two iterations of off-center Cantor set $C_a$ for $a=\frac{3}{10}$.}\label{fig:off_center_cantor}
\end{figure}

\begin{remark}[Sharpness of our result: the \textit{off-center Cantor set} has thickness 1 and no $4$-AP]\label{off center}
We now construct a set of thickness $1$ that contains no $4$-term arithmetic progressions. 
We call our construction the \textit{off-center Cantor set} and denote it $C_a$.
\vskip.125in

Let $0<a<\frac{1}{3}$. 
We construct $C_a$ iteratively, starting with $[0,1]$, by removing from each interval $I$ an open interval of length $a|I|$, call this open interval $G$. 
Then $I\setminus G$ is split into two closed intervals: the left interval $L$ of length $a|I|$ and the right interval $R$ of length $(1-2a)|I|$. 
Hence, we remove $(a,2a)$ from $[0,1]$ to obtain $[0,a]\cup[2a,1]$.
From these intervals, we remove $(a^2,2a^2)$ and $(3a-2a^2,4a-4a^2)$ to obtain $[0,a^2]\cup[2a^2,a]\cup[2a,3a-2a^2]\cup[4a-4a^2,1]$, and so on, as illustrated in Figure \ref{fig:off_center_cantor}.
\vskip.125in

Observe that by construction $|L|=|G|< |R|$; as this is true for every step in the construction of $C_a$, $$\tau(C_a) =\frac{\min\{|L|,|R|\}}{|G|}=1.$$
Thus $C_a$ is a self-similar set of thickness $1$.
\vskip.125in

We will now show $C_a$ does not contain any $4$-term arithmetic progressions for $\frac{2-\sqrt{2}}{2}<a<\frac{3-\sqrt{3}}{4}$. 
Assume to the contrary that $C_a$ contains a $4$-AP, call it $P=\{x,x+y,x+2y,x+3y\}$. 
Then there exists some smallest interval in the construction of $C_a$ that contains $P$ but $P$ is split over two levels in the next step of the construction.
Because of the self-similarity of $C_a$, we assume without loss of generality that $P$ is contained in $[0,1]$ but splits over $[0,a]\cup[2a,1]$.
\vskip.125in

For $P$ to be contained in $[0,1]$ and split over $[0,a]\cup[2a,1]$, we must have $0\leq x\leq a$, $y\geq a$ (otherwise the arithmetic progression cannot jump the gap $(a,2a)$), and $x+3y\leq 1$. 
Then observe that $x+y\ngeq 2a$ because if it was then
$$x+3y\geq 2y+2a\geq 4a>1,$$
a contradiction. 
We also have $x+y\notin(a,2a)$; otherwise, $P$ would not be contained in $C_a$.
Thus it must be the case that $x+y\in [0,a]$.
Because $0\leq x\leq a$, $y\geq a$, and $0\leq x+y\leq a$, we conclude that $x=0$ and $y=a$, so $P=\{0,a,2a,3a\}$. 
\vskip.125in

Notice that $0,a,2a\in C_a$, so it remains to show $3a\notin C_a$. 
By construction of $C_a$, the interval $[4a-4a^2,1]$ is split by the gap $(5a-8a^2+4a^3,6a-12a^2+8a^3)$, and by the assumption that $\frac{2-\sqrt{2}}{2}<a<\frac{3-\sqrt{3}}{4}$, we know $3a\in (5a-8a^2+4a^3,6a-12a^2+8a^3)$. 
Consequently, $C_a$ cannot contain $P$. 
By self-similarity, $C_a$ cannot contain any $4$-term arithmetic progressions.
  \end{remark}
\vskip.125in

In the next section, we introduce higher dimensional variants of Proposition \ref{prop:assymetric progressions updated} (on 3-point configurations on the line) and Theorem \ref{thm: triangles Cartesian} (on triangles in the plane) that \textbf{do not} depend on Cartesian product structure.

\subsection{3-point configurations in $\R^d$ \,\,\&\,\, Triangles in the plane part II}\label{sec: Yavi results}
In this section, we introduce results in dimensions $d\geq 2$. 
 Theorem \ref{thm: higher dim convex combo} of this section yields conditions to guarantee the occurrence of arithmetic progressions and other linear 3-point configurations in $\R^d$. 
Beyond linear combinations,
Theorem  \ref{thm: triangles no Cartesian} guarantees the occurrence of a similar copy of any 3-point configuration in higher dimensions.
\vskip.125in

Here, we use a higher-dimensional notion of thickness introduced by Yavicoli \cite{Yavicoli_Gap_Lemma_Rd}.   
We directly state the results of this section, and we delay formal introduction of Yavicoli thickness and the corresponding gap lemma to Section \ref{sec:thickness in R^d}. 
We require the notion of a system of balls and $r$-uniformity, which will also be defined in \S \ref{sec:thickness in R^d}. 
\vskip.125in

Our first result says that a compact set $C$ generated by a system of balls $\{S_I\}_I$ in $\R^d$ with Yavicoli thickness (Definition \ref{def:thickness} below) satisfying 
\begin{equation*}
    \tau\lp C,\{S_I\}\rp \geq \frac{2}{1-2r} 
\end{equation*}
for some $0<r<\frac{1}{2}$ contains a $3$-point arithmetic progression; e.g., any $\frac{1}{4}$-uniformly compact set of thickness greater than $4$ contains an arithmetic progression of length $3$.
\vskip.125in

   The proof of this result is inspired by the proof of 
   Proposition \ref{prop:3AP in R}. 
   For a compact set $C$, we take two disjoint subsets $A$ and $B$ 
and apply the Gap Lemma to show that $C\cap\lp\la A +(1-\la)B\rp \neq \emptyset$. 
The assumption that $0<r< \frac12$ is used to apply the gap lemma in Theorem \ref{thm:gap lemma Rd}. 
Our proofs quickly diverge, though, as we lose the well-ordering of $\R$ in higher dimensions and the higher-dimensional Gap Lemma has a number of additional assumptions to verify over the one-dimensional Gap Lemma.
\vskip.125in

In particular, our method views compact sets in $\R^d$ as a sequence of nested balls, each generation of which is finite, and our method requires the existence of two first-generation children, call them $S_{1_A}$ and $S_{1_B}$, out of the total $k_\emptyset$ first-generation children that are both disjoint from all first generation children; these first-generation children $S_{1_A}$ and $S_{1_B}$ are used to construct the sets $A$, $B$ mentioned above.
This requirement is explained in Section \ref{sec: thickness of a subset}. 
\vskip.125in

\begin{theorem}[Convex combinations in $\R^d$]\label{thm: higher dim convex combo}
    Let $C$ be a compact set in $(\R^d,\dist)$ generated by the system of balls $\{S_I\}_I$ such that $C$ is $r$-uniformly dense where $ 0<r<\frac{1}{2}$.
    Let $\la\in (0,\frac12]$,
    and suppose that 
    $$\tau\lp C,\{S_I\}\rp \geq \frac{2(1-\lambda)}{\lambda(1-2r)}.$$   
    Suppose that there exist distinct first generation children disjoint from all other children: $S_{1_A}$ and $S_{1_B}$ with $1\leq 1_A<1_B\leq k_\emptyset$ such that $S_{1_A} \cap S_i  = \emptyset$ and $S_{1_B}\cap S_i =\emptyset$ for all $i\neq 1_A,1_B$ where $1\leq i\leq k_\emptyset$.
    Then $C$ contains a $3$-point convex combination of the form $$\{a,\lambda a+(1-\lambda)b,b\}.$$ 
\end{theorem}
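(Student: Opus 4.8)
The plan is to mirror the argument behind Proposition~\ref{prop:assymetric progressions updated} (the line case), replacing the Newhouse gap lemma by its $\R^d$ analogue for systems of balls (Section~\ref{sec:thickness in R^d}). I want to produce points $a,b\in C$ with $a\neq b$ and middle point $p:=\la a+(1-\la)b\in C$. The role of the two distinguished first generation children $S_{1_A},S_{1_B}$ is to force nondegeneracy: if I can arrange $a\in C\cap S_{1_A}$ and $b\in C\cap S_{1_B}$, then $a\neq b$ since $S_{1_A}\cap S_{1_B}=\emptyset$, and hence $a,p,b$ are three distinct points (and the same remains true after any nonzero scaling). Thus it suffices to show
$$C\cap\lp \la\,(C\cap S_{1_A})+(1-\la)\,(C\cap S_{1_B})\rp\neq\emptyset,$$
i.e.\ that the affine constraint $p=\la a+(1-\la)b$ is simultaneously solvable with all three points in the tree of balls generating $C$; equivalently, one may read the whole argument below as a single application of the $\R^d$ gap lemma to $C$ and the Minkowski combination set on the right.

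To do this I would run a scale by scale descent in the tree $\{S_I\}_I$. I build three nested sequences of balls, $S_{K_n}$ (containing the eventual $p$), $S_{I_n}$ (containing $a$) and $S_{J_n}$ (containing $b$), with $K_0=\emptyset$, $I_0=1_A$, $J_0=1_B$, maintaining at every level the compatibility condition that $\la S_{I_n}+(1-\la)S_{J_n}$ — which fills out a definite fraction of a ball of radius $\la\,\rad(S_{I_n})+(1-\la)\,\rad(S_{J_n})$ centred at the corresponding combination of centres — meets $S_{K_n}$. For the inductive step I pass from one of the three balls (say the one of largest current radius) to a child and then use $r$-uniformity of $C$ together with the thickness lower bound to select compatible children of the other two: $r$-uniformity keeps the children a definite distance $\sim r$ inside the parent, leaving a usable core of relative size about $1-2r$, while the thickness bound controls the sizes of the gaps between consecutive "reachable" Minkowski combinations of children of $S_{I_n}$ and $S_{J_n}$. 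The hypothesis $\tau(C,\{S_I\})\geq \tfrac{2(1-\la)}{\la(1-2r)}$ is precisely the amount of thickness that makes these reachable combinations cover enough of any child of $S_{K_n}$ to continue: the worst such gap has size of order $\tau^{-1}$ times the current scale, weighted by the factors $\la$ and $1-\la$ from the combination and reduced by the factor $1-2r$ from uniformity, and requiring this to be at most the radius of a child of $S_{K_n}$ yields exactly that threshold (and reduces to $\tfrac{2}{1-2r}$ when $\la=\tfrac12$, matching the $3$-term progression case). Passing to the limit, the points $a\in\bigcap_n S_{I_n}$, $b\in\bigcap_n S_{J_n}$, $p\in\bigcap_n S_{K_n}$ all lie in $C$, satisfy $p=\la a+(1-\la)b$ by continuity of the combination under nesting, and $a\neq b$ by the first generation disjointness; this is the desired configuration.

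The main obstacle is the inductive step: quantifying, in the $\R^d$ ball setting, how well Minkowski combinations of $r$-uniformly placed children fill the parent, and verifying that the stated thickness suffices to continue at \emph{every} scale no matter which of the three balls is refined at a given stage — this is the asymmetric ($\la\neq\tfrac12$), higher dimensional incarnation of the gap-lemma mechanism, and it is where Yavicoli's notion of thickness and her $\R^d$ gap lemma (Section~\ref{sec:thickness in R^d}) carry the weight. A secondary technical point is to keep the three sequences of radii comparable, so that none of the nested sequences shrinks so much faster than the others that the compatibility condition becomes impossible to maintain; always refining the ball of largest current radius is the natural way to handle this.
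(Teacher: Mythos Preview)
Your high-level reduction is the same as the paper's: show
\[
C\cap\bigl(\la A+(1-\la)B\bigr)\neq\emptyset,\qquad A:=C\cap S_{1_A},\ B:=C\cap S_{1_B},
\]
and conclude $\{a,\la a+(1-\la)b,b\}\subset C$ with $a\neq b$. But your execution diverges from the paper's, and the step you flag as ``the main obstacle'' is in fact not carried out.

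The paper does \emph{not} run a three-way descent. Instead it decouples the problem into two independent pieces. First, for each fixed $t$ it applies Yavicoli's Gap Lemma (Theorem~\ref{thm:gap lemma Rd}) to the \emph{pair} of sets $-\la A$ and $(1-\la)B-t$; here Lemma~\ref{lem:tau(A)>1/2tau(C)} (``thickness of a subset is at least half that of $C$'') is the source of the factor $2$ in the threshold, and the remaining factor $\frac{1-\la}{\la(1-2r)}$ is exactly what is needed for hypothesis (i) of the Gap Lemma. Checking the geometric hypotheses (ii)--(iii) for every $t$ in a specific closed ball $D=\bar B(\la c_{1_A}+(1-\la)c_{1_B},\,t_D)$ shows $D\subset \la A+(1-\la)B$. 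Second, and entirely separately, the paper shows $D\cap C\neq\emptyset$ by a one-line argument: $D\subset S_\emptyset$ by convexity, and $t_D\geq h_\emptyset(C)$, so by the very definition of $h_\emptyset$ the ball $D$ must contain a point of $C$. No descent on the $C$ side is ever needed.

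Your proposal replaces this with a simultaneous three-sequence refinement, but you neither state nor prove the inductive step, and the suggestion that it is ``equivalently\dots a single application of the $\R^d$ gap lemma to $C$ and the Minkowski combination set'' does not work as written: the Gap Lemma requires the second set to come with its own system of balls and known thickness, and $\la A+(1-\la)B$ is not presented that way. Your heuristic for why the threshold $\tfrac{2(1-\la)}{\la(1-2r)}$ arises (gaps of order $\tau^{-1}$ weighted by $\la,1-\la$ and $1-2r$) is plausible-sounding but does not match the actual derivation; in particular the $2$ comes from Lemma~\ref{lem:tau(A)>1/2tau(C)}, not from a covering argument. If you want to salvage your route you would have to essentially re-prove a three-set version of the Gap Lemma, tracking constants carefully; the paper's two-step decoupling avoids all of that.
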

\vskip.125in

In particular, under the hypotheses above with $\lambda = \frac{1}{2}$, we have the following. 
\begin{corollary}[$3$--term arithmetic progressions in $\R^d$]\label{cor: higher dim 3AP from convex combo}
 Let $C$ be a compact set in $(\R^d,\dist)$ generated by the system of balls $\{S_I\}_I$ such that $C$ is $r$-uniformly dense where $ 0<r<\frac{1}{2}$.
Suppose that
      $$\tau\lp C,\{S_I\}\rp \geq \frac{2}{1-2r},$$ 
  and suppose that there exist distinct first generation children of $\{S_I\}$ disjoint from all other children.
      Then $C$ contains an arithmetic progression $\{a, \frac{1}{2}(a+b),b\}$ with $a\neq b$. 
\end{corollary}

\begin{remark}
Observe 
Theorem \ref{thm: higher dim convex combo} has a thickness condition that depends on $r$ and $\la$, whereas the $1$-dimensional analogue, Proposition \ref{prop:assymetric progressions updated}, does not.  
 In the higher dimensional Gap lemma \ref{thm:gap lemma Rd}, there are additional assumptions such as $r$-uniformity and the relationships in (ii) and (iii) which ensure the sets are interwoven. 
 These additional assumptions lead to a thickness condition that depends on $r$ and $\lambda$.
\end{remark}


Next, we prove a result on the existence of triangles in compact sets of sufficient Yavicoli thickness, but first we need a way to categorize all triangles. 

\begin{figure}[h]
    \centering    {\includegraphics[width=2.5in]{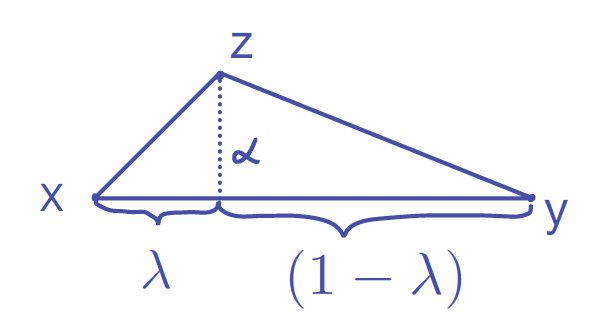}}
    \caption{The triangle $Talpha,lambda$ with vertices $x,y,z$, largest angle at $z$, height $\alpha$, and base $1$.}\label{Talpha,lambda_fig}
\end{figure}

\begin{definition}[normalized triangle, Figure \ref{Talpha,lambda_fig}]
    For $\alpha \geq 0$, $\lambda\in (0,\frac12]$, we define $\mathcal{T}(\alpha,\lambda)$ as the triangle consisting of the vertices $\{x,y,z\}$ such that the angle at vertex $z$, $\theta_z$, is the largest angle, and we normalize the longest side of the triangle, the side between vertices $x$ and $y$, to be $1$; i.e., $|y-x|=1$.
    Let $\alpha$ denote the height of the triangle. The altitude from $z$ bisects the line segment from $x$ to $y$ into two segments, and we denote their lengths by $\lambda$ and $(1-\lambda)$. 
\end{definition}

\begin{lemma}\label{lem:triangle Talpha,lambda}
    Let $\mathcal{T}$ be the vertices of any non-degenerate triangle in $\R^2$. 
    Then there exists an $(\alpha,\lambda)$ in 
    \begin{equation*}
        \mathcal{R}= \left\{ (\alpha,\lambda)\in \R^2 : 0< \alpha , 0\leq \lambda \leq \frac{1}{2}, \alpha^2+(1-\lambda)^2\leq 1  \right\}.
    \end{equation*}
    such that $\mathcal{T}$ is similar to the triangle $\mathcal{T}(\alpha,\lambda)$. 
\end{lemma}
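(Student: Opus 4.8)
The plan is to give a direct coordinate construction: place an arbitrary non-linear three-point configuration in the plane, rescale it so that its longest side has length $1$, and then read off the parameters $\alpha$ and $\lambda$ by elementary trigonometry, finally checking that the resulting pair lies in $\mathcal{R}$.

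First I would take $\mathcal{T} = \{p,q,r\}$ with the three points not collinear. Among the three side lengths $|p-q|, |q-r|, |r-p|$, pick a longest one; after relabeling, say it is $|p-q|$, and call the opposite vertex $z := r$, so that the angle $\theta_z$ at $z$ is the largest angle of the triangle (the largest angle faces the longest side). Apply a similarity (translation, rotation, and scaling by $1/|p-q|$) carrying the segment $\overline{pq}$ onto the segment from $x := (0,0)$ to $y := (1,0)$; this is allowed since "similar copy" is exactly invariance under such maps. Under this normalization $z$ lands at some point $(u,v)$ with $v \neq 0$ (non-collinearity), and replacing $z$ by its reflection across the $x$-axis if necessary we may take $v > 0$. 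Set $\alpha := v > 0$, the height of the triangle, and let the foot of the altitude from $z$ be $(u,0)$. The altitude splits $\overline{xy}$ into pieces of length $u$ and $1-u$; define $\lambda := \min\{u, 1-u\} \in [0, \tfrac12]$, so the two pieces have lengths $\lambda$ and $1-\lambda$ as in the definition of $\mathcal{T}(\alpha,\lambda)$. By construction the resulting triangle with vertices $x=(0,0)$, $y=(1,0)$, $z=(u,\alpha)$ is exactly a congruent copy of $\mathcal{T}(\alpha,\lambda)$ (reflecting $x\leftrightarrow y$ if $u > \tfrac12$), hence $\mathcal{T}$ is similar to $\mathcal{T}(\alpha,\lambda)$.

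It remains to verify $(\alpha,\lambda)\in\mathcal{R}$, i.e. $\alpha^2 + (1-\lambda)^2 \le 1$. The two sides emanating from $x$ and $y$ are $\overline{xz}$ and $\overline{yz}$; one of these is the side of length $\sqrt{u^2 + \alpha^2}$ and the other $\sqrt{(1-u)^2 + \alpha^2}$, and the longer of the two has length $\sqrt{(1-\lambda)^2 + \alpha^2}$ since $1-\lambda = \max\{u,1-u\}$. But $\overline{xy}$ was chosen to be a longest side of the triangle, so this length is at most $1$; squaring gives $(1-\lambda)^2 + \alpha^2 \le 1$, which is precisely the defining inequality of $\mathcal{R}$, and $0\le\lambda\le\tfrac12$, $\alpha>0$ hold by construction.

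There is no real obstacle here — the only point requiring a moment's care is bookkeeping the relabelings (which vertex is $z$, and whether to reflect so that $\lambda \le \tfrac12$ and $\alpha > 0$), and confirming that the constraint "$\theta_z$ is the largest angle" is automatically consistent with "$\overline{xy}$ is the longest side," which follows from the law of sines / the standard fact that in any triangle the largest angle is opposite the longest side. One should also note the edge case where two sides tie for longest: then any valid choice works and the argument is unaffected, so the lemma holds as stated.
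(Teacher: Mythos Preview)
Your proposal is correct and follows exactly the approach the paper indicates: the paper's proof is a single sentence stating that the lemma ``is immediate upon scaling, rotating, and labeling the vertices appropriately; the above inequalities are a simple consequence of the Pythagorean theorem,'' and your argument is a careful fleshing-out of precisely that outline. Your extra care in noting that the foot of the altitude from $z$ lands inside $[0,1]$ (because the angles at $x$ and $y$ are each at most $90^\circ$ when $\theta_z$ is the largest angle) and in handling the tie-breaking for longest side is welcome, but there is no methodological difference from the paper.
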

The lemma is immediate upon scaling, rotating, and labeling the vertices appropriately; the above inequalities are a simple consequence of the Pythagorean theorem.

\begin{theorem}[Triangles in $\R^2$]\label{thm: triangles no Cartesian}
    Let $\mathcal{T}$ denote the vertices of any triangle in $\R^2$, and let $\mathcal{T}(\alpha,\lambda)$ be a triangle similar to $\mathcal{T}$ resulting from Lemma \ref{lem:triangle Talpha,lambda} for some $\alpha$, $\lambda$ in $\mathcal{R}$. 
    Let $C\subset\R^2$ be a compact set generated by the system of balls $\{S_I\}$ in the Euclidean norm such that $C$ is $r$-uniformly dense for some $0<r<\frac{1}{2}$. 
    Suppose there exists distinct first-generation children $S_{1_A}$ and $S_{1_B}$, $1\leq 1_A<1_B\leq k_\emptyset$, contained in $\bar{B}\lp0,\frac{1}{2}\rp$ such that $S_{1_A}$ and $S_{1_B}$ are disjoint from all other
    first generation
    children; i.e., $S_{1_A}\cap S_i=\emptyset$ for all $i\neq 1_A$, and $S_{1_B}\cap S_i=\emptyset$ for all $i\neq 1_B$. Further, suppose $$\tau\lp C,\{S_I\}\rp \geq \sqrt{\frac{\alpha^2+(1-\lambda)^2}{\alpha^2+\lambda^2}}\cdot \frac{2}{1-2r}, $$ then $C$ contains the vertices of a similar copy of $\mathcal{T}$. 
\end{theorem}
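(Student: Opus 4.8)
The plan is to run the argument of Theorem~\ref{thm: higher dim convex combo} with the real coefficient $\lambda$ replaced by a complex one. Identify $\R^2$ with $\C$, and record first a coordinate description of the model triangle of Lemma~\ref{lem:triangle Talpha,lambda}: if $x,y\in\C$ are the two base vertices of a copy of $\mathcal{T}(\alpha,\lambda)$, with $x$ the vertex at distance $1-\lambda$ from the foot of the altitude, then the apex is
\[
z = \mu x + (1-\mu)y, \qquad \mu := \lambda - \alpha i .
\]
Indeed, taking $x=0$ and $y=1$ as in Lemma~\ref{lem:triangle Talpha,lambda}, the foot of the altitude is $1-\lambda$ and the apex is $(1-\lambda)+\alpha i = 1-\mu$; the general case follows by applying an orientation-preserving similarity of $\C$, which commutes with multiplication by $\mu$. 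Consequently $z-x=(1-\mu)(y-x)$ and $z-y=\mu(x-y)$, so \emph{any} triple $\{x,y,\mu x+(1-\mu)y\}$ with $x\neq y$ is similar to $\mathcal{T}(\alpha,\lambda)$, and moreover
\[
\frac{|1-\mu|}{|\mu|} = \sqrt{\frac{(1-\lambda)^2+\alpha^2}{\lambda^2+\alpha^2}}
\]
is precisely the factor in the statement. Since $\lambda\le\tfrac12$ we have $|\mu|\le|1-\mu|$, and the defining inequality of $\mathcal{R}$ gives $|1-\mu|\le1$ and (since then $\alpha^2\le\lambda(2-\lambda)$) $|\mu|\le1$, hence $|\mu|+|1-\mu|\le 2$.

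Next I reduce to an intersection statement. Put $A:=C\cap S_{1_A}$ and $B:=C\cap S_{1_B}$; these are compact sets generated by the subsystems of balls rooted at $1_A$ and $1_B$, they inherit $r$-uniform density, and by the results of \S\ref{sec: thickness of a subset} their thicknesses are bounded below in terms of $\tau(C,\{S_I\})$ — here the hypothesis that $S_{1_A}$ and $S_{1_B}$ are disjoint from all other first-generation children is used. Since $S_{1_A}\cap S_{1_B}=\emptyset$, any $x\in A$ and $y\in B$ are distinct. It therefore suffices to produce $x\in A$, $y\in B$ with $z:=\mu x+(1-\mu)y\in C$, for then $\{x,y,z\}\subseteq C$ is a nondegenerate similar copy of $\mathcal{T}(\alpha,\lambda)$, hence of $\mathcal{T}$; equivalently, I must show
\[
C\cap\bigl(\mu A+(1-\mu)B\bigr)\neq\emptyset, \qquad \mu A+(1-\mu)B:=\{\mu x+(1-\mu)y: x\in A,\ y\in B\}.
\]
Using $S_{1_A},S_{1_B}\subseteq\bar B(0,\tfrac12)$ together with $|\mu|+|1-\mu|\le2$, a direct estimate gives $\mu A+(1-\mu)B\subseteq\bar B(0,1)$, i.e.\ this set lies inside the root ball of $C$. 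For real $\mu$ this containment is automatic, since then $\mu A+(1-\mu)B\subseteq\conv C$; for complex $\mu$ it can fail, and this is exactly why the extra hypothesis $S_{1_A},S_{1_B}\subseteq\bar B(0,\tfrac12)$ appears here but not in Theorem~\ref{thm: higher dim convex combo}.

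It remains to establish $C\cap(\mu A+(1-\mu)B)\neq\emptyset$ by invoking the higher-dimensional Gap Lemma (Theorem~\ref{thm:gap lemma Rd}) exactly as in the proof of Theorem~\ref{thm: higher dim convex combo}: a multiscale argument descending through the ball hierarchies of $C$ and of the relevant similarity-copies of $A$ and $B$, at each stage using bridge/gap ratios to force an overlap. The hypotheses to check are: $r$-uniform density (preserved under the similarities $v\mapsto\mu v+w$, which are rotations by $\arg\mu$, scalings by $|\mu|$, and translations); the interweaving conditions (the analogues of convex-hull intersection and of neither set lying in a gap of the other), where the containment $\mu A+(1-\mu)B\subseteq\bar B(0,1)$ replaces the convexity shortcut valid for real $\mu$, and the disjointness of $S_{1_A},S_{1_B}$ from the other first-generation children guarantees enough room; and the thickness condition. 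Carrying the lower bounds on $\tau(A),\tau(B)$ (\S\ref{sec: thickness of a subset}) through the argument and tracking the two scalings — the copy $\mu A$, scaled by $|\mu|$, must bridge the gaps of $C$ that appear at the scale of $(1-\mu)B$, scaled by $|1-\mu|$ — the requirement reduces to exactly $\tau(C,\{S_I\})\ge\frac{|1-\mu|}{|\mu|}\cdot\frac{2}{1-2r}$, the stated hypothesis; when $\alpha=0$ and $\lambda=\tfrac12$ the factor equals $1$ and one recovers Corollary~\ref{cor: higher dim 3AP from convex combo}. The Gap Lemma then yields some $z\in C\cap(\mu A+(1-\mu)B)$, and a representation $z=\mu x+(1-\mu)y$ with $x\in A\subseteq C$, $y\in B\subseteq C$ produces the desired nondegenerate similar copy $\{x,y,z\}$ of $\mathcal{T}$.

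The main obstacle is the verification of the interweaving hypotheses of Theorem~\ref{thm:gap lemma Rd} in this genuinely two-dimensional, rotated situation: unlike the convex-combination case the three points $x,y,z$ are no longer collinear, and $\mu A+(1-\mu)B$ is a rotated (by $\arg\mu$) and rescaled image of subsets of $C$ rather than something confined to $\conv C$. The rotation itself is harmless — systems of balls, $r$-uniformity, and the Gap Lemma are all rotation invariant — but controlling where $\mu A+(1-\mu)B$ sits inside the root ball of $C$ (the role of the hypothesis $S_{1_A},S_{1_B}\subseteq\bar B(0,\tfrac12)$), and carrying the two distinct scalings $|\mu|$ and $|1-\mu|$ through the thickness bookkeeping (which is what produces the $\sqrt{\,\cdot\,}$ factor over the threshold of Corollary~\ref{cor: higher dim 3AP from convex combo}), are where the actual work lies.
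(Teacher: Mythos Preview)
Your setup is exactly the paper's: identifying $\R^2$ with $\C$ and writing the apex as $\mu x+(1-\mu)y$ with $\mu=\lambda-\alpha i$ is just cleaner notation for the paper's scalar-rotation matrices $f=s_fR_f$, $g=s_gR_g$ (indeed $|\mu|=s_f$, $|1-\mu|=s_g$), and the target intersection $C\cap(\mu A+(1-\mu)B)\neq\emptyset$ is the paper's $C\cap H(A,B)\neq\emptyset$. Your containment $\mu A+(1-\mu)B\subseteq\bar B(0,1)$ via $|\mu|+|1-\mu|\le2$ is correct and even slightly streamlines the paper's endgame.

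Where the sketch goes wrong is in \emph{how} the Gap Lemma enters. You describe ``descending through the ball hierarchies of $C$ and of the similarity-copies of $A$ and $B$'' and speak of $\mu A$ ``bridging the gaps of $C$''. But the Gap Lemma is never applied with $C$ as one of the two sets. The argument (both here and in Theorem~\ref{thm: higher dim convex combo}) has two decoupled steps. First, for each $t$ in an explicit disc $D=\bar B\bigl(\mu c_{1_A}+(1-\mu)c_{1_B},\,t_D\bigr)$ with $t_D=(1-2r)|\mu|\,t_{1_A}+|1-\mu|\,(t_{1_B}-h_{1_B})$, one applies the Gap Lemma to the pair $\bigl(\mu A-t,\ -(1-\mu)B\bigr)$ to conclude $t\in\mu A+(1-\mu)B$; this is where the thickness hypothesis and the ratio $|1-\mu|/|\mu|$ enter (via $\tau(\mu A)\tau((1-\mu)B)\ge(1-2r)^{-2}$ and the interweaving checks), and it yields $D\subseteq\mu A+(1-\mu)B$. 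Second, one shows $D\cap C\neq\emptyset$ by the elementary fact that any ball of radius at least $h_\emptyset$ inside $S_\emptyset$ meets $C$: this needs only the estimate $t_D\ge h_\emptyset$ and the location of $D$ inside $S_\emptyset$, which is precisely where $S_{1_A},S_{1_B}\subseteq\bar B(0,\tfrac12)$ is used. You have the second ingredient in hand but have folded it into the wrong step; as written, your plan suggests applying Theorem~\ref{thm:gap lemma Rd} to $C$ against $\mu A+(1-\mu)B$, which would require a system of balls and a thickness bound for the sumset that you have not (and cannot easily) supply.
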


In other words, given any 3-point set $\mathcal{T}$, any set $C$ satisfying the hypotheses contains a similar copy of $\mathcal{T}$. 
A key tool in the proof is the higher gap lemma due to Yavicoli (see Theorem \ref{thm:gap lemma Rd}); the hypothesis that
 $r\in (0,\frac12)$ is an assumption of the Gap lemma.

\vskip.125in

\begin{remark}
    We suspect this holds in higher dimensions, but there are technical complexities that arise.
\end{remark}

For equilateral triangles, $\lambda =\frac12$ and $\alpha = \frac{\sqrt{3}}{2}$, and the thickness assumption is simplified so that we have the following.

\begin{corollary}[Equilateral triangles in $\R^2$]\label{cor:equilateral triangle}
        Let $\mathcal{T}$ detote the vertices of an equilateral triangle. 
    Let $C\subset\R^2$ be a compact set generated by the system of balls $\{S_I\}$ in the Euclidean norm such that $C$ is $r$-uniformly dense for some $0<r<\frac{1}{2}$. 
    Suppose there exists first-generation children $S_{1_A}$ and $S_{1_B}$, $1\leq 1_A<1_B\leq k_\emptyset$, contained in $\bar{B}\lp0,\frac{1}{2}\rp$ such that $S_{1_A}$ and $S_{1_B}$ are disjoint from all other 
    first generation
    children. 
    Further, suppose 
    $$\tau\lp C,\{S_I\}\rp\geq  \frac{2}{1-2r}, $$ then $C$ contains the vertices of a similar copy of $\mathcal{T}$. 
\end{corollary}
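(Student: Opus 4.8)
The plan is to obtain Corollary \ref{cor:equilateral triangle} as a direct specialization of Theorem \ref{thm: triangles no Cartesian} to the case of an equilateral triangle, so that the only real work is to compute the relevant normalization parameters $(\alpha,\lambda)$ and to check that the general thickness bound collapses to the clean form $\tau\lp C,\{S_I\}\rp \geq \frac{2}{1-2r}$.

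First I would identify the parameters $(\alpha,\lambda)$ attached to an equilateral triangle via Lemma \ref{lem:triangle Talpha,lambda}. Since all angles of an equilateral triangle are equal, any vertex may serve as $z$; the altitude from $z$ meets the opposite side (normalized to length $1$) at its midpoint, so $\lambda = 1-\lambda = \frac12$, and the height of a unit-side equilateral triangle is $\alpha = \frac{\sqrt3}{2}$. I would then verify that $(\alpha,\lambda) = \lp \frac{\sqrt3}{2}, \frac12 \rp$ lies in the region $\mathcal{R}$ of Lemma \ref{lem:triangle Talpha,lambda}: indeed $\alpha > 0$, $0 \le \lambda \le \frac12$, and $\alpha^2 + (1-\lambda)^2 = \frac34 + \frac14 = 1 \le 1$, the boundary case, which is permitted because the defining inequality in $\mathcal{R}$ is non-strict.

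Next I would substitute these values into the thickness hypothesis of Theorem \ref{thm: triangles no Cartesian}. Since $\lambda = \frac12$ forces $(1-\lambda)^2 = \lambda^2$, the factor under the square root is
\[
\sqrt{\frac{\alpha^2 + (1-\lambda)^2}{\alpha^2 + \lambda^2}} = \sqrt{\frac{\alpha^2 + \tfrac14}{\alpha^2 + \tfrac14}} = 1,
\]
so the general bound $\tau\lp C,\{S_I\}\rp \geq \sqrt{\frac{\alpha^2+(1-\lambda)^2}{\alpha^2+\lambda^2}}\cdot\frac{2}{1-2r}$ reduces exactly to $\tau\lp C,\{S_I\}\rp \geq \frac{2}{1-2r}$, which is the hypothesis of the corollary. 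All remaining hypotheses of Theorem \ref{thm: triangles no Cartesian}—that $C$ is $r$-uniformly dense for some $0<r<\frac12$, and that there exist distinct first-generation children $S_{1_A}, S_{1_B}$ contained in $\bar B\lp 0,\frac12\rp$ and disjoint from all other first-generation children—are assumed verbatim in the corollary. Applying Theorem \ref{thm: triangles no Cartesian} with $\mathcal{T}$ equilateral and this choice of $(\alpha,\lambda)$ then gives that $C$ contains the vertices of a similar copy of $\mathcal{T}$, completing the proof.

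I expect no genuine obstacle here: the argument is a bookkeeping reduction. The only points requiring a moment's care are confirming that the equilateral triangle sits on the boundary $\alpha^2 + (1-\lambda)^2 = 1$ of $\mathcal{R}$ (rather than outside it), and tracking that the square-root factor equals exactly $1$, so that the stated bound is the sharp specialization of Theorem \ref{thm: triangles no Cartesian} in this case.
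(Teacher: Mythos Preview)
Your proposal is correct and matches the paper's approach exactly: the paper obtains the corollary by noting that for an equilateral triangle $\lambda=\tfrac12$ and $\alpha=\tfrac{\sqrt3}{2}$, so the thickness hypothesis of Theorem~\ref{thm: triangles no Cartesian} simplifies to $\tau\lp C,\{S_I\}\rp\geq \tfrac{2}{1-2r}$. Your additional checks (that $(\alpha,\lambda)\in\mathcal{R}$ on the boundary and that the square-root factor equals $1$) are the right details to record.
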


\begin{remark}
    Above, we assume that $S_{1_A}$, $S_{1_B}$ are contained in $\bar{B}\lp 0,\frac{1}{2}\rp$, but this is not optimal. 
    In the proof, we will show that taking $S_{1_A}$, $S_{1_B}$ in the larger ball, $\bar{B}\lp 0, \frac{1}{2}+t_1-\frac{h_\emptyset(C) x}{2s_f}\rp$, where the variables $t_1, h_\emptyset(C)$ and  $s_f$ are defined in the proof, and  
    $x=\max\left\{\frac{2r}{1-2r},0\right\}$, is sufficient. 
\end{remark}

Before, to guarantee the occurrence of a $3$-AP, we needed $C\cap \left(\frac{A+B}{2}\right)\neq \emptyset$ for $A, B$ disjoint subsets of $C$. 
Now, to guarantee the occurrence of the vertices of an equilateral triangle, we need  $C\cap \left(H(A,B)\right)\neq \emptyset$, 
where $H: \R^2\times \R^2 \rightarrow \R^2$ is defined by $H(a,b) = \frac{a+b}{2} + \frac{\sqrt{3}}{2}(b-a)^{\perp}$. 
This ensures that there's some point $a\in A$, $b\in B$ forming the base of our equilateral triangle and some point $c\in C\cap H(A,B)$ as the top vertex. 
The details are found in \S \ref{sec:tri}.

\begin{remark}
Theorem \ref{thm: triangles no Cartesian} offers a significant improvement over the following result of Yavicoli in the specific setting of triangles in the plane by lowering the required thickness threshold; however, for values of $\alpha$ and $\lambda$ significantly close to $0$, Yavicoli's result requires less thickness.
Yavicoli proved that compact sets in $\R^d$ generated by a restricted system of balls in the infinity norm with significantly large thickness contain homothetic copies of finite sets \cite{Yavicoli_Gap_Lemma_Rd}. 
In particular, 
    let $C\subset \R^d$ be a compact set with disjoint children. 
    Take also constraints on the number of children $N_0$ and the radii of the children.
    Then $C$ contains a homothetic copy of every set with at most 
    \begin{equation*}
        N(\tau):= \left\lfloor\frac{3}{4eK_2}\frac{\tau}{\log\tau}\right\rfloor
    \end{equation*}
    elements 
    where $K_2$ is a large constant dependent on $N_0$.
In fact, we can take the conservative estimate of $K_2=360,000$ which means 
we would need a thickness strictly greater than $10^7$ to guarantee the existence of any $3$-point configuration.
\vskip.125in
\end{remark}

%
%

\subsection{Organization}
In Section \ref{sec:thickness in R^d}, 
we introduce systems of balls for compact sets, define $r$-uniformity, and introduce Yavicoli's higher-dimensional thickness and gap lemma. We also discuss some relevant properties of this notion of thickness, including its behavior under taking subsets. 
In  Section \ref{sec:examples}, we give some examples.
Section \ref{proof section newhouse} contains the proofs of the results of \S \ref{sec: Newhouse results} that rely on Newhouse thickness, and the proofs of the results in \S \ref{sec: Yavi results} that rely on Yavicoli thickness appear in Section \ref{sec:proof(s)}.


\section{Yavicoli Thickness in $\R^d$}\label{sec:thickness in R^d}

In this section, we review the definitions and theorems related to thickness in $\R^d$ as introduced by Yavicoli \cite{Yavicoli_Gap_Lemma_Rd}, and we present the lemmas used in the proofs of Theorems \ref{thm: higher dim convex combo} and \ref{thm: triangles no Cartesian}. We begin with an observation about compact sets and the definition of a system of balls.

\begin{definition}(Compact Sets and systems of balls, \cite{Yavicoli_Gap_Lemma_Rd})\label{def:compact sets}
    Given a word $I$ (i.e. a finite or infinite), we denote by $\ell(I)\in \N_0$ the length of $I$. 
 Observe that any 
    compact set can be written as
    \begin{equation*}
        C=\bigcap_{n\in\N_0}\bigcup_{\ell(I)=n} S_I,
    \end{equation*}
    where 
    \begin{itemize}
        \item each $S_I$ is a closed ball in the norm $\|\cdot\|_\infty$ or $ \|\cdot\|_2$
        and
        contains $\{S_{I,j}\}_{1\leq j\leq k_I}$, for $k_I \in \N$; 
        (No assumptions are made on the separation of the $S_{I,j}$). 
        \item for every infinite word $i_1,i_2,\cdots$ of indices of the construction, 
        \begin{equation*}
            \lim_{n\rightarrow+\infty} \rad S_{i_1,i_2,\cdots,i_n}=0;
        \end{equation*}
        \item 
        for every word $I$, $S_I \cap C \neq \emptyset$. 
    \end{itemize}
       We use the notation $C\subset S_\emptyset = S_0$ and $k_\emptyset = k_0 \in \N$. 
    In this case we say that $C$ is generated by the \textbf{system of balls} $\{S_I\}_I$, or that 
    $\{S_I\}_I$ is a \textbf{system of balls} for $C$.
 \end{definition}

When considering thickness in higher dimensions, we no longer have interval bridges and gaps as we did in $\R$. 
Instead, given a compact set $C\subset \R^d$ and a system of balls $\{S_I\}_I$, and given a fixed level (or generation) $n$ in the construction, we fix a parent square $S_I$. 
We then consider the ratio between two quantities:  
the minimum radius over the children balls $\{S_{I,i}\}$ and 
the radius of the largest disc that fits in $S_I$ and avoids the set $C$ (call this quantity $h_I(C)$). 
  Taking an infimum over all parents at level $n$, and then taking an infimum over all generations $n\geq 0$ gives a higher dimensional notion of thickness.

\begin{definition}{(Thickness of $C$ associated to the system of balls $\{S_I\}_I$, \cite{Yavicoli_Gap_Lemma_Rd})}\label{def:thickness}
\begin{equation}\label{eq: thickness}
    \tau\lp C,\{S_I\}_I\rp:=\inf_{n\geq 0}\inf_{\ell(I)=n}\frac{\min_i \rad(S_{I,i})}{h_I(C)}
\end{equation}
where 
\begin{equation}\label{defn: h_I}
    h_I(C):= \max_{x\in S_I} \dist(x,C).
\end{equation}
\end{definition}

Note that $h_I(C)$ is geometrically interpreted to be minimal so that any ball of radius $h_I(C)$ or larger in $S_I$ must contain a point of $C$ for a fixed word $I$.

\begin{remark}
    The system of balls $\{S_I\}$ is included as a parameter in the definition of thickness because both the numerator $\min_i \rad(S_{I,i})$ and denominator $h_I(C)$ are dependent upon the system of balls used to describe the compact set. 
    Let us examine two examples that illustrate this dependence. 
    \vskip.125in

    First, recall that any compact set $C$ in $\bar{B}(0,1)$ can be generated by a system of balls constructed by using a system of dyadic squares. 
    For example, in $\R^2$ we could start with $\bar{B}(0,1)$, then partition $\bar{B}(0,1)$ into four parts by $\bar{B}\lp (-\frac{1}{2},\frac{1}{2}), \frac{1}{2}\rp$, $\bar{B}\lp (\frac{1}{2},\frac{1}{2}), \frac{1}{2}\rp$, $\bar{B}\lp (-\frac{1}{2},\frac{1}{2}), -\frac{1}{2}\rp$, and $\bar{B}\lp (\frac{1}{2},-\frac{1}{2}), \frac{1}{2}\rp$, and partition each $\bar{B}\lp (\pm \frac{1}{2},\pm \frac{1}{2}), \frac{1}{2}\rp$ into four parts, and so on.
    If a dyadic square intersects $C$, include it in the system of balls $\{S_I\}$; otherwise, exclude it. 
    Notice that this means that each $S_I$ has radius $\frac{1}{2^{\ell(I)}}$ with $k_I$ children where $0\leq k_I\leq 4$. 
    Such a system $\{S_I\}$ will necessarily generate any compact set $C\subset \bar{B}(0,1)$. 
    However, if $C$ is not the entire compact ball, then any $C$ generated by these dyadic balls will always have thickness at most $1/2$, as at some point in the construction we will have some $S_J$ which does not contain an element of $C$, so $h_J(C) \geq \frac{1}{2^{\ell(J)}}$.
    Then
    $$\tau\lp C, \{S_I\}\rp = \inf_{n\geq0}\inf_{\ell(I)=n} \frac{\min_i \rad(S_{I,i})}{\max_{x\in S_I}\dist(x,C)} \leq \frac{1/2^{\ell(J)+1}}{1/2^{\ell(J)}} = \frac{1}{2}.$$
    Hence, we can artificially force any compact set to have artificially small thickness. 
    This illustrates that when constructing a system of balls $\{S_I\}_I$ for a compact set $C$ with thickness larger than $1$ we need to choose the balls in such a way that the smallest radius is larger than the largest distance to $C$.
    \vskip.125in

    Second, we recall an example from Yavicoli's \cite{Yavicoli_Survey}, which considers the singleton set $\{0\}\subset \R^d$. 
    Intuitively, the thickness of a singleton point should be $0$.
    However, if we took the nested system of balls $\{S_{I_n}\}=\left\{\bar{B}\lp 0,\frac{1}{n}\rp\right\}_{n\geq1}$, then
    $$\tau\lp \{0\},\{S_{I_n}\}\rp=\inf_{n\geq1}\inf_{\ell(I)=n} \frac{\min_i \rad(S_{I,i})}{\max_{x\in S_I}\dist(x,C)} = \inf_{n\geq1} \frac{1/(n+1)}{1/n} = \frac{1}{2}.$$
    Including the assumption that our compact sets be $r$-uniform, defined below, minimizes the frequency of such examples.
    This condition is similar to the condition that Biebler \cite{biebler_complex_gap_lemma} needed to ensure that dynamical Cantor sets were ``well-balanced," which prevents compact sets from having artificially large thickness and forces the points of the compact set to be spread out ``uniformly.''
    Please note that this uniformity is not a requirement for the one-dimensional Gap Lemma; e.g., consider the middle-third Cantor set.
    \vskip.125in
\end{remark}

\begin{definition}{($r$-uniformity, \cite{Yavicoli_Gap_Lemma_Rd})}\label{def:r-uniformity}
    Given $\{S_I\}_I$ a system of balls for a compact set $C$, we say that $\{S_I\}_I$ is $r$-uniformly dense if for every word $I$, for every ball $B\subseteq S_I$ with $\rad(B)\geq r\,\rad(S_I)$, there is a child $S_{I,i}\subset B$. We say a compact set $C$ is $r$-uniformly dense if such a system exists. 
\end{definition}

We now introduce the higher-dimensional Gap Lemma which will be a key tool used in Section \ref{sec:proof(s)}.

\begin{theorem}\label{thm:gap}{(Gap Lemma, \cite{Yavicoli_Gap_Lemma_Rd})}\label{thm:gap lemma Rd}
    Let $C^1$ and $C^2$ be two compact sets in $(\R^d,\dist)$, generated by systems of balls $\{S_I^1\}_I$ and $\{S_L^2\}_L$ respectively, and fix $r\in\lp0,\frac{1}{2}\rp$. 
    Assume:
    \begin{enumerate}[label=(\roman*)]
        \item $\tau\lp C^1,\{S_I^1\}_I\rp\tau\lp C^2,\{S_L^2\}_L\rp \geq \frac{1}{(1-2r)^2},$
        \item $C^1\cap (1-2r)\cdot S_\emptyset^2\neq \emptyset$,
        \item $\rad(S_\emptyset^1)\geq r\, \rad(S_\emptyset^2)$, 
        \item $\{S_I^1\}_I$ and $\{S_L^2\}_L$ are $r$-uniformly dense.
    \end{enumerate}
    Then $C^1\cap C^2 \neq \emptyset$.
\end{theorem}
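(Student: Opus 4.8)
The plan is to generalize the nested‑interval descent behind Newhouse's one‑dimensional Gap Lemma to the language of systems of balls. The dictionary is: a bridge becomes a descendant ball $S_I$; the ``size of a gap adjacent to a bridge'' becomes the quantity $h_I(\cdot)$ from Definition \ref{def:thickness}; and the statement ``every sufficiently long subinterval of a bridge contains a sub‑bridge'' becomes $r$-uniformity (Definition \ref{def:r-uniformity}). I would build recursively a decreasing sequence of pairs $(S^1_{I_n},S^2_{L_n})$, where $S^i_\cdot$ is a descendant ball in the system for $C^i$, maintaining the invariant that the two balls are \emph{linked}: they have comparable radii (within a factor $r$, extending hypothesis (iii)), each of $C^1,C^2$ reaches into a $(1-2r)$-dilate of the other's ball (extending hypothesis (ii)), and $S^1_{I_n}\cup S^2_{L_n}\subseteq S^1_{I_{n-1}}\cup S^2_{L_{n-1}}$ with diameter shrinking to $0$. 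Hypotheses (ii) and (iii) seed the induction; the recursion is driven by (i) and (iv). Once the sequence is built, since $S^i_J\cap C^i\neq\emptyset$ for every word $J$ and the diameters tend to $0$, the nested intersection $\bigcap_n\lp S^1_{I_n}\cup S^2_{L_n}\rp$ is a single point lying in both $C^1$ and $C^2$, which is the conclusion.

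For the recursion step, suppose $(S^1_I,S^2_L)$ is linked with, say, $\rad(S^2_L)\ge\rad(S^1_I)$, so we refine on the $C^2$ side. Pick $z\in C^1\cap(1-2r)S^2_L$. Since $z$ lies in the $(1-2r)$-dilate, the ball $B$ of radius $r\,\rad(S^2_L)$ centered at $z$ is contained in $S^2_L$, so by $r$-uniformity (iv) it contains a child $S^2_{L,j}=:S^2_{L'}$; this gives $S^2_{L'}\subseteq S^2_L$, $\rad(S^2_{L'})\le r\,\rad(S^2_L)$, and $S^2_{L'}$ within $r\,\rad(S^2_L)$ of a point of $C^1$. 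It remains to verify the invariant for $(S^1_{I'},S^2_{L'})$ with an appropriate descendant $S^1_{I'}$ of $S^1_I$: one still has a point of $C^2$ deep in $S^1_I$ (this half of the invariant did not involve the refined set), and one must produce a point of $C^1$ deep in $(1-2r)S^2_{L'}$ — this is exactly where hypothesis (i) enters. The defining property of $h$ guarantees that any ball of radius $\ge h_I(C^1)$ lying in $S^1_I$ meets $C^1$, while the thickness bound gives $\rad(S^1_{I,i})\ge\tau\lp C^1,\{S^1_I\}\rp h_I(C^1)$ for each child, and similarly on the $C^2$ side; chaining these two estimates — and paying a factor $1-2r$ for each of the two dilations that appear — is what forces the hypothesis to take the form $\tau(C^1)\tau(C^2)\ge(1-2r)^{-2}$ and is what closes the step, with the ``contained'' and ``containing'' roles of $C^1$ and $C^2$ exchanged for the next iteration.

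The main obstacle is precisely this bookkeeping: pinning down the right notion of ``linked'' for balls (comparable radii, substantial overlap, neither ball buried in the part of the other ball that the corresponding set misses), and checking that a single refinement restores \emph{all} parts of it at once — the overlap, the two‑sided radius comparison, the geometric shrinking of the diameter, and the clean swap of roles — with constants matching $(1-2r)^{-2}$ rather than something weaker. A further technical point, present throughout, is that ``balls'' are taken in either $\|\cdot\|_2$ or $\|\cdot\|_\infty$, so every geometric claim used above — ``a ball of radius $r\,\rad(S)$ centered at a point of $(1-2r)S$ lies in $S$,'' ``two linked balls share a sub‑ball of radius comparable to the smaller radius,'' and the interaction of dilations with $h$ — must be checked uniformly in the norm; these are elementary but must be done carefully once the constants are fixed. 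Finally, one checks that $\bigcap_n\lp S^1_{I_n}\cup S^2_{L_n}\rp$ genuinely collapses to a point: since each ball at step $n$ is a descendant of the corresponding ball at step $n-1$ (strictly on whichever side is refined) and one always refines the larger of the two, both systems descend to arbitrary depth, so both radii — hence the diameter — tend to $0$ by the defining property of a system of balls.
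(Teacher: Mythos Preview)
The paper does not prove this theorem: it is quoted verbatim from Yavicoli \cite{Yavicoli_Gap_Lemma_Rd} and used as a black box in the proofs of Theorems \ref{thm: higher dim convex combo} and \ref{thm: triangles no Cartesian}. So there is no ``paper's own proof'' to compare your proposal against.

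That said, your sketch is the right shape and matches the strategy of Yavicoli's original argument: build a nested sequence of linked pairs $(S^1_{I_n},S^2_{L_n})$, refine the larger side using $r$-uniformity to find a child near a point of the other set, and use the thickness product to guarantee that the refined pair is again linked. The identification of ``bridge $\leftrightarrow$ descendant ball'' and ``gap size $\leftrightarrow h_I$'' is exactly the translation Yavicoli makes. One point where your writeup is slightly loose: after picking the child $S^2_{L'}$ near $z\in C^1$, you do not yet have $C^2\cap(1-2r)\cdot S^1_{I'}\neq\emptyset$ for an appropriate descendant $S^1_{I'}$ --- you only have a point of $C^2$ somewhere in $S^2_{L'}$, which is close to $z\in C^1$ but need not land in the dilate of any particular child of $S^1_I$. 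Closing this requires a second application of the same mechanism (now with roles swapped) together with the bound $h_L(C^2)\le \rad(S^2_{L,j})/\tau(C^2)$, and it is in stitching these two refinements together that the product $\tau(C^1)\tau(C^2)$ and the two factors of $(1-2r)$ appear simultaneously; your paragraph gestures at this but does not quite carry it out. This is the bookkeeping you flag as the ``main obstacle,'' and it is indeed where the actual work lies.
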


\begin{remark}
While there are other higher dimensional notions of thickness, see for instance \cite{biebler_complex_gap_lemma, falconer2022intersections}, we choose to use Yavicoli's higher dimensional notion of thickness as it is simpler to construct subsets $A$, $B$ of $C$ with thickness comparable to $C$.
\end{remark}

\subsection{Computing the thickness of a subset}\label{sec: thickness of a subset}
We now consider how to compute the thickness of a subset of $C$ given the thickness of $C$. 

Let $C$ be a compact set with a system of balls $\{S_I\}_I$, 
and let $A:= S_{1_A}\cap C$ for some $1\leq 1_A\leq k_\emptyset$ be a compact set with a system of balls $\{S_{1_A,I}\}_I$. 
\vskip.125in

While the definition of $h_I(C):=\max_{x\in S_I} \dist(x,C)$ is used in calculating the thickness of $C$, when we consider the thickness of first generation subsets of the form $A=C \cap S_{1_A}$ for some $1_A$ satisfying $1\leq 1_A\leq k_\emptyset$, 
we need $h_{1_A}(A):=\max_{x\in S_{1_A,I}}\dist(x,A)$ to calculate the thickness of $A$:
\begin{equation*}
    \tau\lp A,\{S_{1_A,I}\}_I\rp:=\inf_{n\in\mathbb{N}_0}\inf_{\ell(I)=n}\frac{\min_i \rad(S_{1_A,I,i})}{\max_{x\in S_{1_A,I}}\dist(x,A)}.
\end{equation*}
\vskip.125in

In the proof of Theorem \ref{thm: higher dim convex combo}, we have implicit assumptions about $\max_{x\in S_I}\dist(x,C)$ 
but no assumptions about $\max_{x\in S_{1_A,I}}\dist(x,A)$, so we use $\max_{x\in S_{1_A}}\dist(x,C)$ to get an upper bound on $\max_{x\in S_{1_A}}\dist(x,A)$ in Lemma \ref{lem:dist(A)<2dist(C)}.
%
%
As in \eqref{defn: h_I}, define 
\begin{equation}
    h_\emptyset(C) := \max_{x\in S_\emptyset} \dist(x,C)
     \quad \text{and} 
    \quad h_{1_A}(A):= \max_{x\in S_{1_A}} \dist(x,A) = \max_{x\in S_{1_A}} \dist(x,S_{1_A}\cap C).
\end{equation}
\vskip.125in

\begin{lemma}[Preliminary computation for the thickness of a subset]\label{lem:dist(A)<2dist(C)}
    Let $C$ be a compact set in ($\R^d,\dist$) generated by the system of balls $\{S_I\}_I$ such that $\tau\lp C,\{S_I\}\rp\geq1$.
    Then for any word $I$ we have
    \begin{equation*}
        \max_{x\in S_I} \dist(x,S_I\cap C) 
        \,\, \leq \,\,
        2\max_{x\in S_I} \dist(x,C).
    \end{equation*}
    \end{lemma}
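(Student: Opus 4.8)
The plan is to prove the pointwise statement: for the fixed word $I$, writing $h := h_I(C) = \max_{x\in S_I}\dist(x,C)$ and $S_I = \bar{B}(p,\rho)$, every $x\in S_I$ admits a point of $S_I\cap C$ within distance $2h$ of it. Taking the maximum over $x\in S_I$ then yields the claimed inequality. If $h=0$, then $\dist(x,C)=0$ for all $x\in S_I$, so $S_I\subseteq C$ by compactness of $C$, and both sides of the asserted inequality vanish; hence we may assume $h>0$.

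The first step is to record that $\rho\geq h$. Since $\tau(C,\{S_I\})\geq 1$, the defining infimum gives $\min_i\rad(S_{I,i})\geq h_I(C)=h$, and each child $S_{I,i}$ is contained in $S_I$; a ball of radius at least $h$ contained in a ball of radius $\rho$ forces $\rho\geq h$ (this uses only the triangle inequality, so it is valid for $\dist=\|\cdot\|_\infty$ or $\|\cdot\|_2$).

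The second step is the ``inward pull.'' Given $x\in S_I$, set $x^{*}:=x$ if $\|x-p\|\leq\rho-h$, and otherwise $x^{*}:=p+(\rho-h)\tfrac{x-p}{\|x-p\|}$; this is well-defined precisely because $\rho-h\geq 0$. Two elementary facts follow from the triangle inequality alone (hence for either norm): $\|x-x^{*}\|\leq h$, and $\bar{B}(x^{*},h)\subseteq\bar{B}(p,\rho)=S_I$, the latter because any $y$ with $\|y-x^{*}\|\leq h$ satisfies $\|y-p\|\leq h+\|x^{*}-p\|\leq h+(\rho-h)=\rho$. Since $x^{*}\in S_I$, the definition of $h$ gives $\dist(x^{*},C)\leq h$, so there is $c\in C$ with $\|x^{*}-c\|\leq h$; by the containment just noted, $c\in\bar{B}(x^{*},h)\subseteq S_I$, so $c\in S_I\cap C$. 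Therefore $\dist(x,S_I\cap C)\leq\|x-c\|\leq\|x-x^{*}\|+\|x^{*}-c\|\leq 2h$, and maximizing over $x\in S_I$ finishes the argument.

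The only real subtlety — and the reason $2$ (rather than $1$) is the correct constant — is that the point of $C$ realizing $\dist(x,C)$ may lie outside $S_I$ when $x$ is near the boundary sphere of $S_I$. The inward-pulling step is exactly what guarantees that the nearby point of $C$ we extract is forced to lie inside $S_I$, and the bound $\rho\geq h$ obtained from $\tau(C,\{S_I\})\geq 1$ is what makes $x^{*}$ both well-defined and within distance $h$ of $x$. I expect this to be the main (and only) obstacle; everything else is triangle-inequality bookkeeping.
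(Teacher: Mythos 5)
Your proof is correct and follows essentially the same approach as the paper: both deduce $\rad(S_I)\geq h_I$ from the thickness hypothesis, use this to produce an auxiliary point (your $x^{*}$, the paper's $z$) within distance $h_I$ of the given $x$ but at distance at least $h_I$ from $\partial S_I$, and then finish with the triangle inequality. Your write-up is somewhat more explicit in constructing $x^{*}$ and in dispatching the degenerate $h=0$ case, but it is the same argument.
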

\bigskip
So, if $A = S_{1_A}\cap C$ for some $1\le 1_A \le k_\emptyset$, 
Lemma \ref{lem:dist(A)<2dist(C)} implies that 
   \begin{equation*}
        \max_{x\in S_{1_A}} \dist(x,S_{1_A}\cap C) 
        \,\, \leq \,\,
        2\max_{x\in S_{1_A}} \dist(x,C)
       \,\, \leq \,\, 
       2\max_{x\in S_\emptyset} \dist(x,C),
    \end{equation*}
and it follows that 
    \begin{equation*}
h_{1_A}(A) \le 2h_\emptyset(C).
\end{equation*}

\begin{proof}  
Fix a word $I$.
Since $\tau\lp C,\{S_I\}\rp\geq 1$ for all words $I$, it follows from the definition of thickness that 
$$\min_i \rad(S_{I,i})\geq \max_{x\in S_I}\dist(x,C) = h_I.$$
In particular, 
$$\rad(S_I)\geq \min_i \rad(S_{I,i}) \geq h_I.$$
This establishes that, for any $y\in S_I$, there exists a ball of radius $h_I$ in $S_I$ containing $y$. 
\vskip.125in

Now take any $y\in S_I$, and
observe that
$$ \dist(y,C\cap S_I)\leq \dist(y,c'),$$
for any $c'\in C\cap S_I$.
We will choose $c'$ in such a way that we can bound $\max_{x\in S_I}\dist(x,C\cap S_I)$. 
\vskip.125in

Let $\bar{B}_y$ be a closed ball of radius $h_I$ in $S_I$ containing the point $y$. 
Then there exists some point $z \in \bar{B}_y\subset S_I$ such that 
\begin{equation}
    \dist(y,z)\leq h_I \quad \text{and}\quad \dist(z,\partial S_I)\geq h_I. 
\end{equation}
For instance, $z$ can be taken as the center of $\bar{B}_y$. 
As a consequence of the latter inequality above combined with the definition of $h_I: = \max_{x\in S_I}\dist(x,C)$, there exists some $c'\in C \cap S_I$ such that
\begin{equation}
    \dist(z,c')\leq h_I. 
\end{equation}
Now, we have 
\begin{equation}
    \dist(y,C\cap S_I) \leq \dist(y,c') \leq \dist(y,z) + \dist(z,c') \leq 2h_I. 
\end{equation}
As this holds for any $y\in S_I$, we have
\begin{equation*}
    \max_{y\in S_I}\dist(y,C \cap S_I) \leq 2 h_I.
\end{equation*} 
\end{proof}

Now that we understand the relationship in Lemma \ref{lem:dist(A)<2dist(C)}, we can use it to calculate the relationship between the thicknesses of $C$ and its subsets.

\begin{lemma}[Thickness of a subset]\label{lem:tau(A)>1/2tau(C)}
    Let $C$ be a compact set in $(\R^d,\dist)$ generated by the system of balls $\{S_I\}_I$ such that $\tau\lp C,\{S_I\}\rp\geq 1$.
    Suppose that there exists some $1\leq 1_A\leq k_\emptyset$ such that
    $S_{1_A}\cap S_i =\emptyset$
    for all $i\neq 1_A$, $1\le i\le k_\emptyset$.
    Let $A= S_{1_A}\cap C$. 
    Then $$\tau\lp A,\{S_{1_A,I}\}\rp \geq \frac{1}{2}\tau\lp C,\{S_I\}\rp.$$
\end{lemma}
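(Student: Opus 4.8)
The plan is to compare the defining quotients of $\tau(A,\{S_{1_A,I}\})$ and $\tau(C,\{S_I\})$ generation by generation. Fix a word $I$ (so that $S_{1_A,I}$ is a ball in the system for $A$). In the quotient defining $\tau(A,\{S_{1_A,I}\})$ the numerator $\min_i \rad(S_{1_A,I,i})$ is \emph{identical} to the corresponding numerator for $C$, since the children of $S_{1_A,I}$ as a node of the system of balls for $A$ are exactly the children of that same node in the system for $C$ — the ball $S_{1_A}$ was chosen disjoint from all other first-generation children, so passing to $A=S_{1_A}\cap C$ does not prune or alter any node below $S_{1_A}$. Thus the only thing that changes between the two thicknesses is the denominator: $h_{1_A,I}(A):=\max_{x\in S_{1_A,I}}\dist(x,A)$ replaces $h_{1_A,I}(C):=\max_{x\in S_{1_A,I}}\dist(x,C)$.

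So the whole lemma reduces to the inequality $h_{1_A,I}(A)\le 2\,h_{1_A,I}(C)$ for every word $I$, i.e. that deleting the part of $C$ outside $S_{1_A}$ at most doubles the "largest hole" inside each descendant ball $S_{1_A,I}$. This is precisely Lemma \ref{lem:dist(A)<2dist(C)} applied with the word "$1_A,I$": it gives $\max_{x\in S_{1_A,I}}\dist(x,S_{1_A,I}\cap C)\le 2\max_{x\in S_{1_A,I}}\dist(x,C)$, and since $S_{1_A,I}\subseteq S_{1_A}$ we have $S_{1_A,I}\cap C = S_{1_A,I}\cap A$, so the left side is exactly $h_{1_A,I}(A)$. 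Here the hypothesis $\tau(C,\{S_I\})\ge 1$ is what licenses the use of Lemma \ref{lem:dist(A)<2dist(C)}.

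Combining: for each word $I$,
\begin{equation*}
\frac{\min_i \rad(S_{1_A,I,i})}{h_{1_A,I}(A)} \;\ge\; \frac{\min_i \rad(S_{1_A,I,i})}{2\,h_{1_A,I}(C)} \;=\; \frac12\cdot\frac{\min_i \rad(S_{1_A,I,i})}{h_{1_A,I}(C)} \;\ge\; \frac12\,\tau\lp C,\{S_I\}\rp,
\end{equation*}
where the last inequality holds because $\frac{\min_i \rad(S_{1_A,I,i})}{h_{1_A,I}(C)}$ is one of the quotients over which the infimum defining $\tau(C,\{S_I\})$ is taken (namely at the node $1_A,I$). Taking the infimum over all words $I$ on the left gives $\tau(A,\{S_{1_A,I}\})\ge \frac12\tau(C,\{S_I\})$, as claimed.

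The only genuinely substantive point — and the step I would present most carefully — is the observation that the numerators agree and that $S_{1_A,I}\cap C=S_{1_A,I}\cap A$; this is where the disjointness hypothesis $S_{1_A}\cap S_i=\emptyset$ for $i\ne 1_A$ is used, ensuring that no descendant ball of $S_{1_A}$ secretly contains points of $C$ coming from a sibling branch. Everything else is a direct appeal to Lemma \ref{lem:dist(A)<2dist(C)} and the definition of thickness; there is no real obstacle beyond bookkeeping with the indexing of words.
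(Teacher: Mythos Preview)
Your proposal is correct and follows essentially the same approach as the paper: both apply Lemma~\ref{lem:dist(A)<2dist(C)} at each node $S_{1_A,I}$ to bound the denominator by a factor of $2$, observe that the numerators are unchanged, and then pass to the infimum. One small imprecision: you write that $\max_{x\in S_{1_A,I}}\dist(x,S_{1_A,I}\cap C)$ ``is exactly $h_{1_A,I}(A)$,'' but in fact $h_{1_A,I}(A)=\max_{x\in S_{1_A,I}}\dist(x,A)$ is only \emph{bounded above} by that quantity (since $S_{1_A,I}\cap C\subseteq A$); this inequality goes the right way, so the argument is unaffected.
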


\begin{remark}\label{rmk:disjoint assumption}
    We comment on the assumption that $S_{1_A}$ and $S_{1_B}$ are disjoint from all other first-generation children. 
    Let $C$ be a compact set generated by $\{S_I\}_I$ and take any $S_{1_A}$ a first-generation child, not necessarily disjoint from other first-generation children.
    Consider the following two subsets constructed by the first-generation child $S_{1_A}$ of $C$:
    $A'$ generated by the system of balls $\{S_{1_A,I}\}_{1_A,I}$ and $A := S_{1_A}\cap C$.
    We necessarily have $A'\subseteq A \subseteq C$. 

    To calculate the Yavicoli thickness of a compact set $E$, we need: $(1)$ a system of balls that generates $E$ and $(2)$ the value of $\max_{x\in S_{I}}\dist(x,E)$ for all words $I$. 
    In particular, we want to calculate the thickness of a subset of a compact set, so we additionally need $(1')$ a system of balls that generates the subset and relates to the system that generates $C$ and $(2')$ the value of $\max_{x\in S_{I}}\dist(x,E)$ compared to $\max_{x\in S_I} \dist(x,C)$.
    For $A$, we have $(2')$ as we can use Lemma \ref{lem:dist(A)<2dist(C)} to obtain the estimate $$\max_{x\in S_{1_A}}\dist(x,A) \leq 2 \max_{x\in S_{1_A}}\dist(x,C).$$
    While the compact set $A'$ generated by $\{S_{1_A,I}\}$ is contained in $A$, if $S_{1_A}$ is not disjoint from other first-generation children it is possible we have some point $x\in S_i\cap S_{1_A}$ that is not generated by $\{S_{1_A,I}\}$, and it becomes hard to see if $A$ satisfies $(1')$.
    For $A'$, we have $(1')$ because $A'$ is generated by $\{S_{1_A,I}\}$, but it does not necessarily satisfy $(2')$ as we have no way to calculate or bound $\max_{x\in S_{I}}\dist(x,A')$ above; in general,
    $$\max_{x\in S_I}\dist(x,A')\geq \max_{x\in S_I}\dist(x,A).$$
    In order to guarantee the existence of a set that satisfies both $(1')$ and $(2')$, we take $S_{1_A}$ to be disjoint from all other first-generation children. 
    This forces $A=A'$, so $(1')$ and $(2')$ are both satisfied.
    We first need a corresponding system of balls that generate the set. 
    In the case of $A=S_{1_A}\cap C$, the system of balls $\{S_{1_A,I}\}$ generates the set $A=S_{1_A}\cap C$ if and only if all elements of $S_{1_A}\cap C$ are generated by $\{S_{1_A,I}\}$.
    This is satisfied by having $S_{1_A}$ disjoint from all other first-generation children $S_i$ where $1\leq i\leq k_\emptyset$, $i\neq 1_A$.
\end{remark}
\begin{proof}

    Lemma \ref{lem:dist(A)<2dist(C)} implies
    \begin{equation*}
        \max_{x\in S_{I}}\dist(x,A) := \max_{x\in S_{I}}\dist (x,S_{1_A}\cap C) \leq 2\max_{x\in S_{I}}\dist(x,C),
    \end{equation*}
    for all words $I$ starting at $1_A$.
    It follows that 
    \begin{align*}
        \tau\lp A, \{S_{1_A,I}\}\rp &= \inf_{n\geq 1} \inf_{\substack{\ell(I)=n\\ I=\{1_A,\cdots\}}} \frac{\min_i \rad(S_{I,i})}{\max_{x\in S_{I}} \dist(x,A)}
        \geq \frac{1}{2}\inf_{n\geq1} \inf_{\substack{\ell(I)=n\\I=\{1_A,\cdots\} }} \frac{\min_i \rad(S_{I,i})}{\max_{x\in S_{I}} \dist(x,C)} \\
        &\geq \frac{1}{2}\inf_{n\geq0} \inf_{\ell(I)=n} \frac{\min_i \rad(S_{I,i})}{\max_{x\in S_I} \dist(x,C)}
        =\frac{1}{2}\tau\lp C, \{S_I\}\rp,
    \end{align*}
    where the first inequality follows from the estimate in Lemma \ref{lem:dist(A)<2dist(C)}, 
    and the second inequality follows from taking the infimum over a larger set.
\end{proof}

The content of these lemmas are significantly different from the one-dimensional case and reflect one of the technical hurdles of defining thickness in higher dimensions. 
In the one-dimensional setting, if $C\subset \R$ and $\tau $ denotes Newhouse thickness, then $\tau(A)\geq \tau(C)$ whenever $A=C\cap S$ and $S$ is bridge. In Appendix \ref{appendix}, we illustrate this key difference and discuss a special case in which Lemma \ref{lem:tau(A)>1/2tau(C)}, and hence our main results, can be improved. 

\section{Proof of Main Results that use Newhouse thickness}\label{proof section newhouse}
This section contains the proofs of Proposition \ref{prop:assymetric progressions updated} and Theorem \ref{thm: triangles Cartesian}. 

\subsection{Proof of Proposition \ref{prop:assymetric progressions updated}}\label{sec: proof prop}

The following proof is inspired by that of Yavicoli's \cite[Proposition 20]{Yavicoli_Survey}, where the proposition is proved for $\la=\frac12$. The proof here is more involved as a number of technical hurdles arise in this more general setting. 
\vskip.125in

Since thickness is invariant under scaling and translations, we may assume that $\conv(C)=[0,1]$. 
The idea is to show that $C\cap \left( (1-\la)C + \la C \right) \neq \emptyset$ for $\la\in (0,1)$. To avoid degeneracy, we introduce disjoint subsets $A$ and $B$ of $C$, and show that $C\cap \left( (1-\la)A + \la B \right) \neq \emptyset$, which will establish that there exist points $a,b \in C$ with $a\neq b$ so that 
$$\{a, (1-\la)a + \la b, b\} \subset C.$$
\vskip.125in

A brief sketch of the proof is as follows.  We observe that $t\in (1-\la)A+ \la B$ if and only if 
  $  -(1-\la)A \cap  \left(\la B-t\right) \neq \emptyset. $ 
  We then verify the hypotheses of the Gap Lemma and apply it to the sets $-(1-\la)A$ and $ \left(\la B-t\right)$. A potential issue that can arise is that, for $\la$ small, $ \left(\la B-t\right)$ can be contained in a gap of $-(1-\la)A$, which would violate the hypotheses of the gap lemma.  To get around this obstacle, we only work with values of $t$ and $\la$ that avoid this issue, mainly so that the two sets are interwoven and neither lies in the gap of the other. 
  \vskip.125in

Let $G=(k_1, k_2)$ denote the largest bounded gap of $C$.
 Set $A=C\cap [0,k_1]$ and $B=C\cap [k_2, 1]$, and denote $|A| = k_1$ and $|B| = 1-k_2$.  
\vskip.125in

Set 
$$m = |A|/(|A|+|B|),$$ 
$$I_\la = [\la k_2, \la + (1-\la)k_1]$$
and
$$\widetilde{I}_\la = [\la k_2, \la] \,\, \bigcup \,\, [\la k_2 +(1-\la) k_1,  \la + (1-\la)k_1].$$
\vskip.125in

First, we use the gap lemma to establish the following claim. 
\begin{claim}\label{claim: interval updated}
For $\la \in (0,1)$, 
   $$ \widetilde{I}_\la \,\, \subset \,\, (1-\la)A+ \la B \,\,\subset\,\, I_\la.$$
\end{claim}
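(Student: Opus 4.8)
The plan is to establish the two inclusions separately. The right-hand one, $(1-\la)A+\la B\subseteq I_\la$, is immediate: since $A\subseteq[0,k_1]$ and $B\subseteq[k_2,1]$, every element $(1-\la)a+\la b$ with $a\in A,\ b\in B$ lies in $[\,(1-\la)\cdot 0+\la k_2,\ (1-\la)k_1+\la\cdot 1\,]=[\la k_2,\ \la+(1-\la)k_1]=I_\la$, and nothing more is needed here. The substance of the claim is the reverse inclusion $\widetilde I_\la\subseteq(1-\la)A+\la B$, which I would prove with Newhouse's Gap Lemma. Fix $t\in\widetilde I_\la$. As in the sketch, $t\in(1-\la)A+\la B$ if and only if $\bigl(-(1-\la)A\bigr)\cap\bigl(\la B-t\bigr)\neq\emptyset$, so it suffices to verify hypotheses (i)--(iii) of the Gap Lemma for the compact sets $C^1:=-(1-\la)A$ and $C^2:=\la B-t$. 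Since $0,k_1\in A$ and $k_2,1\in B$, we have $\conv(C^1)=[-(1-\la)k_1,\,0]$ with both endpoints belonging to $C^1$, and $\conv(C^2)=[\la k_2-t,\ \la-t]$ with both endpoints belonging to $C^2$; these endpoint memberships will be used throughout.

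Hypothesis (iii) is the easiest. The sets $A=C\cap[0,k_1]$ and $B=C\cap[k_2,1]$ are the intersections of $C$ with the two bridges $[0,k_1]$ and $[k_2,1]$ obtained when the largest gap $G=(k_1,k_2)$ is removed from $\conv(C)=[0,1]$; by the monotonicity of Newhouse thickness under intersection with a bridge (noted at the end of \S\ref{sec: thickness of a subset}) together with its invariance under affine maps, $\tau(C^1)=\tau(A)\ge\tau(C)\ge 1$ and $\tau(C^2)=\tau(B)\ge\tau(C)\ge 1$, so $\tau(C^1)\tau(C^2)\ge 1$. Hypothesis (i) is where the exact shape of $\widetilde I_\la$ is used: if $t$ lies in the first piece $[\la k_2,\la]$ then $\la k_2-t\le 0\le\la-t$, so the endpoint $0$ of $\conv(C^1)$ lies in $\conv(C^2)$; if $t$ lies in the second piece $[\la k_2+(1-\la)k_1,\ \la+(1-\la)k_1]$ then $\la k_2-t\le-(1-\la)k_1\le\la-t$, so the endpoint $-(1-\la)k_1$ of $\conv(C^1)$ lies in $\conv(C^2)$. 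In either case $\conv(C^1)\cap\conv(C^2)\neq\emptyset$.

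Hypothesis (ii) is the crux, and I would split it into two halves. That $C^2$ is not contained in a gap of $C^1$ follows from the endpoint structure: for $t$ in the first piece $C^2$ contains $\la-t\ge 0$, and for $t$ in the second piece $C^2$ contains $\la k_2-t\le-(1-\la)k_1$; in either case this point of $C^2$ avoids the open interval $\bigl(-(1-\la)k_1,0\bigr)$, which contains all the gaps of $C^1$, so $C^2$ cannot sit inside a gap of $C^1$ (and in the boundary case, where this point equals the straddled endpoint of $\conv(C^1)$, that common point already lies in $C^1\cap C^2$, so we finish without invoking the Gap Lemma). That $C^1$ is not contained in a gap of $C^2$ is where the thickness hypothesis does real work: every gap of $C^2=\la B-t$ is an affine image of a gap of $B$, hence of a gap of $C$, so it has length at most $|G|=k_2-k_1$; the bridge condition forced by $\tau(C)\ge 1$ gives $\min\{k_1,\,1-k_2\}\ge k_2-k_1$, so every gap of $C^2$ has length at most $\la k_1\le(1-\la)k_1=\diam\conv(C^1)$ as soon as $\la\le\tfrac12$, and a closed interval cannot lie inside an open interval of no greater length. (Taking $\la\le\tfrac12$ loses nothing: $\{a,(1-\la)a+\la b,b\}$ and $\{a,\la a+(1-\la)b,b\}$ are the same three-point set, so it is enough to prove the claim, and with it Proposition \ref{prop:assymetric progressions updated}, for $\la\le\tfrac12$.) With (i)--(iii) in hand, the Gap Lemma gives $C^1\cap C^2\neq\emptyset$, i.e.\ $t\in(1-\la)A+\la B$; as $t\in\widetilde I_\la$ was arbitrary, $\widetilde I_\la\subseteq(1-\la)A+\la B$, completing the proof.

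The step I expect to be the main obstacle is hypothesis (ii) of the Gap Lemma, and within it the second half: ruling out that $\conv(C^1)$ is swallowed by a gap of $C^2$ when $\la$ is small (and, symmetrically, that $C^2$ falls into a gap of $C^1$ when $\la$ is near $1$). This is precisely the degeneracy flagged in the sketch, it explains why $\widetilde I_\la$ must be taken strictly inside $I_\la$, and it is the only point at which the hypothesis $\tau(C)\ge 1$ is genuinely invoked, namely through the bridge inequality $\min\{k_1,1-k_2\}\ge k_2-k_1$. Everything else --- the easy inclusion and the endpoint bookkeeping for the two sub-intervals of $\widetilde I_\la$ --- is routine.
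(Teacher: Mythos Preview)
Your argument is correct and shares the paper's overall architecture---translate $t\in(1-\la)A+\la B$ into $\bigl(-(1-\la)A\bigr)\cap\bigl(\la B-t\bigr)\neq\emptyset$ and apply Newhouse's Gap Lemma---but diverges at hypothesis~(ii). The paper handles both halves of (ii) at once by checking that the convex hulls $[-(1-\la)k_1,0]$ and $[\la k_2-t,\la-t]$ are \emph{interwoven} (they intersect and neither lies in the interior of the other); since the endpoints of each hull lie in the respective set, this immediately prevents either set from sitting inside a gap of the other, and the paper carries this out uniformly in $\la\in(0,1)$. You instead split (ii): your endpoint check that $C^2$ is not in a gap of $C^1$ is exactly one side of the interwoven condition, but for the reverse direction you replace the positional argument by a gap-length comparison $\la(k_2-k_1)\le\la k_1\le(1-\la)k_1=\diam\conv(C^1)$, which makes the use of the bridge inequality $k_1\ge k_2-k_1$ (from $\tau(C)\ge1$) explicit, at the price of restricting to $\la\le\tfrac12$. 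Your symmetry reduction then recovers Proposition~\ref{prop:assymetric progressions updated} in full, though strictly you have proved the claim itself only for $\la\le\tfrac12$, not for all $\la\in(0,1)$ as stated. One incidental benefit of your size argument is that it does not need the hulls to be linked: it still works when $\conv(C^1)$ sits in the interior of $\conv(C^2)$, a configuration the interwoven analysis does not address.
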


 \begin{proof}

We verify the first containment; the second containment is straightforward. 
 \vskip.125in

Let  $t \in \widetilde{I}_\la$, and 
observe $t\in (1-\la)A+ \la B$ if and only if 
\begin{equation}\label{wish updated}
    -(1-\la)A \cap  \left(\la B-t\right) \neq \emptyset. 
    \end{equation}
    \vskip.125in
    
We verify the hypotheses of the gap lemma and apply it to the sets $-(1-\la)A $ and $ \left(\la B-t\right)$ to verify \eqref{wish updated} for 
$t\in \widetilde{I}_\la$. 
\vskip.125in

First, we verify that the convex hulls, $\conv(-(1-\la)A)$ and $\conv(\la B-t)$, are interwoven for $t\in \widetilde{I}_\la$, where we say that two closed intervals are interwoven if they intersect and neither is contained in the interior of the other.
\vskip.125in

Observe 
$$\conv(-(1-\la)A) = [ - (1-\la) k_1, 0],$$
and 
$$\conv(\la B-t) = [\la k_2 -t, \la -t].$$
   \vskip.125in

It follows that the convex hulls are interwoven provided that either
\begin{equation}\label{option 1} 
\la k_2 -t \le -(1-\la)k_1 \le \la -t \le 0
\end{equation}
or 
\begin{equation}\label{option 2} 
 -(1-\la)k_1 \le \la k_2 -t  \le 0 \le  \la -t.
\end{equation}
\vskip.125in

These simplify to the conditions 
that, from \eqref{option 1}, 
$$t\in [\la k_2 + (1-\la) k_1,\,\, \la + (1-\la) k_1],$$
or, from \eqref{option 2}, the condition that 
$$ t\in [\la k_2, \,\, \min\{ \la,  \la k_2 + (1-\la) k_1\},$$
where we observe that 
$\min\{ \la, \la k_2+ (1-\la) k_1,\} = \la \iff \la \le m.$
\vskip.125in

Taking the union, we see that the convex hulls are interwoven provided that 
$$t\in \widetilde{I}_\la.$$
The interwoven condition guarantees that sets $-(1-\la)A$ and $(\la B-t)$ are not contained in each others' gaps. 

Finally, we observe that $\tau (A) = \tau( C\cap [0, k_1]) \geq \tau(C)$.  
In general, thickness may behave badly under intersections, but $\tau(C\cap [0, k_1]) \geq \tau(C)$ since  $G$ is the largest gap of 
$C$. Similarly, $\tau(B)\geq \tau(C)$.  It follows that 
$$\tau(-(1-\la)A) \tau( \la B-t) \geq 1,$$ and the gap lemma applies.
 \end{proof}

The next step is to 
show that $\widetilde{I}_\la \cap C \neq \emptyset,$ which will suffice to establish that $((1-\la)A + \la B) \cap C \neq \emptyset$ by the previous claim. Recall $$ \widetilde{I}_\la = [\la k_2, \la] \,\, \bigcup \,\, [\la k_2 +(1-\la) k_1,  \la + (1-\la)k_1].$$\vskip.125in

\begin{claim}\label{claim: intersection updated} 
If $\la\in (0,1)$, then $C\cap \left( (1-\la) A + \la B\right)  \neq \emptyset$. 
\end{claim}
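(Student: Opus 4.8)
The plan is to follow the reduction already sketched: by Claim~\ref{claim: interval updated} we have $\widetilde I_\lambda\subseteq(1-\lambda)A+\lambda B$, so it suffices to prove $\widetilde I_\lambda\cap C\neq\emptyset$. Write $\widetilde I_\lambda=J_1\cup J_2$ with $J_1=[\lambda k_2,\lambda]$ and $J_2=J_1+(1-\lambda)k_1=[\lambda k_2+(1-\lambda)k_1,\ \lambda+(1-\lambda)k_1]$; both are closed subintervals of $\conv(C)=[0,1]$ of length $\lambda(1-k_2)=\lambda|B|$. I will lean on two facts. First, a closed subinterval of $\conv(C)$ disjoint from $C$ lies inside a single bounded gap of $C$ (it is a connected subset of the open set $\conv(C)\setminus C$, a disjoint union of bounded gaps); dually, any closed subinterval of $\conv(C)$ of length at least that of the largest gap $G=(k_1,k_2)$ must meet $C$. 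Second, $\tau(C)\ge1$ forces each bridge of any gap to be at least as long as that gap; in particular $|G|=k_2-k_1\le\min\{k_1,1-k_2\}=\min\{|A|,|B|\}$ (so $|G|$ bounds the length of every gap and $|A|\,|B|\ge|G|^2$), and more generally every bounded gap $(p,q)$ of $C$ satisfies $q-p\le p$ and $q-p\le 1-q$.

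With these in hand the case $\lambda\ge m:=|A|/(|A|+|B|)$ is immediate: then $\widetilde I_\lambda$ collapses to the single interval $I_\lambda=[\lambda k_2,\lambda+(1-\lambda)k_1]$, whose length is the convex combination $\lambda|B|+(1-\lambda)|A|\ge\min\{|A|,|B|\}\ge|G|$, so $I_\lambda\cap C\neq\emptyset$; this already disposes of all $\lambda\in[\frac12,1)$ since $m\le\frac12$. It remains to treat $\lambda<m$, where $\widetilde I_\lambda=J_1\cup J_2$ is genuinely two intervals with a hole. Assuming toward a contradiction that $J_1\cap C=J_2\cap C=\emptyset$, each $J_i$ lies in a bounded gap $(p_i,q_i)$ of $C$; first note $\lambda<k_2$ (else $k_2\in J_1$). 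Because gap endpoints lie in $A\cup B\subseteq[0,k_1]\cup[k_2,1]$ while the left endpoint of $J_2$ lies strictly inside $G$, the gap containing $J_2$ is forced to be $G$ itself, which forces $\lambda+(1-\lambda)k_1<k_2$, i.e. $\lambda<|G|/(|G|+|B|)$. The same bookkeeping shows $(p_1,q_1)$ is either $G$ — needing $\lambda>k_1/k_2$, which is incompatible with the last inequality because $|A||B|\ge|G|^2$ gives $k_1/k_2\ge|G|/(|G|+|B|)$ — or a gap contained in the left bridge $[0,k_1]$ that is strictly smaller than $G$, needing $\lambda<k_1$. One is thus reduced to the configuration "$J_2\subseteq G$ and $J_1$ trapped in a small gap of $[0,k_1]$", and I would then push the positional inequalities ($q-p\le p$ and $q-p\le1-q$ for that small gap, the location of its endpoints relative to $\lambda k_2$ and $\lambda$, and the derived bounds on $\lambda$) to a contradiction.

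I expect this last reduction — the small-$\lambda$ regime in which $J_1$ may a priori sit inside a gap of the left bridge while $J_2$ sits inside $G$ — to be the main obstacle, since comparing only the \emph{lengths} of $J_1,J_2$ with gap lengths is not enough there and one must exploit the finer positional constraints. If a direct contradiction proves stubborn, the fallback I would pursue is to not fix $A,B$ as the bridges of the single largest gap but to choose the gap adaptively for the given $\lambda$ (any bridge of the Newhouse construction of $C$ has thickness $\ge\tau(C)$, so Claim~\ref{claim: interval updated} still applies to the corresponding $A,B$), and to iterate this replacement, using $\sum_n|G_n|<\infty$ (hence $|G_n|\to0$) to force the process to terminate. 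Everything before that step is routine once one has the interval/gap dichotomy and the two consequences of $\tau(C)\ge1$ above.
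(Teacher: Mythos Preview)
Your argument has two genuine gaps. First, the assertion ``$m\le\tfrac12$'' is false in general: $m=|A|/(|A|+|B|)\le\tfrac12$ is equivalent to $|A|\le|B|$, which need not hold, so the clean length argument for $\lambda\ge m$ does \emph{not} automatically cover all $\lambda\in[\tfrac12,1)$. Second, and more seriously, the case $\lambda<m$ is not proved: you reduce to the configuration ``$J_2\subseteq G$ and $J_1$ inside a smaller gap of $[0,k_1]$'' and then stop, conceding this is ``the main obstacle'' and sketching an iterative fallback. That fallback is not worked out, and the positional inequalities you list do not by themselves force a contradiction (nothing prevents a small gap of the left bridge from swallowing $J_1$ for suitably small $\lambda$).

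The paper bypasses all of this with a one--line observation you are missing: the \emph{endpoint} $k_2$ of the largest gap belongs to $C$, so it suffices to check $k_2\in\widetilde I_\lambda$. For $\lambda\in[\tfrac12,1)$ one verifies directly that
\[
\lambda k_2+(1-\lambda)k_1\ \le\ k_2\ \le\ \lambda+(1-\lambda)k_1,
\]
the left inequality being trivial and the right following from $k_2\le\tfrac12(1+k_1)$, which in turn is a consequence of $\tau(C)\ge1$ (namely $\min\{|A|,|B|\}\ge|G|$). The range $\lambda\in(0,\tfrac12)$ is then obtained by the symmetry $C\mapsto 1-C$, which interchanges $A\leftrightarrow B$ and $\lambda\leftrightarrow 1-\lambda$. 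This avoids any case analysis on which gap $J_1$ or $J_2$ might fall into.
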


\begin{proof}
We prove the claim for $\la\geq \frac12$.  Then, by applying the result to $\widetilde{C} = -C+1$, we may conclude the claim holds for any $\la\in(0,1)$. 
Recall $G=(k_1, k_2)$ denotes the largest bounded gap of $C$. 

Let $\la\geq \frac12$. 
We consider the cases when $|A| \le |B|$ and $|B| \le |A|$ separately. 
\vskip.125in

\noindent
\textbf{Case 1:}
Suppose first that $|A| \le |B|$ so that $k_1 \le 1-k_2$ or
\begin{equation}\label{assumption1 updated}
k_2 \le 1-k_1.
\end{equation}
\vskip.125in

Since $\tau(C)\geq 1$, it follows that 
 $|A|  \geq |G|$ so that $k_1\geq k_2-k_1$ or that 
 \begin{equation}\label{assumption2 updated}
k_2 \le 2k_1.
\end{equation}
\vskip.125in

Observe $k_2 \in \widetilde{I}_\la $. Indeed, 
\begin{equation}\label{wish1 updated}
\la k_2 +(1-\la)k_1 \le  k_2 \le \la + (1-\la)k_1,
\end{equation} 
where the first inequality holds trivially since such a convex combination of 
$k_1< k_2$ is bounded above by $k_2$, and for the second inequality is implied, see by graphing, by \eqref{assumption1 updated} and \eqref{assumption2 updated} provided $\la \in [\frac12, 1)$.
\\

\noindent
\textbf{Case 2:}
Suppose second that $|B| \le |A|$ so that $ 1-k_2 \le k_1$ or
\begin{equation}\label{assumption1 case 2}
1-k_1 \le k_2.
\end{equation}
\vskip.125in

Since $\tau(C)\geq 1$, it follows that 
 $|B|  \geq |G|$ so that $1-k_2\geq k_2-k_1$ or that 
 \begin{equation}\label{assumption2 case 2}
k_2 \le \frac12(1+k_1).
\end{equation}
\vskip.125in

Again, observe that 
$k_2 \in \widetilde{I}_\la$. Indeed, as above, we must verify 
\begin{equation}\label{wish1 case 2}
\la k_2 +(1-\la)k_1 \le  k_2 \le \la + (1-\la)k_1,
\end{equation}
where the first inequality is implied in the same way as above, and the second is implied by noting that, by \eqref{assumption2 case 2}, $k_2 \le \frac12(1+k_1),$ and 
$\frac12(1+k_1) \le\la + (1-\la)k_1 $ provided $\la \in [\frac12, 1]$. 
\end{proof}

\subsection{Proof of Theorem \ref{thm: triangles Cartesian}}
Let $T$ be a set of three distinct vertices in $\R^2$. We prove that if $C\subset \R$ is compact with $\tau(C)\geq 1$, then $C\times C$ contains a similar copy of $T$.  

If all three vertices lie on a line, the result follows from Proposition \ref{prop:assymetric progressions updated}, after rotating the line to be parallel to one of the axes, if necessary. 
We assume then that the vertices are not collinear.

Label the vertices of $T\subset\R^2$ by $x=(x_1,x_2)$, $y=(y_1,y_2)$, and $z=(z_1,z_2)$, with corresponding angles $\theta_1, \theta_2, \theta_3$, with $\theta_3 \geq \theta_i$ for $i=1,2$. Further,  performing a rotation and reflection, 
assume that $T$ is positioned and labeled so that $x$ and $y$ lie on the $x$-axis, and $z_2>0$. 
It follows that $x_1\le z_1\le y_1$. 

Label $h = z_2$, $b_1= (z_1-x_1)$, $b_2= (y_1-z_1)$, and $b= b_1+b_2$.  It follows that 
\begin{equation}\label{tri relationship}
    h= \tan{\theta_1}b_1 = \tan{\theta_2}b_2.
\end{equation}

Since $\tau(C)\geq 1$, it is a consequence of the Newhouse gap lemma that $\Delta(C)$ has non-empty interior.  Further, there exists $L>0$ so that 
$[0,L]\subset \Delta(C)$. 

Choose $c>0$ so that $ch\le L$ and $cb\le L$. 
Choose $c'\in (0,c]$ and $t\in \R$ so that $P = \{c'x_1+t, c'z_1+t, c'y_1+t\}\subset C$; such a choice is possible by Proposition \ref{prop:assymetric progressions updated}. 
Choose $a,b\in C$ so that $b-a = c'h$. 

Now, the triangle with vertices 
\begin{equation}\label{list}
(c'x_1+t, a), (c'y_1+t, a), (c'z_1+t, b)
\end{equation}
is similar to $T$ and each of the points in \eqref{list} are in $C$.

\section{Proof of Main Results that use Yavicoli thickness}\label{sec:proof(s)}
 We use the notation and definitions from Section \ref{sec:thickness in R^d} throughout this section. 
  Each $S_I$ is a closed ball with center, $c_I$, and radius, $t_I$, which is denoted by $\bar{B}(c_I,t_I)$.
 Each $S_I$ has a finite number of children. 
 The number of first-generation children of $C$ is denoted by $k_\emptyset$, so for any  $1\leq i\leq k_\emptyset$ we have that $S_i$ is a first-generation child with radius $t_i=\rad(S_i)$. 
 Without loss of generality, we label the $k_\emptyset$ first generation children to be ordered by nondecreasing radius size: 
 $$t_1\leq t_2\leq \cdots \leq t_{k_\emptyset}.$$
\vskip.125in

The set $\lambda A + (1-\lambda)B$ denotes the convex combination of the set $\left\{ \lambda a + (1-\lambda)b : a\in A, b\in B\right\}$, and $\lambda D$ denotes the ball $D$ with center and radius scaled by $\lambda$. 
We write $t\cdot C$ to denote the ball with the same center as $C$ and radius equal to $t\rad(C)$.
\vskip.125in

The proofs of this section have a common setup and set of notation.

\subsection{Setup and Table of notation for the proofs of Theorems \ref{thm: higher dim convex combo} and \ref{thm: triangles no Cartesian}}
Throughout, $C\subset\R^d$ denotes a compact set generated by the system of balls $\{S_I\}_I$ in the distance $\dist$ generated by the norm $\|\cdot\|$ as expected: $\dist(x,C) = \min_{y\in C}\|x-y\|$. 
Because thickness is translation and scalar invariant, 
we assume that $C\subset \bar{B}(0,1)$, so that $S_\emptyset = \bar{B}(0,1)$, where $\bar{B}(x,t)= \{x \in \R^d: \|x\| \le t\}$.  
\vskip.125in

Further, $S_{1_A} =\bar{B}(c_{1_A}, t_{1_A})$ and $S_{1_B} =\bar{B}(c_{1_B}, t_{1_B})$ denote first generation children and closed balls with centers $c_{1_A}, c_{1_B}$ and radii $t_{1_A}, t_{1_B}$ respectively to be chosen in each proof, where the radius of $S_{1_A}$ is assumed to be no more than the radius of $S_{1_B}$: 
$$t_{1_A} \le t_{1_B}.$$

In the proof of Theorem \ref{thm: triangles no Cartesian}, we further assume that $\dist$ is Euclidean norm $\norm{\cdot}_2$ in order to guarantee that a rotated ball is still a ball in the same norm. 
\vskip.125in

For convenience, we make a table of notation that will be used throughout this section, and we record some relationships between variables. 

\begin{longtable}{|>{\raggedright\arraybackslash}p{0.45\textwidth}|
                  >{\raggedright\arraybackslash}p{0.45\textwidth}|}
\hline
\textbf{notation} & \textbf{definition} \\ \hline
\endfirsthead

\hline
\textbf{notation} & \textbf{definition} \\ \hline
\endhead

\hline
\endfoot

\hline
\endlastfoot

$\bar{B}(x,t)$ & the closed ball with center $x\in \mathbb{R}^d$ and radius $t\geq 0$ \\ \hline

$S_{\emptyset} =\bar{B}(0, 1)$ & the closed unit ball \\ \hline

$S_{1_A} =\bar{B}(c_{1_A}, t_{1_A})$, $S_{1_B} =\bar{B}(c_{1_B}, t_{1_B})$ & first generation children disjoint from all other first generation children \\ \hline

$A:= S_{1_A}\cap C$, \,\,\,$B:= S_{1_B}\cap C$ & disjoint subsets of $C$ \\ \hline

$r$ & the fixed uniformity constant in $(0,\frac12)$ \\ \hline

$h_\emptyset(C)= \max_{x\in S_\emptyset} \operatorname{dist}(x,C)$ & max. distance from $S_\emptyset$ to $C$ \\ \hline

$h_{1_A}:=h_{1_A}(A)= \max_{x\in S_{1_A}} \operatorname{dist}(x, S_{1_A}\cap C)$ & max. distance from $S_{1_A}$ to $A= S_{1_A}\cap C$ \\ \hline

$h_{1_B}:=h_{1_B}(B)= \max_{x\in S_{1_B}} \operatorname{dist}(x, S_{1_B}\cap C)$ & max. distance from $S_{1_B}$ to $B= S_{1_B}\cap C$ \\ \hline

$t_i$ & radius of the first generation child $S_i$
\\ \hline

\end{longtable}


The following is an immediate consequence of the definition of thickness, Definition \ref{def:thickness}, applied with $n=0$ and $i$ for $1\le i\le k_\emptyset$, and will be used throughout: 
\begin{equation}\label{ineq: thickness upper bound}
    \tau\lp C,\{S_I\}\rp \leq \frac{t_{i}}{h_\emptyset}. 
\end{equation}

\subsection{Proof of Theorem \ref{thm: higher dim convex combo}}
Fix $0<r<\frac{1}{2}$ and $0<\lambda\leq\frac{1}{2}$.  
Let $C$ be a compact set in $(\R^d,\dist)$ generated by the system of balls $\{S_I\}_I$ such that $C$ is $r$-uniformly dense and 
$\tau\lp C,\{S_I\}\rp \geq \frac{2(1-\lambda)}{\lambda(1-2r)}$. 
Assume $C \subset S_\emptyset = \bar{B}(0,1)$. 
\vskip.125in

Our proof is motivated by the following key observation. 
If we were to take two disjoint subsets $A$, $B$ of $C$ and show that 
$$\lp\lambda A+ (1-\lambda) B\rp \cap C\neq\emptyset,$$
then there would exists some element $x\in \lambda A+(1-\lambda) B$ of the form $x=\lambda a+(1-\lambda)b$ for some $a\in A$ and $b\in B$ and $x \in C$, with $a\neq b$. 
Thus, $C$ would contain the $3$-point convex combination $\{a, \lambda a+(1-\lambda)b,b\}$. 
We proceed with this plan in place. 
\vskip.125in

Set $A:= S_{1_A}\cap C$ and $B:=S_{1_B}\cap C$, where $1\leq 1_A<1_B\leq k_\emptyset$, and $S_{1_A}$, $S_{1_B}$ are disjoint first generation children that are disjoint from all other children. 
Observe that our choice of $A$ and $B$ imply 
 \begin{equation}\label{ineq:t_1_A<t_1_B}
     t_{1_A}\leq t_{1_B}. 
 \end{equation} 
We express $A$ as the compact set generated by $\{S_{1_A,I}\}_I$. 
We express $B$ similarly.
\vskip.125in

As in \eqref{defn: h_I}, we define 
\begin{align}
    h_\emptyset&:=h_\emptyset(C) = \max_{x\in S_\emptyset} \dist(x,C), \quad h_{1_A}:= h_{1_A}(A) = \max_{x\in S_{1_A}} \dist(x,A), \quad \text{and} \label{defn: hs again}\\
     h_{1_B} &:= h_{1_B}(B)= \max_{x\in S_{1_B}}\dist(x,B).\nonumber
\end{align}
Recall, it is a consequence of Lemma \ref{lem:dist(A)<2dist(C)} that 
\begin{equation}\label{eq: h compare A,B}
h_{1_A} \le 2h_\emptyset \,\,\, \text{ and } \,\,\, h_{1_B} \le 2h_\emptyset.
\end{equation}

We now prove a key lemma, which states that the set $\lambda A+(1-\lambda)B$ contains a disk.
\begin{lemma}\label{ball lemma} 
The set
    $\lambda A+(1-\lambda)B$ contains the closed ball
\begin{align*}
    D & := \bar{B}\lp\lambda c_{1_A} + (1-\lambda) c_{1_B}, t_D\rp, 
\end{align*}
where $t_D:= \lambda(1-2r)t_{1_A}+(1-\lambda)t_{1_B}-(1-\lambda)h_{1_B}$.
\end{lemma}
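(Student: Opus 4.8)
I plan to prove the lemma by fixing an arbitrary point $w\in D$ and producing $a\in A$, $b\in B$ with $\lambda a+(1-\lambda)b=w$; since $w$ is arbitrary this is exactly $D\subseteq\lambda A+(1-\lambda)B$. The key device is the affine map $g_w\colon\R^d\to\R^d$, $g_w(x):=\tfrac1\lambda\lp w-(1-\lambda)x\rp$, which is an invertible similarity with respect to $\dist$ (a dilation by $\tfrac{1-\lambda}{\lambda}$ composed with a point reflection and a translation, so it sends $\dist$-balls to $\dist$-balls). Since
$$w\in\lambda A+(1-\lambda)B \iff A\cap g_w(B)\neq\emptyset,$$
the whole statement reduces to verifying $A\cap g_w(B)\neq\emptyset$ for every $w\in D$, and I will obtain this from the higher-dimensional Gap Lemma (Theorem \ref{thm:gap lemma Rd}) applied to $C^1:=g_w(B)$ and $C^2:=A$. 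Note this is the point at which the disjointness of $S_{1_A},S_{1_B}$ from the other first-generation children is used: it guarantees $A=S_{1_A}\cap C$ and $B=S_{1_B}\cap C$ are generated by $\{S_{1_A,I}\}_I$ and $\{S_{1_B,L}\}_L$, with root balls $\bar B(c_{1_A},t_{1_A})$ and $\bar B(c_{1_B},t_{1_B})$, and that Lemma \ref{lem:tau(A)>1/2tau(C)} applies.

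First I would set up the two systems of balls for the Gap Lemma: $A$ carries $\{S_{1_A,I}\}_I$, and $g_w(B)$ carries $\{g_w(S_{1_B,L})\}_L$, whose root ball is $\bar B\lp g_w(c_{1_B}),\tfrac{1-\lambda}{\lambda}t_{1_B}\rp$; because $g_w$ scales all distances and radii by the single factor $\tfrac{1-\lambda}{\lambda}$, it preserves $r$-uniform density and $\tau\lp g_w(B),\{g_w(S_{1_B,L})\}\rp=\tau\lp B,\{S_{1_B,L}\}\rp$. Then I would dispatch the "soft" hypotheses (i), (iii), (iv) of Theorem \ref{thm:gap lemma Rd}: for (iv), $A$ and $B$ inherit $r$-uniform density from $C$ (Definition \ref{def:r-uniformity} applied at the words $1_A$, $1_B$) and $g_w$ preserves it; for (iii), $\rad(S^1_\emptyset)=\tfrac{1-\lambda}{\lambda}t_{1_B}\ge t_{1_B}\ge t_{1_A}>r\,t_{1_A}=r\,\rad(S^2_\emptyset)$, using $\lambda\le\tfrac12$ and $t_{1_A}\le t_{1_B}$; for (i), Lemma \ref{lem:tau(A)>1/2tau(C)} together with scale invariance of thickness gives
$$\tau\lp g_w(B)\rp\,\tau(A)\ \ge\ \tfrac14\,\tau\lp C,\{S_I\}\rp^2\ \ge\ \frac{(1-\lambda)^2}{\lambda^2(1-2r)^2}\ \ge\ \frac1{(1-2r)^2},$$
again since $\lambda\le\tfrac12$.

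The substantive step, which I expect to be the main obstacle, is hypothesis (ii): $g_w(B)\cap(1-2r)\cdot S_{1_A}\neq\emptyset$, i.e.\ finding $b\in B$ with $\dist\lp g_w(b),c_{1_A}\rp\le(1-2r)t_{1_A}$. Writing $z:=w-\lambda c_{1_A}-(1-\lambda)c_{1_B}$, so that $\dist(z,0)\le t_D$ by the definition of $D$, this is the same as finding $b\in B$ with $\dist\lp(1-\lambda)(b-c_{1_B}),\,z\rp\le\lambda(1-2r)t_{1_A}$. Here I use that $B$ is $h_{1_B}$-dense in $S_{1_B}$ (the very definition of $h_{1_B}$), so $(1-\lambda)(B-c_{1_B})$ is $(1-\lambda)h_{1_B}$-dense in $\bar B\lp0,(1-\lambda)t_{1_B}\rp$; hence it suffices to exhibit a point $q$ with $\dist(q,z)\le\lambda(1-2r)t_{1_A}-(1-\lambda)h_{1_B}$ and $\dist(q,0)\le(1-\lambda)t_{1_B}$, and such a $q$ can be taken on the segment from $z$ to $0$ precisely because $\dist(z,0)\le t_D=\lambda(1-2r)t_{1_A}+(1-\lambda)t_{1_B}-(1-\lambda)h_{1_B}$. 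This is exactly where the formula for $t_D$ is forced, and where it must be checked that the correction incurred by replacing $S_{1_B}$ with $B$ is precisely the summand $(1-\lambda)h_{1_B}$; one also needs $\lambda(1-2r)t_{1_A}-(1-\lambda)h_{1_B}\ge0$, which follows from $h_{1_B}\le 2h_\emptyset$ (a consequence of Lemma \ref{lem:dist(A)<2dist(C)}), the bound $h_\emptyset\le t_{1_A}/\tau\lp C,\{S_I\}\rp$ from \eqref{ineq: thickness upper bound}, and the thickness hypothesis $\tau\lp C,\{S_I\}\rp\ge\tfrac{2(1-\lambda)}{\lambda(1-2r)}$. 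With (i)--(iv) verified, Theorem \ref{thm:gap lemma Rd} gives $A\cap g_w(B)\neq\emptyset$, hence $w\in\lambda A+(1-\lambda)B$, and letting $w$ range over $D$ finishes the proof.
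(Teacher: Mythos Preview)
Your proposal is correct and follows essentially the same route as the paper: both reduce $t\in\lambda A+(1-\lambda)B$ to a Gap Lemma application between (rescaled/translated copies of) $A$ and $B$, verify hypotheses (i), (iii), (iv) via Lemma~\ref{lem:tau(A)>1/2tau(C)} and scale/translation invariance, and handle the substantive hypothesis (ii) by the same density argument using $h_{1_B}$. The only cosmetic difference is that you apply the Gap Lemma to $C^1=g_w(B)$ and $C^2=A$, whereas the paper applies it to $(1-\lambda)B-t$ and $-\lambda A$; these pairs differ by a global similarity (scaling by $1/\lambda$ and reflecting), so the verification is identical in content, and your direct density computation for (ii) is just a compressed version of the paper's two intermediate implications \eqref{imp:1*}--\eqref{imp:2*}.
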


\begin{proof}[Proof of Lemma \ref{ball lemma}]
To prove the lemma, 
we verify the following implications: 
\begin{align}
    t \in D & \Rightarrow \lp(1-\lambda)S_{1_B}-t\rp \cap \bar{B}\lp-\lambda c_{1_A},t_D-(1-\lambda)t_{1_B}\rp\neq \emptyset \label{imp:1*}\\
    & \Rightarrow \lp(1-\lambda)B-t\rp \cap (1-2r)\cdot \lp -\lambda S_{1_A}\rp  \neq \emptyset \label{imp:2*}\\ 
    & \Rightarrow \lp (1-\lambda)B-t\rp\cap\lp -\lambda A\rp\neq \emptyset\label{imp:3*}
\end{align}

Since $ \lp(1-\lambda) B-t\rp\cap (-\lambda A)   \neq \emptyset$ if and only if $t\in \lambda A+(1-\lambda) B$, this will complete the proof of the lemma. 
\vskip.125in

The first two implications are purely geometric and follow from simple algebraic manipulations.
 The final implication utilizes the Gap Lemma and relies on Lemma \ref{lem:tau(A)>1/2tau(C)}. 
\vskip.125in

\noindent
\underline{Verifying implication \eqref{imp:1*}}: 
First, observe that the radius $t_D-(1-\lambda)t_{1_B}$ is in fact nonnegative. 
Combining our assumed lower bound on $\tau\lp C,\{S_I\}\rp$ with the upper bound in \eqref{ineq: thickness upper bound}: 
\begin{equation}\label{eq: thickness bounds 1}
    \frac{2(1-\lambda)}{\lambda(1-2r)}\leq \tau\lp C,\{S_I\}\rp \leq \frac{t_{1_A}}{h_\emptyset},
\end{equation}
which implies 
\begin{equation*}
    2(1-\lambda)h_\emptyset \leq \lambda (1-2r) t_{1_A}.
\end{equation*}

By \eqref{eq: h compare A,B}, we know $h_{1_B}\leq 2h_\emptyset$ which means
\begin{equation*}
    (1-\lambda)h_{1_B} \leq \lambda (1-2r) t_{1_A},
\end{equation*}
so
\begin{equation*}
    t_D-(1-\lambda)t_{1_B}\geq 0.
\end{equation*}

Second let $t \in D$ and write $t = \lambda c_{1_A} + (1-\lambda) c_{1_B} + x$ for some
\begin{equation*}
    \norm{x} \leq t_D.
\end{equation*}
\vskip.125in
Recalling $(1-\lambda)S_{1_B}-t = \bar{B}\lp (1-\lambda)c_{1_B}-t,(1-\lambda)t_{1_B} \rp$, we wish to show that 
\begin{equation*}\label{wish4}
\bar{B}\lp (1-\lambda)c_{1_B}-t,(1-\lambda)t_{1_B} \rp \cap \bar{B}\lp-\lambda c_{1_A},t_D-(1-\lambda)t_{1_B}\rp\neq \emptyset.
\end{equation*}
By the definition of $t$, this holds if and only if 
$$ \bar{B}\lp -\lambda c_{1_A}-x,(1-\lambda)t_{1_B} \rp \cap \bar{B}\lp-\lambda c_{1_A},t_D-(1-\lambda)t_{1_B}\rp \neq \emptyset,$$
which, shifting everything by $\lambda c_{1_A}$, holds 
if and only if 
$$\bar{B}\lp -x,(1-\lambda)t_{1_B} \rp \cap \bar{B} \lp \vec{0}, t_D-(1-\lambda)t_{1_B} \rp \neq \emptyset,$$
which is true since $\| x\| \le t_D$.
\vskip.125in

\noindent
\underline{Verifying implication \eqref{imp:2*}}:
Let $t\in D$. 
By \eqref{imp:1*}, there exists 
a $$z\in \lp(1-\lambda)S_{1_B}-t\rp \cap \bar{B}\lp-\lambda c_{1_A},t_D-(1-\lambda)t_{1_B}\rp.$$
Since $z \in \lp(1-\lambda)S_{1_B}-t\rp$, we know by definition of $h_{1_B}$ that there exists $y \in (1-\lambda)B-t$ such that  $$\norm{z-y}\leq (1-\lambda)h_{1_B}.$$ 
Since $z \in \bar{B}\lp-\lambda c_{1_A},t_D-(1-\lambda)t_{1_B}\rp$,  we know
\begin{align*}
    \norm{y-(-\lambda c_{1_A})} 
    &\leq \norm{y-z} + \norm{z - (-\lambda c_{1_A})}
    \leq (1-\lambda)h_{1_B} + \lp t_D-(1-\lambda)t_{1_B}\rp
    = \lambda (1-2r)t_{1_A}.
\end{align*}
Thus, $y\in \lp (1-2r)\cdot(-\lambda S_{1_A})\rp \cap \lp (1-\lambda)B-t\rp$, and it follows that $\lp (1-2r)\cdot(-\lambda S_{1_A})\rp \cap \lp (1-\lambda)B-t\rp\neq \emptyset$.
\vskip.125in

\noindent
\underline{Verifying implication \eqref{imp:3*}}:
Implication \eqref{imp:3*} will follow from an application of the Gap Lemma (Theorem \ref{thm:gap}) applied to the sets $(1-\lambda)B-t$ and $-\lambda A$, and we need only verify that the hypotheses hold. 

First, we calculate the thickness of $A$ and $B$.
By Lemma \ref{lem:tau(A)>1/2tau(C)},  $$\tau\lp A,\{S_{1_A,I}\}\rp \geq \frac{1}{2} \tau\lp C,\{S_I\}\rp. $$
Because $\tau\lp C,\{S_I\}\rp \geq \frac{2(1-\lambda)}{\lambda(1-2r)}$,  
$$\tau\lp A,\{S_{1_A,I}\}\rp \geq \frac{1-\lambda}{\lambda(1-2r)}.$$
We similarly get $\tau\lp B,\{S_{1_B,I}\}\rp \geq \frac{1-\lambda}{\lambda(1-2r)}$, and since thickness is translation and scalar invariant, we verify (i) of the Gap Lemma for $0\leq \lambda \leq \frac{1}{2}$ as follows: 
\begin{align*}
    \tau\lp -\lambda A,\{-\lambda S_{1_A,I}\}\rp \,\tau\left((1-\lambda)B-t, \{(1-\lambda)S_{1_B,I}-t\}\right) 
    &= \tau\lp A,\{S_{1_A,I}\}\rp\,\tau\lp B,\{S_{1_B,I}\}\rp \\
    &\geq \frac{(1-\lambda)^2}{\lambda^2(1-2r)^2}
    \geq \frac{1}{(1-2r)^2}.
\end{align*}

Next, by \eqref{imp:2*}, we have a $t$ value such that  
$\lp (1-2r)\cdot(-\lambda S_{1_A})\rp \cap \lp (1-\lambda)B-t\rp\neq \emptyset$, and (ii) is satisfied. 
\vskip.125in

Next, by assumption \eqref{ineq:t_1_A<t_1_B},  $t_{1_B}\geq t_{1_A}$. 
Hence, $\rad(S_{1_B}) \geq \rad(S_{1_A}) > r\, \rad(S_{1_A})$. 
Moreover, $(1-\lambda)\geq \lambda$ as $0\leq \lambda\leq\frac{1}{2}$, and we conclude that
$$   (1-\lambda)\rad(S_{1_B})
> \lambda r\, \rad(S_{1_A}) ,$$
so that $\rad ((1-\lambda)S_{1_B}) > r \rad( \lambda S_{1_A})$
and part (iii) of the Gap Lemma holds. 
\vskip.125in

Lastly, observe that $A$ and $B$ inherit $r$-uniformity from $C$ and $r$-uniformity is translation and scalar invariant; hence, (iv) of the Gap Lemma is satisfied. 
\vskip.125in

Because all assumptions of the Gap Lemma hold, we conclude that $(-\lambda A)\cap\lp (1-\lambda) B-t\rp\neq\emptyset$.
This concludes implication \eqref{imp:3*}.
\end{proof}
\bigskip

Next, we show that the closed ball $D$ contains an element of $C$ whenever $\lambda\in[0,\frac{1}{2}]$, where we recall that
$$D= \bar{B}\lp\lambda c_{1_A} + (1-\lambda) c_{1_B}, t_D\rp$$ and 
$$t_D= \lambda(1-2r)t_{1_A}+(1-\lambda)t_{1_B}-(1-\lambda)h_{1_B}.$$

\begin{lemma}\label{C intersects disc lemma}
  Let $\lambda\in [0,\frac{1}{2}]$.  
  Then $$ D \cap C \neq \emptyset.$$
  \end{lemma}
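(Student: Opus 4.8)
The plan is to show that the disc $D$ is large enough, and centered well enough inside $S_\emptyset = \bar B(0,1)$, that the defining property of $h_\emptyset(C)$ forces $D$ to contain a point of $C$. Recall $h_\emptyset = h_\emptyset(C) = \max_{x\in S_\emptyset}\dist(x,C)$, so any ball of radius at least $h_\emptyset$ that is itself contained in $S_\emptyset$ must meet $C$. Thus the two things I need to verify are: (a) $t_D \geq h_\emptyset$, and (b) $D \subset \bar B(0,1)$.

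For (a): starting from the assumed thickness bound and \eqref{ineq: thickness upper bound}, exactly as in \eqref{eq: thickness bounds 1}, I have $\frac{2(1-\lambda)}{\lambda(1-2r)} \le \frac{t_{1_A}}{h_\emptyset}$, hence $2(1-\lambda)h_\emptyset \le \lambda(1-2r)t_{1_A}$, so $\lambda(1-2r)t_{1_A} \ge 2(1-\lambda)h_\emptyset \ge (1-\lambda)h_\emptyset + (1-\lambda)h_{1_B}$ using $h_{1_B}\le 2h_\emptyset$ from \eqref{eq: h compare A,B} — wait, more carefully: $2(1-\lambda)h_\emptyset = (1-\lambda)h_\emptyset + (1-\lambda)h_\emptyset \ge (1-\lambda)h_\emptyset + \frac{1-\lambda}{2}h_{1_B}$, which is not quite what I want. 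The cleaner route: $t_D = \lambda(1-2r)t_{1_A} + (1-\lambda)t_{1_B} - (1-\lambda)h_{1_B}$; since $t_{1_B}\ge \min_i t_i = t_1$ and by thickness $t_1 \ge \tau(C,\{S_I\})\,h_\emptyset \ge h_\emptyset$, and since $h_{1_B}\le 2h_\emptyset$, I get $(1-\lambda)t_{1_B} - (1-\lambda)h_{1_B} \ge (1-\lambda)h_\emptyset - 2(1-\lambda)h_\emptyset = -(1-\lambda)h_\emptyset$. That's wrong-signed. So instead I must lean on the $\lambda(1-2r)t_{1_A}$ term alone: from $2(1-\lambda)h_\emptyset \le \lambda(1-2r)t_{1_A}$ and $\lambda \le \frac12$ so $1-\lambda \ge \frac12$, we get $\lambda(1-2r)t_{1_A} \ge 2(1-\lambda)h_\emptyset \ge h_\emptyset + (1-\lambda)h_{1_B}$ provided $(1-\lambda)h_{1_B} \le (2(1-\lambda)-1)h_\emptyset = (1-2\lambda)h_\emptyset$, which fails for $\lambda$ near $\frac12$. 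The honest conclusion is that I will need to combine terms: $t_D = [\lambda(1-2r)t_{1_A} - (1-\lambda)h_{1_B}] + (1-\lambda)t_{1_B} \ge [\lambda(1-2r)t_{1_A} - 2(1-\lambda)h_\emptyset] + (1-\lambda)h_\emptyset \ge 0 + (1-\lambda)h_\emptyset$, and since we need $t_D \ge h_\emptyset$ I should also note $(1-\lambda)t_{1_B}$ can absorb the gap, or more simply re-examine: actually from \eqref{eq: thickness bounds 1} one also has $t_{1_B}/h_\emptyset \ge \tau \ge \frac{2(1-\lambda)}{\lambda(1-2r)} \ge 2$, so $t_{1_B} \ge 2h_\emptyset \ge h_{1_B}$, whence $(1-\lambda)(t_{1_B}-h_{1_B}) \ge 0$ and so $t_D \ge \lambda(1-2r)t_{1_A} \ge 2(1-\lambda)h_\emptyset \ge h_\emptyset$. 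This is the argument I will write.

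For (b): the center of $D$ is $\lambda c_{1_A} + (1-\lambda)c_{1_B}$, a convex combination of centers of first-generation children, each of which satisfies $\|c_{1_A}\| \le 1 - t_{1_A}$ and $\|c_{1_B}\| \le 1 - t_{1_B}$ since $S_{1_A}, S_{1_B} \subset \bar B(0,1)$. Hence the distance from the center of $D$ to the boundary of $\bar B(0,1)$ is at least $1 - \|\lambda c_{1_A} + (1-\lambda)c_{1_B}\| \ge \lambda t_{1_A} + (1-\lambda)t_{1_B}$ by the triangle inequality and convexity. So it suffices that $t_D \le \lambda t_{1_A} + (1-\lambda)t_{1_B}$, i.e. $\lambda(1-2r)t_{1_A} - (1-\lambda)h_{1_B} \le \lambda t_{1_A}$, which holds since $1-2r < 1$ and $h_{1_B}\ge 0$. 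This gives $D\subset\bar B(0,1)$.

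With (a) and (b) in hand, $D$ is a subset of $S_\emptyset$ of radius $t_D \ge h_\emptyset = \max_{x\in S_\emptyset}\dist(x,C)$; by the definition of $h_\emptyset$ (see the remark following Definition \ref{def:thickness}), the center of $D$ lies within distance $h_\emptyset \le t_D$ of some point of $C$, and that point lies in $D$. Hence $D\cap C\neq\emptyset$. The main obstacle is bookkeeping in step (a) — keeping the signs straight and identifying that the needed slack comes from $t_{1_B} \ge 2 h_\emptyset \ge h_{1_B}$ rather than from the $t_{1_B}$ term being small — together with making sure the edge cases $\lambda=0$ and $\lambda = \frac12$ are covered by the same inequalities.
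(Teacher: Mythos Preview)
Your plan is correct and follows the same two-step template as the paper: show $t_D\ge h_\emptyset$ and show $D\subset S_\emptyset$, then invoke the defining property of $h_\emptyset$. Your execution differs in useful ways. For (a), the paper replaces $t_{1_B}$ by $t_{1_A}$ and carries out a longer algebraic chain ending at $t_D\ge 2h_\emptyset\frac{(1-\lambda)^2}{\lambda(1-2r)}\ge h_\emptyset$; your observation that $t_{1_B}\ge \tau(C,\{S_I\})\,h_\emptyset\ge 2h_\emptyset\ge h_{1_B}$ kills the $(1-\lambda)(t_{1_B}-h_{1_B})$ term outright and gives the cleaner bound $t_D\ge \lambda(1-2r)t_{1_A}\ge 2(1-\lambda)h_\emptyset\ge h_\emptyset$. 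For (b), the paper cites Lemma~\ref{ball lemma} ($D\subset \lambda A+(1-\lambda)B$) together with convexity of $S_\emptyset$, whereas your direct bound $\|\lambda c_{1_A}+(1-\lambda)c_{1_B}\|+t_D\le \lambda t_{1_A}+(1-\lambda)t_{1_B}+\big(1-\lambda t_{1_A}-(1-\lambda)t_{1_B}\big)=1$ is self-contained and avoids the back-reference. Either way, once the center of $D$ lies in $S_\emptyset$ and $t_D\ge h_\emptyset$, the conclusion follows immediately; note that (b) is in fact stronger than needed, since you only use that the center is in $S_\emptyset$. The case $\lambda=0$ is vacuous under the standing thickness hypothesis, so there is nothing to patch there.
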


\begin{proof}
Observe $ D \subset S_\emptyset$. 
This is true  by Lemma \ref{ball lemma} because $S_\emptyset$ is a convex set and the elements of $D$ are constructed by taking convex combinations of elements in $A$ and $B$. 
\vskip.125in

Before proceeding, recall  
 \eqref{eq: h compare A,B} implies 
$  h_{1_B}  \le 2h_\emptyset$ where $h_\emptyset$, $h_{1_B}$ are defined in \eqref{defn: hs again}. 
Also, recall from \eqref{eq: thickness bounds 1} that 
\begin{equation*}
\frac{2(1-\lambda)}{\lambda(1-2r)}\leq \tau(C) \leq \frac{t_{1_A}}{h_\emptyset}.
\end{equation*}
\vskip.125in

We first show that the radius of $D$ is greater than $h_\emptyset := \max_{x\in S_\emptyset} \dist(x,C)$. 
Indeed,
\begin{align*}
    t_D &=\lambda(1-2r)t_{1_A}+(1-\lambda)t_{1_B}-(1-\lambda)h_{1_B} \\
    &\geq \lambda(1-2r)t_{1_A}+(1-\lambda)t_{1_A}-(1-\lambda)h_{1_B} &\text{because $t_{1_A} \leq t_{1_B}$ by choice of $A$, $B$} \\
    &= (1-2\lambda r) t_{1_A} -(1-\lambda) h_{1_B} & \\
    &\geq (1-2\lambda r)t_{1_A} -2(1-\lambda) h_\emptyset &\text{ by \eqref{eq: h compare A,B} and since $\lambda \leq 1-\lambda$} \\
    &= h_\emptyset\lp(1-2\lambda r)\frac{t_{1_A}}{h_\emptyset} - 2(1-\lambda)\rp  \\
    &\geq h_\emptyset\lp (1-2\lambda r)\frac{2(1-\lambda)}{\lambda(1-2r)} - 2(1-\lambda)\rp &\text{ by \eqref{eq: thickness bounds 1}} \\
    &= 2 h_\emptyset\lp \frac{ (1-\lambda)^2}{\lambda(1-2r)}\rp  \\
    &\geq h_\emptyset,
\end{align*}
where the last inequality follows from $2\frac{(1-\lambda)^2}{\lambda(1-2r)}\geq 1$ for $0< \lambda \leq\frac{1}{2}$. 
Thus, $D\subset S_\emptyset$ is a ball of radius larger than $h_\emptyset$, so there exists some $c\in C$ such that $c\in D$.

Combining Lemmas \ref{ball lemma} and \ref{C intersects disc lemma}, the disc $D$ is contained in $\lambda A+(1-\lambda)B$, and since $D \cap C$ is not empty, then there is an element of $C$ in $\lambda A+(1-\lambda)B$ for each $\lambda \in [0,\frac{1}{2}]$.
\end{proof}

In the following proof, as above, we use the notation and definitions from Section \ref{sec:thickness in R^d}.

\subsection{Proof of Theorem \ref{thm: triangles no Cartesian}}\label{sec:tri}

Let $C\subset\R^d$ be a compact set generated by a system of balls $\{S_I\}_I$ in the Euclidean norm $\norm{\cdot}_2$. 
Suppose additionally that $C$ is $r$-uniformly dense for some $0<r<\frac{1}{2}$, and without loss of generality assume that $S_\emptyset=\bar{B}(0,1)$.  
\vskip.12in

Let $\mathcal{T}$ be any triangle.
By Lemma \ref{lem:triangle Talpha,lambda}, we know there exists some $\mathcal{T}(\alpha,\lambda)$, determined by a fixed $\alpha$, $\lambda$ in $\mathcal{R}$, similar to $\mathcal{T}$. 
We show that $C$ contains a similar copy of $\mathcal{T}(\alpha,\lambda)$ 
when 
\begin{equation}\label{thickness assumption 2}
 \sqrt{\frac{\alpha^2+(1-\lambda)^2}{\alpha^2+\lambda^2}}\cdot \frac{2}{1-2r} \le \tau\lp C,\{S_I\}\rp.
\end{equation}
\vskip.12in

The key idea of the proof is as follows. Consider the function $$H:\R^2\times\R^2\rightarrow \R^2$$ defined by
\begin{equation*}
    (x,y) \mapsto \lambda x + (1-\lambda)y + \alpha(y-x)^\perp
\end{equation*}
where $(y-x)^\perp = (-x_2+y_2,x_1-y_1)$.
This function takes as input base vertices $x$ and $y$, makes the convex combination $\lambda x+(1-\lambda)y$, and sums it with an element of the perp space to output the third vertex $z:=H(x,y)$ of a triangle similar to $\mathcal{T}(\alpha,\lambda)$.
So, if we had $A\subset C$ and $B\subset C$ 
disjoint such that $H(A,B)\cap C \neq \emptyset$, then there would exist distinct points $x=(x_1,x_2)\in A$ and $y=(y_1,y_2)\in B$ forming the base of a triangle similar to $\mathcal{T}(\alpha,\lambda)$ with the top vertex at the point
$$t=(t_1,t_2):=\lp \lambda x_1 +(1-\lambda) y_1, \lambda x_2+(1-\lambda) y_2\rp + \alpha \lp -x_2+y_2, x_1-y_1\rp,$$
in $C$.
\vskip.125in

Instead of working directly with $H(A,B)$, we consider the set $f(A)-g(B)$, where 
the functions $f$ and $g$ are defined by identifying the above coordinates into two equations and rearranging them as shown below:
\begin{equation*}
    \lambda x_1 -\alpha x_2 -t_1 = -(1-\lambda)y_1 +\alpha y_2
\end{equation*}
and 
\begin{equation*}
    \alpha x_1 +\lambda x_2 - t_2 = \alpha y_1 -(1-\lambda) y_2.
\end{equation*}
Then we can combine the $x$ coordinates and define the function
\begin{equation}\label{func: f,g}
    f(x_1,x_2):= \lp \lambda x_1-\alpha x_2,\alpha x_1+\lambda x_2\rp,
\end{equation}
and similarly combine the $y$ coordinates and define the function
\begin{equation}
    g(y_1,y_2):= \lp-(1-\lambda)y_1-\alpha y_2, \alpha y_1-(1-\lambda)y_2\rp.
\end{equation}

Now, $$t\in H(A,B) \text{ if and only if } t\in f(A) - g(B),$$
and it suffices to show that $\lp f(A) - g(B) \rp \cap C\neq \emptyset$ for disjoint subsets $A$ and $B$ of $C$. 
\\

To show that $\lp f(A) - g(B) \rp \cap C\neq \emptyset$, we demonstrate that $f(A) -g(B)$
contains a ball $D$ that, in turn, contains a point $c\in C$. We break the proof into steps. 
\vskip.125in

\noindent
\textit{Step 1. Analyze the functions $f$ and $g$:} 
Since $f$ is a linear operator on each variable, it can be interpreted as a $2\times 2$ matrix as follows:
\begin{equation*}
    f (x,y)
    = \begin{pmatrix} 
        \lambda & -\alpha \\ 
        \alpha & \lambda 
       \end{pmatrix}
       \begin{pmatrix}
           x_1 \\ x_2
       \end{pmatrix}.
\end{equation*}

Such a matrix can be re-written to be a scalar times a rotation matrix:
\begin{equation}\label{eq: Rf}
    \begin{pmatrix} 
        \lambda & -\alpha \\ 
        \alpha & \lambda 
    \end{pmatrix}
    =
    \begin{pmatrix}
        s_f & 0 \\
        0 & s_f
    \end{pmatrix}
    \begin{pmatrix}
        \cos(\theta_f) & -\sin (\theta_f) \\
        \sin(\theta_f) & \cos(\theta_f)
    \end{pmatrix}
    :=s_fR_f,
\end{equation}
where 
\begin{equation}\label{eq: defn sf}
    s_f = \sqrt{\alpha^2+\lambda^2}, \quad \cos(\theta_f) = \frac{\lambda}{s_f}, \quad \sin(\theta_f) = \frac{\alpha}{s_f},
\end{equation}
and $\theta_f = \arctan \lp\frac{\alpha}{\lambda}\rp$. 
\\

Similarly for $g$, we can write
\begin{equation*}
    g=
    \begin{pmatrix}
        -(1-\lambda) & -\alpha \\
        \alpha & -(1-\lambda)
    \end{pmatrix}
    =
    \begin{pmatrix}
        s_g & 0\\
        0 & s_g 
    \end{pmatrix}
    \begin{pmatrix}
        \cos(\theta_g) & -\sin (\theta_g) \\
        \sin(\theta_g) & \cos(\theta_g)
    \end{pmatrix}
    :=s_gR_g,
\end{equation*}
where
\begin{equation}\label{eq: defn sg}
    s_g = \sqrt{\alpha^2+(1-\lambda)^2},\quad\cos(\theta_g) = \frac{-(1-\lambda)}{s_g}, \quad\sin(\theta_g) = \frac{\alpha}{s_g},
\end{equation}
and $\theta_g = \arctan\lp \frac{-\alpha}{1-\lambda}\rp+\pi$.
\vskip.125in

Now, the assumed lower bound on thickness in \eqref{thickness assumption 2} can be rephrased as 
\begin{equation}\label{thickness assumption 2 rephrased}
\frac{s_g}{s_f}\cdot \frac{2}{1-2r} \le \tau\lp C,\{S_I\}\rp.
\end{equation}
Since $0\leq\lambda\leq\frac{1}{2}$, we note that 
 $   s_f\leq s_g.$
\vskip.125in

\noindent 
\textit{Step 2. Choose disjoint subsets $A$ and $B$ of $C$:}
By assumption, there exist closed balls that are first-generation children $S_{1_A}$ and $S_{1_B}$, $1\leq 1_A<1_B\leq k_\emptyset$, contained in $\bar{B}\lp0,\frac{1}{2}\rp$ 
such that $S_{1_A}$ and $S_{1_B}$ are disjoint from all other children. 
This implies $t_{1_A}\leq t_{1_B}$, where $t_{1_A}$, $t_{1_B}$ are the radii of $S_{1_A}$, $S_{1_B}$, respectively. 
Set $$A:= S_{1_A}\cap C \,\,\,\text{ and }\,\,\, B:= S_{1_B}\cap C. $$

\noindent
\textit{Step 3. Determine the thickness of $f(A)$ and $g(B)$:}
Recall that $C$ is a compact set constructed by a system of balls $\{S_I\}$ using the Euclidean norm such that $S_\emptyset = \bar{B}(0,1)$ and there exists two first-generation children $S_{1_A}$, $S_{1_B}$ that are disjoint from all other children; i.e., $S_{1_A}\cap S_i=\emptyset$ for all $1\leq i\leq k_\emptyset$, $i\neq 1_A$ and similarly for $S_{1_B}$. 
Consequently, by applying Lemma \ref{lem:tau(A)>1/2tau(C)}, we know
\begin{equation*}
    \tau\lp A,\{S_{1_A,I}\}\rp \geq \frac{1}{2}\tau\lp C,\{S_I\}\rp \quad\text{and}\quad \tau\lp B,\{S_{1_B,I}\}\rp \geq \frac{1}{2}\tau\lp C, \{S_I\}\rp.
\end{equation*}

Moreover, when we take any ball $S_I=\bar{B}( c_I,t_I)$ and apply the function $f$ to it we get $f(S_I)=\bar{B}\lp s_fR_fc_I, s_ft_I \rp$ 
which is a scaled rotation of $S_I$, so it is still a ball in the Euclidean norm, where $R_f$, $s_f$ are defined in \eqref{eq: Rf}, \eqref{eq: defn sf}. 
\vskip.12in

Further, any subset $E= C\cap S_i$ under $f$ will still be an $r$-uniform subset of thickness $\tau\lp f(E)\rp =\tau(E)$ as thickness is rotation, translation, and scalar invariant. 
A similar result is obtained for the function $g$.
Thus we conclude that $f(A)$ and $g(B)$ are generated by the system of balls $\{f\lp S_{1_A,I}\rp\}_I$ and $\{g\lp S_{1_B,I}\rp\}_I$, respectively, which are both $r$-uniformly dense and have thickness given by
\begin{align}
    \tau\lp f(A),\{f(S_{1_A,I})\}\rp &= \tau\lp A, \{S_{1_A,I}\}\rp\geq \frac{1}{2}\tau\lp C,\{S_I\}\rp,\quad \text{and} \label{ineq:triangle thickness} \\ \tau\lp g(B),\{g(S_{1_B,I}\}\rp &= \tau\lp B, \{S_{1_B,I}\}\rp \geq \frac{1}{2}\tau\lp C,\{S_I\}\rp. \nonumber
\end{align}

\noindent
\textit{Step 4. Apply the Gap Lemma to show that $f(A)- g(B)$ contains a disc:}
We have now arrived at the heart of the argument in which the Gap Lemma is used, but we must first make some geometric observations and verify the hypotheses of the lemma. 
\vskip.125in

We briefly recall that
$r\in (0,\frac12)$ is the uniformity constant, 
$s_f$ and $s_g$ are the scaling factors defined in \eqref{eq: defn sf} and \eqref{eq: defn sg}, and
$t_{1_A}$ and $t_{1_B}$ are the radii of $S_{1_A}$ and $S_{1_B}$ respectively.   
Also $h_\emptyset = \max_{x\in S_\emptyset} \dist(x,C)$, 
$h_{1_B}=\max_{x\in S_{1_B}}\dist(x,B)$ were defined in \eqref{defn: hs again} and satisfy $h_{1_B} \le 2h_\emptyset$ from \eqref{eq: h compare A,B}.

\begin{lemma}\label{lem:construction of D - triangle}
    The set $f(A)-g(B)$ contains the disc
    $$D:=\bar{B}\lp f(c_{1_A})-g(c_{1_B}), t_D\rp$$
    where $t_D:=(1-2r)s_ft_{1_A}+s_gt_{1_B}-s_gh_{1_B}$.
\end{lemma}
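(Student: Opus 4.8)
The plan is to follow the template of the proof of Lemma \ref{ball lemma}. Since $t \in f(A) - g(B)$ exactly when $\lp g(B) + t\rp \cap f(A) \neq \emptyset$, it suffices to show that for every $t \in D$ this intersection is nonempty, and I would do this through the chain
\begin{align*}
t \in D \,\,&\Rightarrow\,\, \lp g(S_{1_B}) + t\rp \cap \bar{B}\lp f(c_{1_A}),\, t_D - s_g t_{1_B}\rp \neq \emptyset\\
&\Rightarrow\,\, \lp g(B) + t\rp \cap \lp (1-2r)\cdot f(S_{1_A})\rp \neq \emptyset\\
&\Rightarrow\,\, \lp g(B) + t\rp \cap f(A) \neq \emptyset,
\end{align*}
where, as in Lemma \ref{ball lemma}, the first two implications are elementary geometry and the last is the Gap Lemma. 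Throughout I would use that, because $\dist = \norm{\cdot}_2$ and $f = s_f R_f$, $g = s_g R_g$ are scaled rotations, each carries a ball $\bar{B}(c,\rho)$ to the ball $\bar{B}(f(c), s_f \rho)$, resp. $\bar{B}(g(c), s_g \rho)$, and scales all distances — hence all thicknesses and the quantities $h_{1_A}$, $h_{1_B}$ — by $s_f$, resp. $s_g$.

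For the first implication I would first check $t_D - s_g t_{1_B} = (1-2r) s_f t_{1_A} - s_g h_{1_B} \geq 0$, combining $h_{1_B} \leq 2 h_\emptyset$ from \eqref{eq: h compare A,B} with the thickness bounds \eqref{thickness assumption 2 rephrased} and \eqref{ineq: thickness upper bound} to get $s_g h_{1_B} \leq 2 s_g h_\emptyset \leq (1-2r) s_f t_{1_A}$; then, writing $t = f(c_{1_A}) - g(c_{1_B}) + x$ with $\norm{x}_2 \leq t_D$, the set $g(S_{1_B}) + t = \bar{B}\lp f(c_{1_A}) + x,\, s_g t_{1_B}\rp$ meets $\bar{B}\lp f(c_{1_A}),\, t_D - s_g t_{1_B}\rp$ because the centers lie at distance $\norm{x}_2 \leq t_D$, the sum of the two nonnegative radii. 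For the second implication I would take $z$ in the intersection just produced; since $z - t \in g(S_{1_B})$, the definition of $h_{1_B}$ (scaled by $s_g$) yields $y \in g(B) + t$ with $\norm{z-y}_2 \leq s_g h_{1_B}$, so $\norm{y - f(c_{1_A})}_2 \leq s_g h_{1_B} + \lp t_D - s_g t_{1_B}\rp = (1-2r) s_f t_{1_A}$, i.e. $y \in (1-2r)\cdot f(S_{1_A})$.

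For the third implication I would apply Theorem \ref{thm:gap lemma Rd} to $C^1 := g(B) + t$, generated by $\{g(S_{1_B,I}) + t\}_I$ with root ball $g(S_{1_B}) + t$, and $C^2 := f(A)$, generated by $\{f(S_{1_A,I})\}_I$ with root ball $f(S_{1_A})$. Hypothesis (ii) is exactly the conclusion of the second implication; hypothesis (iii), $\rad(g(S_{1_B})) = s_g t_{1_B} \geq r s_f t_{1_A} = r\,\rad(f(S_{1_A}))$, holds since $s_g \geq s_f$ (as $\lambda \leq \tfrac12$), $t_{1_B} \geq t_{1_A}$, and $r < 1$; hypothesis (iv) holds because $A$ and $B$ inherit $r$-uniformity from $C$ and this passes through the scaled rotations and the translation; and hypothesis (i) follows from \eqref{ineq:triangle thickness} and \eqref{thickness assumption 2 rephrased}, since $\tau(C^1)\tau(C^2) = \tau(A)\tau(B) \geq \tfrac14\tau(C)^2 \geq \frac{s_g^2}{s_f^2 (1-2r)^2} \geq \frac{1}{(1-2r)^2}$. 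The Gap Lemma then gives $C^1 \cap C^2 \neq \emptyset$, and any common point equals $f(a) = g(b) + t$ for some $a \in A$, $b \in B$ with $a \neq b$ (as $S_{1_A} \cap S_{1_B} = \emptyset$), so $t = f(a) - g(b) \in f(A) - g(B)$; since $t \in D$ was arbitrary, $D \subset f(A) - g(B)$.

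The step I expect to require the most care is the bookkeeping of the two distinct scaling factors $s_f \neq s_g$: one must confirm that $f$ and $g$ transport the whole ball system to ball systems in the same norm (this is why $\dist = \norm{\cdot}_2$ is assumed) and rescale distances, thicknesses, and $h_{1_A}, h_{1_B}$ by the correct factor, and then see that the ratio $s_g/s_f \geq 1$ is absorbed correctly both in the nonnegativity of $t_D - s_g t_{1_B}$ and in hypothesis (i) of the Gap Lemma — which is precisely what forces the factor $\sqrt{(\alpha^2 + (1-\lambda)^2)/(\alpha^2 + \lambda^2)} = s_g/s_f$ into the thickness assumption \eqref{thickness assumption 2}.
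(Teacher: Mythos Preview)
Your proposal is correct and follows essentially the same route as the paper: the same three-implication chain, the same verification that $t_D - s_g t_{1_B}\ge 0$ via \eqref{eq: h compare A,B} and \eqref{thickness assumption 2 rephrased}--\eqref{ineq: thickness upper bound}, and the same application of the Gap Lemma with hypotheses checked in the same order. The only cosmetic difference is that you translate $g(B)$ by $+t$ whereas the paper translates $f(A)$ by $-t$, which is immaterial.
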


\begin{proof}
    To prove the lemma, we verify the following implications: 
\begin{align}
    t\in D 
    &\Rightarrow  
    g(S_{1_B}) \cap \bar{B}\lp f(c_{1_A})-t,t_D - s_gt_{1_B} \rp\neq\emptyset \label{imp:tri1} \\
    &\Rightarrow g(B) \cap (1-2r)\cdot \lp f(S_{1_A})-t\rp \neq \emptyset \label{imp:tri2} \\ 
    &\Rightarrow \ g(B) \cap \lp f(A)-t\rp \neq \emptyset \label{imp:tri3}.
\end{align}
Since $g(B)\cap \lp f(A)-t\rp  \neq \emptyset$ if and only if $t\in f(A)-g(B)$, verifying these implications will complete the proof of the lemma. 
\vskip.125in

The first two implications are purely geometric and follow from simple algebraic manipulations.
The final implication utilizes the Gap Lemma and relies on Lemma \ref{lem:tau(A)>1/2tau(C)}. Let $t\in D$.
\vskip.125in

\noindent
\underline{Verifying implication \eqref{imp:tri1}:}
First, we verify that $t_D - s_gt_{1_B} = (1-2r)s_ft_{1_A}-s_gh_{1_B}$ is nonnegative.
Combining the lower bound in \eqref{thickness assumption 2 rephrased} with the upper bound in \eqref{ineq: thickness upper bound}, we have 
\begin{equation}\label{eq: thickness bounds} 
\frac{s_g}{s_f}\cdot\frac{2}{1-2r}\leq \tau\lp C,\{S_I\}\rp \leq \frac{t_{1_A}}{h_\emptyset},
\end{equation}
which implies
$$ 2s_gh_\emptyset \leq (1-2r)s_ft_{1_A}.$$
   
By \eqref{eq: h compare A,B}, we know $h_{1_B} \leq 2h_\emptyset$, and combining this with the previous line implies that 
$$s_gh_{1_B} \leq (1-2r)s_ft_{1_A},$$
so that $t_D - s_gt_{1_B}$ is non-negative. 
\vskip.125in

Moving on, $t\in D$ implies that 
\begin{equation}\label{eq: defn t} 
t=f(c_{1_A})-g(c_{1_B}) +x
\end{equation} for some 
$\norm{x}_2\leq t_D$. 
Recall that $g(S_{1_B}) = \bar{B}\lp g(c_{1_B}), s_gt_{1_B} \rp$. 
We wish to show
$$\bar{B}\lp g(c_{1_B}), s_gt_{1_B}\rp \cap \bar{B}\lp f(c_{1_A})-t, t_D - s_gt_{1_B}\rp \neq\emptyset.$$
Substituting \eqref{eq: defn t} for $t$, this holds if and only if
$$\bar{B}\lp g(c_{1_B}), s_gt_{1_B}\rp \cap \bar{B}\lp g(c_{1_B})-x, t_D - s_gt_{1_B}\rp \neq\emptyset.$$
Shifting everything by $g(c_{1_B})$, this holds if and only if 
$$\bar{B}\lp \vec{0}, s_gt_{1_B}\rp \cap \bar{B}\lp -x,t_D-s_gt_{1_B}\rp \neq\emptyset,$$
which is true since $\norm{x}_2\leq t_D$.
\vskip.125in

\noindent
\underline{Verifying implication \eqref{imp:tri2}:}
Let $t\in D$, and assume $ g(S_{1_B}) \cap \bar{B}\lp f(c_{1_A})-t,t_D - s_gt_{1_B} \rp\neq\emptyset$. \eqref{imp:tri1} 
Let 
$$z\in g(S_{1_B}) \cap \bar{B}\lp f(c_{1_A})-t, t_D - s_gt_{1_B} \rp.$$
Since $z\in g(S_{1_B})$, we know by definition of $h_{1_B}$ that there exists $y \in g(B)$ such that
$$\norm{y-z}_2\leq s_g h_{1_B}.$$
Because $z\in \bar{B}\lp f(c_{1_A})-t,t_D - s_gt_{1_B} \rp$,
we know
\begin{align*}
    \norm{y-\lp f(c_{1_A})-t\rp}_2 
    &\leq \norm{y-z}_2 &&\hspace{-2.5cm}+ \norm{z-\lp f(c_{1_A})-t\rp}_2 \\
    &\leq s_gh_{1_B} &&\hspace{-2.5cm}+ \lp t_D - s_gt_{1_B}\rp \\
    &= s_gh_{1_B} &&\hspace{-2.5cm}+ \lp (1-2r)s_ft_{1_A}-s_gh_{1_B} \rp \\
    &= (1-2r)s_ft_{1_A}   \\
    &< s_f t_{1_A}.  
\end{align*}
Recalling that $f(S_{1_A}) = B(s_f R_f c_{1_A}, s_f t_{1_A})$, we conclude that 
 $y \in g(B) \cap \lp f(S_{1_A})-t \rp$, so that $g(B) \cap \lp f(S_{1_A})-t \rp\neq \emptyset$.
\vskip.125in

\noindent
\underline{Verifying implication \eqref{imp:tri3}:}
Implication \eqref{imp:tri3} follows from applying the Gap Lemma (Theorem \ref{thm:gap lemma Rd}) to the sets $f(A)-t$ and $g(B)$ for $t\in D$, and we need only verify that the hypotheses hold. 
\vskip.125in

First, using the inequalities in \eqref{ineq:triangle thickness} and \eqref{eq: thickness bounds}, we have
\begin{align*}
    \tau\lp f(A),\{f(S_{1_A,I})\}\rp \tau\lp g(B),\{g(S_{1_B,I})\}\rp \geq \frac{s_g^2}{s_f^2}\cdot \frac{1}{(1-2r)^2} \geq \frac{1}{(1-2r)^2},
\end{align*}
for $\alpha$, $\lambda$ in $\mathcal{R}$, which verifies (i) of the Gap Lemma.
\vskip.125in

By implication \eqref{imp:tri2}, we have $g(B) \cap (1-2r)\cdot\lp f(S_{1_A})-t\rp \neq \emptyset $ for $t\in D$, which is hypothesis (ii) of the Gap Lemma. 
\vskip.125in

By assumption, $\rad(S_{1_B})\geq \rad(S_{1_A})$, which implies $\rad\lp f(S_{1_B}) \rp\geq r \, \rad\lp g(S_{1_A})\rp$, and (iii) of the Gap Lemma holds.
\vskip.125in

Lastly, $f(A)$ and $g(B)$ inherit $r$-uniformity from $C$ as $r$-uniformity is translation, rotation, and scalar invariant; hence, (iv) of the Gap Lemma is satisfied.
\vskip.125in

Because all assumptions of the Gap Lemma hold, $\lp f(A) - t\rp \cap g(B)\neq \emptyset$ for $t\in D$. 
This concludes implication \eqref{imp:tri3}.
\end{proof}

\noindent
\textit{Step 5. Show $D$ contains an element of $C$:}
Recall as in Lemma \ref{lem:construction of D - triangle} that
$$D = \bar{B}\lp f(c_{1_A})-g(c_{1_B}), t_D\rp$$
and
$$t_D=(1-2r)s_ft_{1_A}+s_gt_{1_B}-s_gh_{1_B}.$$ 
\vskip.12in 

\begin{lemma}\label{lem:D intersects C - triangle}
    Let $\alpha$ and $\lambda$ be elements of $\mathcal{R}$.
    Then
    $$D\cap C\neq \emptyset.$$
\end{lemma}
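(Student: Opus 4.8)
The plan is to mimic the proof of Lemma \ref{C intersects disc lemma}: exhibit $D$ as a closed ball of radius strictly larger than $h_\emptyset$ whose center lies inside $S_\emptyset$, and then invoke the defining property $h_\emptyset=\max_{x\in S_\emptyset}\dist(x,C)$ to force $D$ to meet $C$. Before anything else I would record the two size constraints on the scale factors that make the geometry close up: since $(\alpha,\lambda)\in\mathcal{R}$ we have $s_g=\sqrt{\alpha^2+(1-\lambda)^2}\le 1$, and since $0\le\lambda\le\frac12$ we have $s_f=\sqrt{\alpha^2+\lambda^2}\le s_g\le 1$ as well as $s_g\ge 1-\lambda\ge\frac12$.

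\textbf{Step 1 (containment $D\subset S_\emptyset$).} Because $f=s_fR_f$ and $g=s_gR_g$ are each a rotation composed with a contraction, and $S_{1_A},S_{1_B}\subset\bar B(0,\tfrac12)$ by hypothesis, we get $f(S_{1_A})\subset\bar B(0,\tfrac{s_f}{2})\subset\bar B(0,\tfrac12)$ and likewise $g(S_{1_B})\subset\bar B(0,\tfrac12)$. Hence, using Lemma \ref{lem:construction of D - triangle} and $A\subset S_{1_A}$, $B\subset S_{1_B}$,
\[
D\subset f(A)-g(B)\subset f(S_{1_A})-g(S_{1_B})\subset\bar B(0,1)=S_\emptyset .
\]

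\textbf{Step 2 (radius bound $t_D>h_\emptyset$).} This is the computational heart of the lemma, and the step I expect to need the most care. First note \eqref{thickness assumption 2 rephrased} gives $\tau(C,\{S_I\})\ge\frac{2}{1-2r}\ge 1$, so Lemma \ref{lem:dist(A)<2dist(C)} and hence \eqref{eq: h compare A,B} apply. Then, using $t_{1_A}\le t_{1_B}$ with $s_f\le s_g$ to bound $s_gt_{1_B}\ge s_ft_{1_A}$, then $h_{1_B}\le 2h_\emptyset$ from \eqref{eq: h compare A,B}, and finally \eqref{eq: thickness bounds} in the form $\tfrac{s_ft_{1_A}}{h_\emptyset}\ge\tfrac{2s_g}{1-2r}$, one obtains
\begin{align*}
t_D&=(1-2r)s_ft_{1_A}+s_gt_{1_B}-s_gh_{1_B}\;\ge\;(2-2r)s_ft_{1_A}-2s_gh_\emptyset\\
&=h_\emptyset\Bigl((2-2r)\tfrac{s_ft_{1_A}}{h_\emptyset}-2s_g\Bigr)\;\ge\;\frac{2s_g}{1-2r}\,h_\emptyset\;>\;h_\emptyset,
\end{align*}
where the last strict inequality uses $s_g\ge\frac12$ and $1-2r<1$.

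\textbf{Step 3 (conclusion).} By Step 1 the center $f(c_{1_A})-g(c_{1_B})$ of $D$ lies in $S_\emptyset$, so $\dist\bigl(f(c_{1_A})-g(c_{1_B}),C\bigr)\le h_\emptyset<t_D$ by Step 2; therefore some $c\in C$ lies in $D$, which proves $D\cap C\neq\emptyset$. Combined with Lemma \ref{lem:construction of D - triangle} this gives $c\in f(A)-g(B)=H(A,B)$, so $c=H(x,y)$ for some $x\in A$, $y\in B$; since $A$ and $B$ are disjoint, $x\ne y$, and since $\alpha>0$ the three points $x,y,c\in C$ form a genuine similar copy of $\mathcal{T}(\alpha,\lambda)$, completing the proof of Theorem \ref{thm: triangles no Cartesian}. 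The only real subtlety throughout is the bookkeeping of the scale factors $s_f,s_g$: the hypothesis $(\alpha,\lambda)\in\mathcal{R}$ is exactly what keeps $s_g\le 1$ (needed for $D\subset S_\emptyset$), while the normalization $\lambda\le\frac12$ is exactly what yields $s_g\ge\frac12$ (needed for the strict radius inequality).
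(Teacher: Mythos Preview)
Your proof is correct and in fact cleaner than the paper's own argument for the lemma as stated. The key simplification is your Step~1: by invoking Lemma~\ref{lem:construction of D - triangle} together with $S_{1_A},S_{1_B}\subset\bar B(0,\tfrac12)$ and $s_f,s_g\le 1$, you get the containment $D\subset f(A)-g(B)\subset\bar B(0,\tfrac{s_f+s_g}{2})\subset S_\emptyset$ for free, so the center of $D$ automatically lies in $S_\emptyset$ and the conclusion follows from $t_D>h_\emptyset$ alone. The paper does \emph{not} establish $D\subset S_\emptyset$; instead it bounds $\|f(c_{1_A})-g(c_{1_B})\|$ directly, proves the sharper radius estimate $t_D\ge h_\emptyset\bigl(1+\tfrac{2r}{1-2r}\bigr)$, and then handles a second case where $D$ may protrude from $S_\emptyset$ by checking that $D\cap S_\emptyset$ still contains a ball of radius $h_\emptyset$. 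The reason for the paper's extra work is hinted at in the remark following Corollary~\ref{cor:equilateral triangle}: the authors want the argument to go through under the weaker containment $S_{1_A},S_{1_B}\subset\bar B\bigl(0,\tfrac12+t_1-\tfrac{h_\emptyset x}{2s_f}\bigr)$, and in that generality your Step~1 shortcut no longer gives $D\subset S_\emptyset$. Under the hypothesis actually assumed in Theorem~\ref{thm: triangles no Cartesian}, however, your route is both valid and more economical; your radius bound $t_D\ge\tfrac{2s_g}{1-2r}h_\emptyset$ (via $s_gt_{1_B}\ge s_ft_{1_A}$) is slightly weaker than the paper's but entirely sufficient.
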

\begin{proof}
    We will show that the center of the disc $D$ lies inside the closed disc $S_\emptyset=\bar{B}(0,1)$, and the radius $t_D$ is larger than $2h_\emptyset$, so $D$ contains a disc of radius $h_\emptyset$ inside $S_\emptyset$.
Since this disc ball is contained in $S_\emptyset$, it must contain a point in $C$ by definition of $h_\emptyset$. From this, we conclude that $D$ contains a point in $C$.
    \vskip.125in
    
    We proceed by first analyzing the center and radius of $D$.
    \vskip.125in

The center of $D$ is $f(c_{1_A})-g(c_{1_B})$. 
    A consequence of the choice of the sets $S_{1_A}$ and $S_{1_B}$ is that it sufficiently minimizes the distance between $f(c_{1_A})$ and $g(c_{1_B})$.
    Recall that, by assumption, $S_{1_A}$, $S_{1_B}$ are both contained in $\bar{B}\lp0,\frac{1}{2}\rp$.
    We can actually take a larger---though uglier---ball, and in this proof we will suppose that $S_{1_A}$, $S_{1_B}$ are contained inside the ball $\bar{B}\lp0,\frac{1}{2}+t_1-\frac{h_\emptyset x}{2s_f}\rp$ where $x=\max\left\{1-\frac{2r}{1-2r},0\right\}$.
    \vskip.125in

    Note: $\bar{B}\lp0,\frac{1}{2}\rp\subset \bar{B}\lp 0,\frac{1}{2}+t_1-\frac{h_\emptyset x}{2s_f}\rp$.
    This can be seen by combining \eqref{thickness assumption 2 rephrased} and \eqref{ineq: thickness upper bound}:
    \begin{align*}
        \frac{s_g}{s_f}\frac{2}{1-2r} &\leq \frac{t_1}{h_\emptyset}.
    \end{align*}
    Rearranging then gives 
    \begin{align*}
        h_\emptyset \frac{s_g}{s_f}\frac{2}{1-2r} &\leq t_1.
    \end{align*}
    Because 
    \begin{align*}
        x &<1, & \frac{1}{2} &< s_g, & 1 &< \frac{2}{1-2r},
    \end{align*}
    we can combine the above inequalities to see
    \begin{equation*}
        h_\emptyset \frac{x}{2s_f} < h_\emptyset \frac{s_g}{s_f}\frac{2}{1-2r} < t_1.
    \end{equation*}

    Returning to our analysis of the center $f(c_{1_A})-g(c_{1_B})$, observe that the centers of $c_{1_A}$, $c_{1_B}$ of $S_{1_A}$, $S_{1_B}$ satisfy
    $$\norm{c_{1_A}}\leq \frac{1}{2}-\frac{h_\emptyset x}{2s_f} \quad \text{and}\quad \norm{c_{1_B}}\leq \frac{1}{2}-\frac{h_\emptyset x}{2s_f}.$$
    Because $f$, respectively $g$, rotates and scales by $s_f\leq 1$, respectively $s_g\leq 1$, we know
    $$\norm{f(c_{1_A})}\leq \frac{1}{2}s_f -\frac{h_\emptyset x}{2}  \quad \text{and}\quad \norm{g(c_{1_B})}\leq \frac{1}{2}s_g-\frac{h_\emptyset xs_g}{2s_f} \leq \frac{1}{2}s_g-\frac{h_\emptyset x}{2}.$$
    Thus, 
    \begin{equation}\label{ineq: approximate location of vertex}
        \norm{f(c_{1_A})-g(c_{1_B})}\leq \frac{1}{2}(s_f+s_g) - h_\emptyset x \leq 1-h_\emptyset x.
    \end{equation}
    where the last inequality is from maximizing $s_f+s_g = \sqrt{\alpha^2+\lambda^2}+\sqrt{\alpha^2+(1-\lambda)^2}$ on $\bar{\mathcal{R}}$,
    and the center of $D$, $f(c_{1_A})-g(c_{1_B})$, is contained in $S_\emptyset$.
    \vskip.125in

    Next, we analyze the radius of $D$. 
    Observe that
    \begin{align}
        t_D 
        &= (1-2r)s_ft_{1_A} + s_gt_{1_B}-s_gh_{1_B} & \nonumber\\
        & \geq \lp (1-2r)s_f + s_g \rp t_1 -2s_gh_\emptyset &\text{$t_1\leq t_{1_A},t_{1_B}$, and $h_{1_B}\le 2h_\emptyset$ by \eqref{eq: h compare A,B}} \nonumber\\
        &= h_\emptyset\lb \lp (1-2r)s_f+s_g \rp\frac{t_1}{h_\emptyset} -2s_g \rb \nonumber\\
        &\geq h_\emptyset\lb \lp (1-2r)s_f + s_g\rp \frac{s_g}{s_f}\frac{2}{(1-2r)}-2s_g\rb & \text{ applying }\eqref{eq: thickness bounds} \nonumber\\
        &= h_\emptyset  \frac{s_g^2}{s_f} \frac{2}{(1-2r)} \nonumber\\
        &= 2h_\emptyset \frac{s_g^2}{s_f} + 2h_\emptyset \frac{s_g^2}{s_f} \frac{2r}{(1-2r)}. & \label{ineq:tD rough bound} 
    \end{align}
    \textbf{Claim:} $\frac{s_g^2}{s_f} = \frac{\alpha^2+(1-\lambda)^2}{\sqrt{\alpha^2+\lambda^2}}$ is minimized when $\alpha=0$, $\lambda=\frac{1}{2}$ in $\bar{\mathcal{R}}$ with minimum value $\frac{1}{2}$.
    \vskip.125in

    Then \eqref{ineq:tD rough bound} becomes 
    \begin{align}
        t_D &\geq 2h_\emptyset \frac{s_g^2}{s_f} + 2h_\emptyset \frac{s_g^2}{s_f} \frac{2r}{(1-2r)} &\nonumber \\
        &\geq h_\emptyset + h_\emptyset \frac{2r}{1-2r} \label{ineq:tD bound}\\
        &> h_\emptyset \nonumber
    \end{align}

    Now if $D\subseteq S_\emptyset$, then $D$ is itself a ball of radius larger than $h_\emptyset$ by \eqref{ineq:tD bound}, so $D\subset S_\emptyset$ contains a point $c\in C$. 
    \vskip.125in

    If $D\not\subseteq S_\emptyset$, then it must be the case that $|f(c_{1_A}) - g(c_{1_B}) + t_D|>1$, and 
    we will use the lower bound on the radius \eqref{ineq:tD bound} and upper bound on the norm of the center \eqref{ineq: approximate location of vertex} below. 
    \vskip.125in

    If 
      \begin{equation*}
         \left|\norm{f(c_{1_A})-g(c_{1_B})}_2 - t_D\right|
         \le 1- 2h_\emptyset,
    \end{equation*}
    then it follows that the disk $D=\bar{B}\lp f(c_{1_A})-g(c_{1_B}),t_D\rp$ intersects $S_\emptyset=\bar{B}(0,1)$ in such a way that the intersection contains a ball of radius $h_\emptyset$.
        \vskip.125in

    Hence, it remains to show that 
    \begin{equation}\label{goal}
        1+t_D -2h_\emptyset- \norm{f(c_{1_A})-g(c_{1_B})}_2 \geq 0.
    \end{equation}

    Indeed,
    \begin{align*}
        1+t_D -2h_\emptyset- \norm{f(c_{1_A})-g(c_{1_B})}_2 
        &\geq 1+\lp h_\emptyset + h_\emptyset\frac{2r}{1-2r}\rp -2h_\emptyset - \lp1-h_\emptyset x\rp \\
        &= h_\emptyset \lp \frac{2r}{1-2r} - 1\rp +h_\emptyset x \\
        &\geq 0,
    \end{align*}
    because $0<r<\frac{1}{2}$ and $x = \max\left\{1-\frac{2r}{1-2r},0\right\}$. This is where our choice of $x$ in the radius comes from. 
    \vskip.125in
    
    Thus, \eqref{goal} is confirmed, and we conclude $D\cap S_\emptyset$ contains a ball of radius $h_\emptyset$. 
    This ball of radius $h_\emptyset$ is contained in $S_\emptyset$, so it must contain a point in $C$ by definition of $h_\emptyset$. 
    Therefore, $D$ contains a point in $C$.
\end{proof}

Combining Lemmas \ref{lem:construction of D - triangle} and \ref{lem:D intersects C - triangle}, the disk $D$ is contained in $f(A)-g(B)$ and $D\cap C\neq \emptyset$. 
This implies that there is an element of $c$ in $f(A)-g(B)$.

\section{Examples}\label{sec:examples}

\subsection{Convex Combinations in $\R^d$}

As Yavicoli illustrated in \cite{Yavicoli_Gap_Lemma_Rd}, compact sets can be constructed using a system of balls, including self-similar sets where each generation of children are equally spaced in a grid. 
For such an example, the existence of an arithmetic progression is immediate regardless of the thickness as there will be three children in a row (or column) all containing the exact same points through self-similarity. 
\vskip.125in

In what follows, we provide an example of a compact set using the infinity norm $\|\cdot\|_\infty$ which contains a $3$--term arithmetic progression that is not obvious.
Note, one can modify this example to work with other norms by re-arranging the norm balls to their respective optimal packing shape, such as a hexagonal packing arrangement for $\|\cdot\|_2$; this may introduce constants that could alter the stated thickness condition. 
\vskip.125in

We first construct a self-similar compact set $C$, and then we introduce randomness to the construction. 
Let $S_\emptyset = \bar{B}(0,1)$ be a ball in the infinity norm. 
Let $n^2$ be the number of children in each generation and $\rho$ be the fixed radius for all of the first generation children.
We take the $n^2$ children to be equidistant in an $n\times n$ grid, where the children in a generation are all distance $d$ apart from each other and distance $d/2$ away from the boundary of $\bar{B}(0,1)$, as shown in Figure \ref{fig:conv combo ex 1}.
\begin{figure}[h!]
    \centering    {\includegraphics[width=3in]{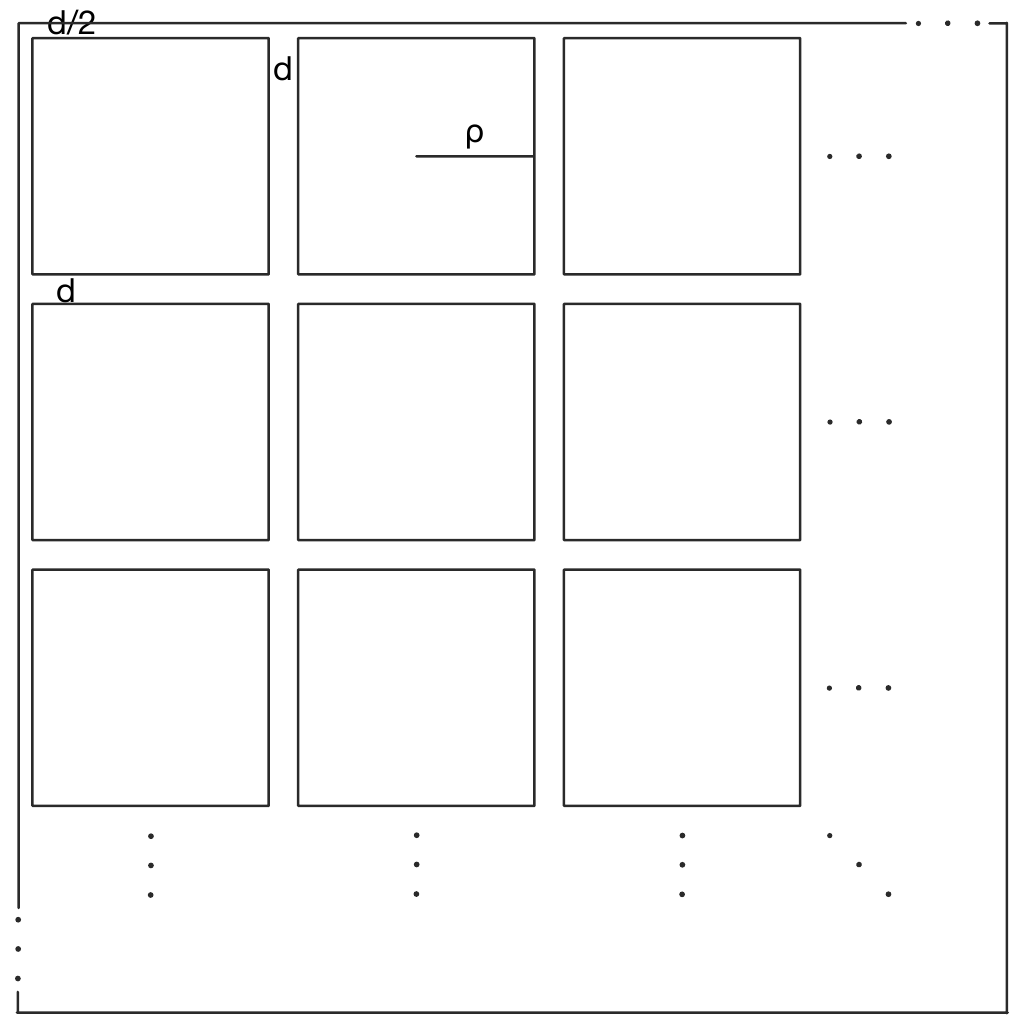}}
    \caption{Parent square $S_\emptyset$ and first-generation children of radius $\rho$ for self-similar compact set $C$ in the infinity norm $\|\cdot\|_\infty$.}\label{fig:conv combo ex 1}
\end{figure}
Note that we must have $$2\rho n+nd=2$$ because $S_\emptyset=B(0,1)$.
A compact set such as this can be described by an iterated function system $f_{i}(x) = \rho x + t_{i}$ where the $t_{i}$ are the equidistributed centers of each child for $1\leq i\leq n^2$. 
By construction, $f_i\lp \bar{B}(0,1)\rp \subset \bar{B}(0,1)$ for all $1\leq i\leq n^2$.
We label these sets $S_{i_1\cdots i_{j}} = f_{i_1}\cdots f_{i_j}(\bar{B}(0,1))$. 
\vskip.125in

As previously mentioned, such a self-similar set has ``obvious'' $3$-term arithmetic progressions and is a trivial illustration of our proof.
However, we can introduce randomness to this IFS to make the existence of a $3$-term arithmetic progression nontrivial. 
\vskip.125in

We modify our previous construction by starting with $S_\emptyset = \bar{B}(0,1)$ and
let $f_{i}^k = \rho x + \tilde{t}_{i}^k$ where $\tilde{t}_{i}^k = t_{i} + u_{i}^k$ such that $|u_{i}^k|< \frac{d}{2}$ is random for all $1\leq i\leq n^2$ and $k\geq1$. 
Even with the added randomness, we see that $C$ is $(2\rho+d)$-uniformly dense in Figure \ref{fig:conv combo ex 2}.
\vskip.125in

\begin{figure}
    \centering    {\includegraphics[width=3in]{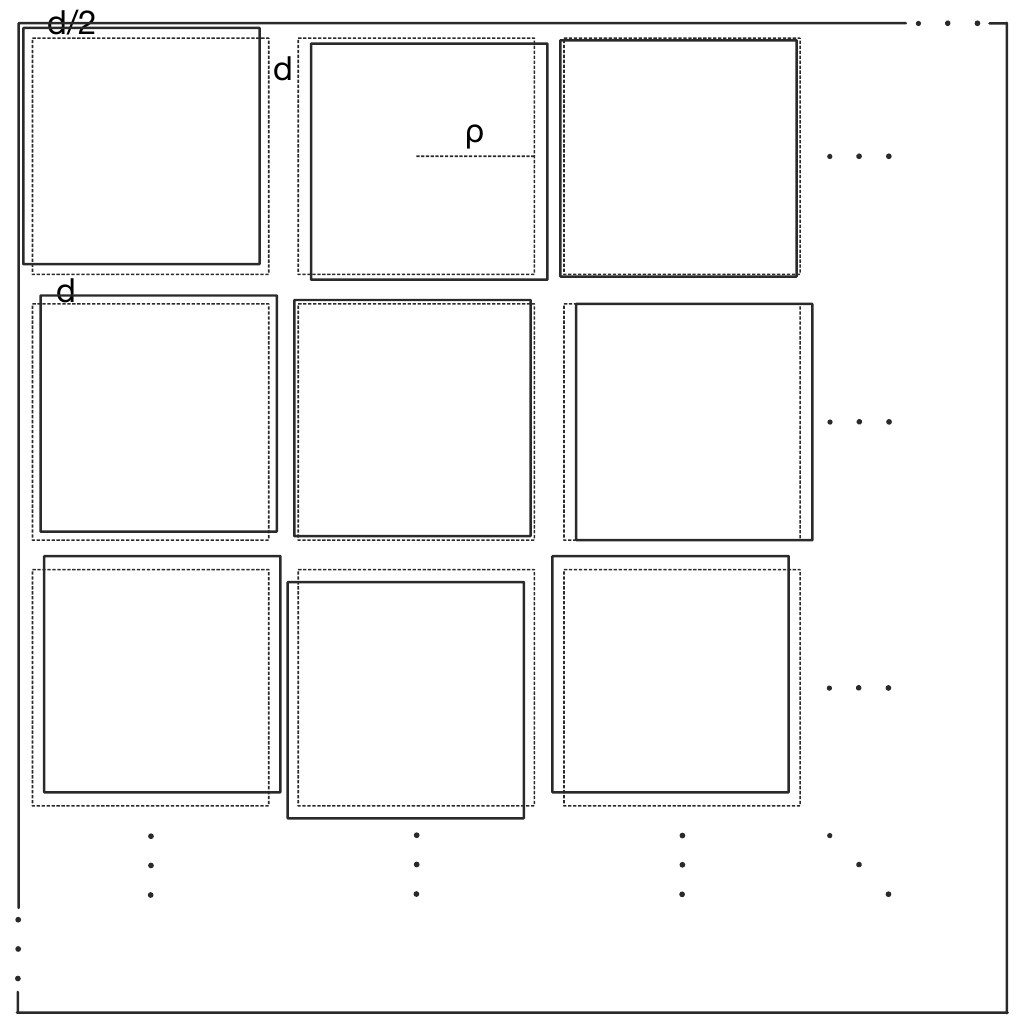}}
    \caption{Parent square $S_\emptyset$ and first-generation children of radius $\rho$ for randomly perturbed self-similar compact set $C$ in the infinity norm $\|\cdot\|_\infty$.}\label{fig:conv combo ex 2}
\end{figure}

Then by construction 
$$\max_{x\in S_\emptyset} \dist\lp x,\bigcup_{i} S_i^k \rp\leq d,$$
because each $S_i^k$ is a maximum distance $d$ apart.
As this is repeated at each level but scaled to $\rho$, in general we have
$$\max_{x\in S_I^K} \dist\lp x,\bigcup_{i} S_{I,i}^{K,k} \rp\leq \rho^{\ell(I)} d.$$
Consequently, 
$$h_I(C) \leq d\rho^{\ell(I)}  + d\rho^{\ell(I)+1}+ d\rho^{\ell(I)+2}+\cdots = \frac{d\rho^{\ell(I)}}{1-\rho}.$$
By construction,
$$\min_i \rad (S_{I,i}^{K,k}) = \rho ^{\ell(I)+1}. $$
This gives a lower bound on the thickness of our compact set:
\begin{equation}\label{ineq:ex tau}
    \tau\lp C,\{S_I\}\rp\geq \frac{\rho(1-\rho)}{d}.
\end{equation}
\vskip.125in

Corollary \ref{cor: higher dim 3AP from convex combo} then gives the existence of $3$ APs in these compact sets $C$ when $0 < 2\rho + d < \frac{1}{2}$ and $\tau\lp C,\{S_I\}\rp \geq \frac{2}{1-4\rho-2d}$.
In particular, we can take $n=10$, $\rho=0.095$, and $d=0.01$. 
Then by inequality \eqref{ineq:ex tau}, $$\tau\lp C,\{S_I\}\rp \geq \frac{0.095(1-0.095)}{0.01} = 8.5975,$$
which is larger than the Corollary \ref{cor: higher dim 3AP from convex combo} requirement of 
$$\frac{2}{1-4\rho-2d} = \frac{2}{1-4\cdot 0.095 -2\cdot 0.1} = 3.\bar{3}.$$
Thus $C$ contains a $3$-term arithmetic progression.
In fact, we can apply Theorem \ref{thm: higher dim convex combo} to see that $C$ contains a homothetic copy of all convex combinations of the form $\{a,\lambda a+(1-\lambda)b, b\}$ for $\lambda\in[0.27938814,0.5]$.
\vskip.125in

Other $n$, $\rho$, and $d$ values can be chosen to construct a different $C$ which also contain $3$-term arithmetic progressions or convex combinations.
\vskip.125in

Additional examples can be constructed using Yavicoli's method \cite[\S 4.1 Lemma 7]{Yavicoli_Gap_Lemma_Rd}.

\subsection{Triangles in $\R^2$}

We now construct a compact set $C\in\R^2$ using the Euclidean norm to which we can apply Theorem \ref{thm: triangles no Cartesian} and Corollary \ref{cor:equilateral triangle} to get the existence of nondegenerate $3$-point configurations. 
Theorem \ref{thm: higher dim convex combo} and Corollary \ref{cor: higher dim 3AP from convex combo} will also apply and give the existence of linear configurations.
\vskip.125in

We begin by taking the best-known packing of $55$ congruent circles inside the circle $S_\emptyset=\bar{B}(0,1)$, as determined by \cite{circle_packings_graham1998dense} and illustrated in Figure \ref{fig:triangle ex 1}.
\begin{figure}
    \centering    
    {\includegraphics[width=3in]{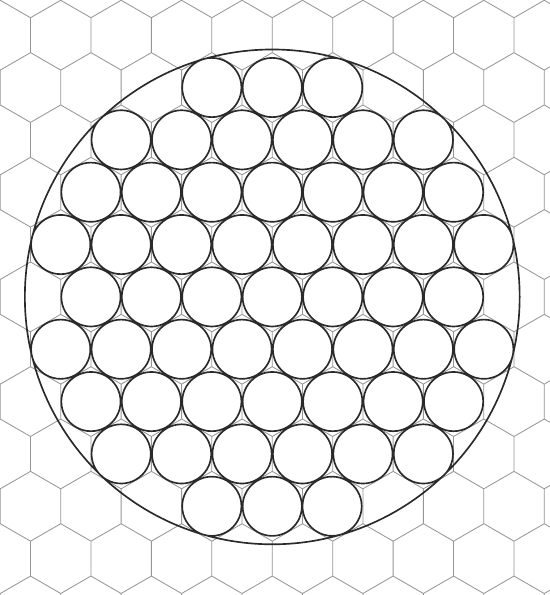}}
    \caption{Best known packing \cite{circle_packings_graham1998dense} of $55$ congruent circles in $\bar{B}(0,1)$.}\label{fig:triangle ex 1}
\end{figure}
Observe that these circles, call them $S_1, S_2, \cdots S_{55}$, are in a hexagonal packing arrangement, the most dense packing arrangement for circles.
This forces all the congruent circles, which will become our first-generation children, to have radii $\rho \approx 0.12179$.
At this moment, notice that
\begin{equation*}
    \max_{x\in \bar{B}(0,1)}\dist\lp x,\cup_1^{55} S_i\rp > \rho,
\end{equation*}
and this would cause our thickness to be less than or equal to $1$. 
Hence, we add $30$ additional congruent circles $S_{56},\cdots,S_{85}$ around the edges, shown in Figure \ref{fig:triangle ex 2}.
This provides the better bound
\begin{equation}\label{ineq:tri max dist up bound}
    \max_{x\in \bar{B}(0,1)}\dist\lp x,\cup_1^{85} S_i\rp = \frac{2-\sqrt{3}}{\sqrt{3}}\rho.
\end{equation}
\begin{figure}
    \centering    
    {\includegraphics[width=3in]{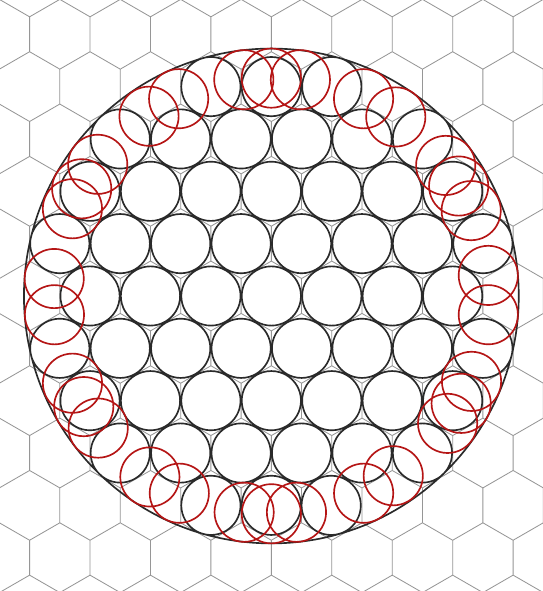}}
    \caption{Best known packing of $55$ congruent circles in $\bar{B}(0,1)$ with congruent circles added to minimize $\max_{x\in\bar{B}(0,1)}\dist(x,C)$.}\label{fig:triangle ex 2}
\end{figure}
Using this structure, we can construct the compact set $C\subset\R^d$ by translating, scaling by $\rho$, (and optionally rotating) a copy of Figure \ref{fig:triangle ex 2} into each $S_i$ and repeating at every level of the construction. 
The resulting compact set $C$ generated by $\{S_I\}_I$ is $\frac{2+\sqrt{3}}{\sqrt{3}}\rho$-uniformly dense, or $0.26243$-uniformly dense. 
Using self-similarity with inequality \eqref{ineq:tri max dist up bound} gives
\begin{equation*}
    \max_{x\in S_I}\dist(x,C) \leq \frac{2-\sqrt{3}}{\sqrt{3}} \frac{\rho^{\ell(I)}}{1+\rho}.
\end{equation*}
Because 
\begin{equation*}
    \min_i\rad(S_{I,i}) = \rho^{\ell(I)+1},
\end{equation*}
we know
\begin{equation*}
    \tau\lp C,\{S_I\}\rp \geq \frac{\rho(1+\rho)}{\frac{2-\sqrt{3}}{\sqrt{3}} \rho}\approx 7.25137.
\end{equation*}

While this result establishes the existence of compact sets in $\R^2$ using the Euclidean norm of sufficient thickness, it does not satisfy the requirement in Theorem \ref{thm: triangles no Cartesian} that there are two first-generation children that are disjoint from the others. 
We remedy this by constructing a new compact set $\tilde{C}$ from the compact set $C$ generated by $\{S_I\}_I$ by taking two first-generation balls $S_{1_A}$, $S_{1_B}$ in $\bar{B}(0,\frac{1}{2})$, as illustrated in Figure \ref{fig:triangle ex 3}, and scaling them, and all their children, by a factor of $\gamma$ for $0<\gamma<1$; e.g., $\gamma\cdot S_{1_A} = \bar{B}(c_{1_A},\gamma t_{1_A})$.
\begin{figure}
    \centering    
    {\includegraphics[width=3in]{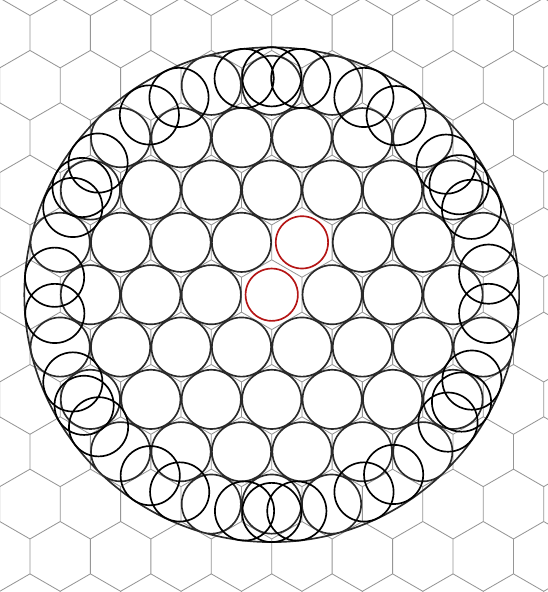}}
    \caption{Modifying construction so two congruent circles in $\bar{B}(0,\frac{1}{2})$ are disjoint.}\label{fig:triangle ex 3}
\end{figure}
We then construct the compact set $\tilde{C}$ as before with the new generating system of balls $\{\tilde{S}_I\}$. 
This then shifts inequality \eqref{ineq:tri max dist up bound} to become
\begin{equation*}
    \max_{x\in\bar{B}(0,1)}\dist\lp x,\cup_1^{81}\tilde{S}_i\rp \leq \frac{2-\sqrt{3}}{\sqrt{3}}\rho +(1-\gamma)\rho = \lp\frac{2}{\sqrt{3}}-\gamma\rp\rho.
\end{equation*}
For words $I\neq\emptyset$ and not starting with $1_A$ or $1_B$, we still have
\begin{equation*}
    \max_{x\in \tilde{S}_I}\dist(x,\tilde{C}) \leq \frac{2-\sqrt{3}}{\sqrt{3}} \frac{\rho^{\ell(I)}}{1+\rho} \quad\text{and}\quad \min_i\rad(\tilde{S}_{I,i}) = \rho^{\ell(I)+1}.
\end{equation*}
However, for for words $I\neq\emptyset$ starting with $1_A$ or $1_B$, 
\begin{equation*}
    \max_{x\in \tilde{S}_I}\dist(x,\tilde{C}) \leq \gamma \frac{2-\sqrt{3}}{\sqrt{3}} \frac{\rho^{\ell(I)}}{1+\rho} \quad\text{and}\quad \min_i\rad(\tilde{S}_{I,i})=\gamma\rho^{\ell(I)+1},
\end{equation*}
and lastly for $I=\emptyset$,
\begin{equation*}
    \max_{x\in \tilde{S}_\emptyset}\dist(x,\tilde{C}) \leq \lp1-\gamma\rp\rho + \frac{2-\sqrt{3}}{\sqrt{3}} \frac{\rho}{1+\rho} \quad\text{and}\quad \min_i \rad(\tilde{S}_i) = \gamma\rho.
\end{equation*}

Consequently, the thickness of $\tilde{C}$ generated by this system is
\begin{equation*}
    \tau\lp \tilde{C},\{\tilde{S}_I\}\rp \geq \frac{\gamma\rho}{\lp1-\gamma\rp\rho + \frac{2-\sqrt{3}}{\sqrt{3}} \frac{\rho}{1+\rho}}.
\end{equation*}
In the case of $\gamma=0.99999$, $\tau(\tilde{C},\{\tilde{S}_I\}) \geq 7.25077$, so
\begin{equation*}
    \tau(\tilde{C},\{\tilde{S}_I\}) \geq \sqrt{ \frac{\alpha^2+(1-\lambda)^2}{\alpha^2+\lambda^2} }\cdot\frac{2}{1-2\cdot 0.262421}
\end{equation*}
for all $(\alpha,\lambda)\in \mathcal{R}\cap (0,\sqrt{3}/2]\times[3/10,1/2]$.
Thus by Theorem \ref{thm: triangles no Cartesian}, $\tilde{C}$ contains a similar triangle to $\mathcal{T}(\alpha,\lambda)$ for all $(\alpha,\lambda)\in \mathcal{R}\cap (0,\sqrt{3}/2]\times[3/10,1/2]$.
In particular, $\tilde{C}$ contains a similar copy of an equilateral triangle.

\begin{remark}
    A similar construction could be used for the optimal packing of $31$ congruent circles in $\bar{B}(0,1)$, which also utilizes a hexagonal packing arrangement.
    This thickness would naturally be a little smaller than our example.
\end{remark}

\section{Appendix}\label{appendix}
In this appendix, we consider a special case in which Lemma \ref{lem:tau(A)>1/2tau(C)} can be improved by a factor of $1/2$ and state the main results with a slightly improved thickness threshold. 
\vskip.125in

Consider a compact set $C\subset\R$ with the open intervals $(G_n)$ making up $\R\setminus C$ ordered by decreasing length.
Suppose additionally that $\tau(C)\geq 1$. 
Thus when we remove $G_1$ from $C$, we are left with the two intervals $L_1$ and $R_1$. 
Let $A=L_1\cap C$ or $A=R_1\cap C$, so $A$ is a compact set with the open intervals $(G_n')$ making up $\R\setminus A$. 
Observe that the $(G_n')$ are a subsequence of the $(G_n)$.
Then
\begin{equation*}
    \tau(A) = \inf_{n\in\N} \frac{ \min\{|L_n'|, |R_n'|\} }{|G_n'|} \geq \inf_{n\in\N}\frac{ \min\{|L_n|, |R_n|\} }{|G_n|} = \tau(C).
\end{equation*}
Thus in $\R$ we can construct subsets $A$ of $C$ such that 
$$\tau(A)\geq \tau(C).$$
A direct consequence of this is that if $C\subset \R$ satisfies $\tau(C)\geq 1$ then, at minimum, $C$ contains countably many distinct\footnote{The proof of Proposition \ref{prop:3AP in R} given in \cite{Yavicoli_Survey} specifies the middle element of the arithmetic progression as either the right endpoint of $L_1$ or the left endpoint of $R_1$, and this ensures that the arithmetic progression found in $C$ is distinct from the $3$ AP in $A=C\cap L_1$, which is distinct from the $3$ AP in $C\cap R_1$, et cetera.} $3$-term arithmetic progressions as we can apply Proposition \ref{prop:3AP in R} to $C\cap L_n$ for all $n$. 
\vskip.125in

On the other hand, every time Theorem \ref{thm: higher dim convex combo} is applied to some $C\subset\R^d$ with $A=C\cap S_i$, the lower bound on the thickness decreases proportionally by $\frac{1}{2}$,
$$\tau\lp A,\{S_I\}\rp \geq \frac{1}{2}\tau\lp C,\{S_I\}\rp.$$
Hence, Theorem \ref{thm: higher dim convex combo} can only be applied a finite number of times, and we cannot quantify the number of $3$-term arithmetic progressions beyond the existence of one $3$ AP.
\vskip.125in

In order to have a compact set $C\subset\R^d$ and a subset $A=C\cap S_{1_A}$ satisfy the condition $\tau(A)\geq\tau(C)$, we would need a constraint that forces $$\max_{x\in S_{1_A}}\dist(x,C) = \max_{x\in S_{1_A}}\dist(x,A).$$
One way to achieve this is as follows:

\begin{lemma}[An improvement for the thickness of a subset in case of a distance child]\label{lem:tau(A)>tau(C)}
    Let $C$ be a compact set in $(\R^d,\dist)$ generated by the system of balls $\{S_I\}_I$. 
    Suppose that there exists some $1\leq \ell\leq k_\emptyset$ such that 
    \begin{equation}\label{ineq: h_empty < dist(S's)}
        \max_{x\in S_{\ell}}\dist(x,C) <  \min_{\substack{1 \leq i\leq k_\emptyset\\i\neq \ell}}\dist (S_i,S_\ell).
    \end{equation}
    Then $$\tau\lp C\cap S_\ell, \{S_{\ell,I}\}\rp \geq \tau\lp C, \{S_I\}\rp.$$
\end{lemma}
\vskip.125in

\begin{proof}
    Let everything be as above. 
    By definition, there exists some $c\in C$ and $y\in S_\ell$ such that 
    $$\dist(c,y) = \max_{x\in S_\ell} \dist(x,C).$$
    As $c\in C$ and $S_i \cap S_j=\emptyset$ for all $i\neq j$, there exists exactly one $S_i$ containing $c$. 
    As
    \begin{align*}
        \dist(c,y) = \max_{x\in S_\ell} \dist(x,C) <\min_{\substack{1 \leq i\leq k_\emptyset\\i\neq \ell}}\dist (S_i,S_\ell). 
    \end{align*}
    we know $c\in S_\ell$ which means $c \in A=C\cap S_\ell$. 
    Thus
    $$\max_{x\in S_\ell}\dist(x,C) = \max_{x\in S_\ell} \dist(x,A).$$
    This allows us to conclude
    \begin{align*}
        \tau\lp A, \{S_{1_A,I}\}\rp &= \inf_{n\geq 1} \inf_{\substack{\ell(I)=n\\ I=\{1_A,\cdots\}}} \frac{\min_i \rad(S_{I,i})}{\max_{x\in S_{I}} \dist(x,A)}
        = \inf_{n\geq1} \inf_{\substack{\ell(I)=n\\I=\{1_A,\cdots\} }} \frac{\min_i \rad(S_{I,i})}{\max_{x\in S_{I}} \dist(x,C)} \\
        &\geq \inf_{n\geq0} \inf_{\ell(I)=n} \frac{\min_i \rad(S_{I,i})}{\max_{x\in S_I} \dist(x,C)}
        =\tau\lp C, \{S_I\}\rp.
    \end{align*}
\end{proof}

Thus, in order to adapt the arguments of Theorem \ref{thm: higher dim convex combo}, we would need two distinct $\ell$ values, $1_A$ and $1_B$ (see Remark \ref{rmk:disjoint assumption} for why), satisfying inequality \eqref{ineq: h_empty < dist(S's)} to construct $A= C\cap S_{1_A}$ and $B=C\cap S_{1_B}$. 
Moreover, the inequalities \eqref{eq: h compare A,B} and Lemma \ref{lem:tau(A)>1/2tau(C)} would be replaced by the following equations and inequalities
\begin{equation*}
    h_{1_A,I}(A) = h_{1_A,I}(C) \quad\text{and}\quad h_{1_B,I}(B) = h_{1_B,I}(C)
\end{equation*}
and 
\begin{equation*}
    \tau\lp A, \{S_{1_A,I}\}\rp \geq \tau\lp C, \{S_I\}\rp \quad \text{and}\quad \tau\lp B, \{S_{1_B,I}\}\rp \geq \tau\lp C, \{S_I\}\rp. 
\end{equation*}
These new equations and inequalities will have a minor impact the proof of Lemma \ref{ball lemma} and a major impact in the proof of Lemma \ref{C intersects disc lemma}; in particular, to ensure that $t_D\geq h_\emptyset$ we are able to reduce the thickness constraint to
\begin{equation*}
    \tau\lp C,\{S_I\}\rp \geq \frac{3(1-\lambda)}{2\lambda(1-2r)}.
\end{equation*}

\begin{theorem}\label{thm: conv combo reduced thickness}
    Let $C$ be a compact set in $(\R^d,\dist)$ generated by the system of balls $\{S_I\}_I$ such that $C$ is $r$-uniformly dense where $ 0<r<\frac{1}{2}$.
    Let $\la\in (0,\frac12]$,
    and suppose that 
    $$\tau\lp C,\{S_I\}\rp \geq \frac{3(1-\lambda)}{2\lambda(1-2r)}.$$   
    Suppose that there exist distinct first generation children $S_{1_A}$, $S_{1_B}$ such that 
    \begin{equation*}
        \max_{x\in S_{\ell}}\dist(x,C) <  \min_{\substack{1 \leq i\leq k_\emptyset\\i\neq \ell}}\dist (S_i,S_\ell),
    \end{equation*}
    for $\ell=1_A,1_B$.
    Then $C$ contains a $3$-point convex combination of the form $$\{a,\lambda a+(1-\lambda)b,b\}.$$    
\end{theorem}

A similar argument can be made for Theorem \ref{thm: triangles no Cartesian}, where we additionally take $x' = \max\left\{ \frac{7}{4}-\frac{3}{4(1-2r)},0 \right\}$ in place of $x = \max\left\{1-\frac{2r}{1-2r},0\right\}$.

\begin{theorem}
    Let $\mathcal{T}$ denote the vertices of any triangle in $\R^2$, and let $\mathcal{T}(\alpha,\lambda)$ be a triangle similar to $\mathcal{T}$ resulting from Lemma \ref{lem:triangle Talpha,lambda} for some $\alpha$, $\lambda$ in $\mathcal{R}$. 
    Let $C\subset\R^2$ be a compact set generated by the system of balls $\{S_I\}$ in the Euclidean norm such that $C$ is $r$-uniformly dense for some $0<r<\frac{1}{2}$. 
    Suppose there exists distinct first-generation children $S_{1_A}$ and $S_{1_B}$, $1\leq 1_A<1_B\leq k_\emptyset$, contained in $\bar{B}\lp0,\frac{1}{2}\rp$ such that 
    \begin{equation*}
        \max_{x\in S_{\ell}}\dist(x,C) <  \min_{\substack{1 \leq i\leq k_\emptyset\\i\neq \ell}}\dist (S_i,S_\ell),
    \end{equation*}
    for $\ell=1_A,1_B$.
    Further, suppose $$\tau\lp C,\{S_I\}\rp \geq \sqrt{\frac{\alpha^2+(1-\lambda)^2}{\alpha^2+\lambda^2}}\cdot \frac{3}{2(1-2r)}, $$ then $C$ contains the vertices of a similar copy of $\mathcal{T}$. 
\end{theorem}

In order to guarantee the existence of countably many $3$-term arithmetic progressions, we would need to apply Theorem \ref{thm: conv combo reduced thickness} at every level of the construction of $\{S_I\}$; i.e., for all words $I$ there exists at least two distinct $\ell$ values such that
\begin{equation*}\label{ineq:h_I < dist(S's)}
    \max_{x\in S_{I,\ell}}\dist(x,C) <  \min_{\substack{1 \leq i,j\leq k_I\\i\neq j}}\dist (S_{I,i},S_{I,j}).
\end{equation*}


\bigskip
\bibliographystyle{plain} 

\bibliography{bib}

\end{document}